\def\Var{{\rm Var}}
\def\Cov{{\rm Cov}}
\def\N-IGP{{\hbox{\rm N-IGP}}}
\def\NGGP{{\rm NGGP}}
\def\GDP{{\rm GDP}}
\def\PDP{{\rm PDP}}
\numberwithin{equation}{section}
\theoremstyle{plain}
\newtheorem{thm}{Theorem}[section]
\newtheorem{theorem}[thm]{Theorem}
\newtheorem{corollary}[thm]{Corollary}
\newtheorem{remark}[thm]{Remark}
\newtheorem{assumption}[thm]{Assumption}
\newtheorem{example}[thm]{Example}
\newtheorem{definition}[thm]{Definition}
\newtheorem{prop}[thm]{Proposition}
\newcommand{\EE}{\mathbb{E}}
\newcommand{\cB}{\mathcal{B}}
\newcommand{\cF}{\mathcal{F}}
\newcommand{\RR}{\mathbb{R} }
\newcommand{\be}{\beta}
\newcommand{\XX}{\mathbb X}
\newcommand{\cX}{\mathcal X}
\newcommand{\al}{\alpha}
\newcommand{\om}{\omega}
\newcommand{\si}{\sigma}
\let\Section=\section
\def\section{\setcounter{equation}{0}\Section}
\def\RR{\mathbb{R} }
\def\NN{\mathbb{N}}
\def\EE{\mathbb{E}}
\def\cB{{\mathcal B}}
\def\cC{{\mathcal C}}
\def\cF{{\mathcal F}}
\def\fin{{\hfill $\Box$}}
\def\de{{\delta}}
\def\si{{\sigma}}
\def\Om{{\Omega}}
\def\Ga{{\Gamma}}
\def\th{{\theta}}
\def\Om{{\Omega}}
\def\NN{\mathbb N}
\def \RR{\mathbb R}
\def\DD{\mathbb D}
\def\EE{\mathbb E\ }
\def\QQ{\mathbb Q }
\def\cB{{\mathcal B}}
\def\cC{{\mathcal C}}
\def\cF{{\mathcal F}}
\def\fin{{\hfill $\Box$}}
\def\be{{\beta}}
\def\de{{\delta}}
\def\si{{\sigma}}
\def\Om{{\Omega}}
\def\al{{\alpha}}
\def\be{{\beta}}
\def\Ga{{\Gamma}}
\def\de{{\delta}}
\def\si{{\sigma}}
\def\ZZ{{\mathbb Z}}
\def\PP{{\mathbb P}}
\def\al{\alpha}
\def\Ad2{{\| A-\widetilde A \|_{L^{2}_{loc}} }}
\def\om{{\omega}}
\def\Om{{\Omega}}
\def\cB{{\mathcal B}}
\def\rX{{\mathcal X}}
\def\DD{{\mathbb D}}
\def\th{{\theta}}
\def\Beta{{\rm Beta}}
\def\DP{{\rm DP}}
\def\PDP{{\rm PDP}}
\def\Dir{{\rm Dir}}
\def\DPG{{\rm DPG}}
\begin{document}

\begin{frontmatter}
\title{Functional central limit theorems for stick-breaking priors}
\runtitle{CLT  for Stick-breaking Priors}

\begin{aug}
\author{\fnms{Yaozhong} \snm{Hu} \thanks{Department of Mathematical and Statistical  Sciences, University of Alberta at Edmonton, Alberta,  Canada,    T6G 2G1. Email: yaozhong@ualberta.ca}} 
\and 
\author{\fnms{Junxi} \snm{Zhang}\thanks{Department of Mathematical and Statistical  Sciences, University of Alberta at Edmonton, Alberta,  Canada,    T6G 2G1. Email: junxi3@ualberta.ca}}

\runauthor{Y. Hu and J.  Zhang}

%

\thankstext{t2}{Supported by an NSERC discovery fund and a startup fund of University of Alberta.}

\end{aug}

\begin{abstract}
We obtain the empirical strong law of large numbers, empirical Glivenko-Cantelli theorem, central limit theorem, 
functional central limit theorem for various nonparametric Bayesian priors
which  include the Dirichlet process with general stick-breaking weights,      
the Poisson-Dirichlet process,   the  normalized inverse Gaussian 
process,   the normalized generalized gamma 
process, and     the  generalized Dirichlet process.  
For the Dirichlet process with general stick-breaking weights, we introduce two general conditions such that the central limit theorem and 
functional central limit theorem hold. 
Except in the case of the 
generalized Dirichlet process,  since the finite dimensional 
distributions of these processes are either hard to obtain or are 
complicated to use even they are available,  
we use the      method of moments 
to obtain the convergence results.   
For the generalized Dirichlet process we use  its finite dimensional marginal distributions   to obtain the asymptotics  although 
the computations are highly  technical. 
\end{abstract}
%

\begin{keyword} 
\kwd{Nonparametric Bayesian priors} 
\kwd{stick-breaking process} 
\kwd{Dirichlet process}
\kwd{Dirichlet process with general stick-breaking weights}
\kwd{two-parameter Poisson-Dirichlet process}
\kwd{normalized inverse Gaussian process}
\kwd{normalized generalized Gamma process}
\kwd{generalized Dirichlet process} 
\kwd{strong law of large numbers}
\kwd{central limit theorem}
\kwd{functional central limit theorem}
\kwd{Delta method, method of moments}
\end{keyword}
\end{frontmatter}

\section{Introduction}
Ever since the work of    \citet{ferguson1973}   the   Dirichlet process     has  become   a critical tool in Bayesian nonparametric statistics  and has  found  applications in various areas, including 
machine learning, biological science, social science and so on.
One of the  important features of the Dirichlet process is that when the prior is a Dirichlet process   its  posterior is also a Dirichlet process  (see e.g. \citet{ferguson1973}).  This makes the complex computation in the Bayesian nonparametric analysis  possible and enables the Dirichlet process 
to become a backbone of the  Bayesian nonparametric statistics. 

To widen the applicability of the Bayesian nonparametric statistics, 
researchers have tried to extend the concept of Dirichlet process. One of these efforts is the introduction of   the stick-breaking process. 
The first breakthrough along this path 
is due to  \citet{sethuraman1994}  who shows  that  the Dirichlet process admits the stick-breaking representation
(see \eqref{1.1}-\eqref{1.111} in the next section), where the stick-breaking weights are independent and identically distributed 
(iid)  random variables satisfying the beta distribution
$\Beta(1, a)$ (throughout this paper the notation 
$ \Beta(\al,\be)$ denotes the beta distribution whose density is 
$g(x; \al,\be)=\frac{\Ga(\al)\Ga(\be)}{\Ga(\al+\be)}x^{\al-1} (1-x)^{\be-1}\,, \ 0<x< 1$).  Within this stick-breaking representation, we can extend 
the class of Dirichlet processes to many other   priors  by assuming that the stick-breaking weights are iid with other distributions; 
  satisfy some other kinds of dependence;   or satisfy some specific (joint) distributions.  Among various such extensions, let us mention 
  the following works which we shall deal with
  in this paper. \citet{perman1992} obtain a general formulae for sized-biased sampling from a Poisson point process where the size of a point is defined by an arbitrary strictly positive function.  From this  formulae, they identify 
   the stick-breaking representation of the two-parameter Poisson-Dirichlet process,  
  which admits a stick-breaking process with the stick-breaking weights $v_i \overset{ind}{\sim} \Beta(1-b,a+ib)$,  where  
  $b>0$, $a>-b$  and $i=1, 2, \cdots$.  
  \citet{favaro2012}  introduce 
the normalized inverse Gaussian process through  its stick-breaking 
representation by identifying the  explicit 
finite dimensional joint density functions
of its stick-breaking weights.  
\citet{favaro2016} present  the stick-breaking representation of a more general class of random measures called homogeneous normalized random measures with independent increments (NRMIs),   which include  the normalized generalized gamma process and the generalized Dirichlet process,    two widely used priors in Bayesian nonparametric statistics.

%

Strong law of large numbers,  central limit theorem 
and functional central limit theorem have   always been 
ones of the central topics in statistics and in probability theory. 
Without exception the  
  asymptotic behaviours  of the Dirichlet process and 
other Bayesian nonparametric priors
play  an important role  in the  Bayesian nonparametric 
analysis, for example in  the construction of asymptotic Bayesian confidence intervals, regression analysis and functional estimations. 
Compared to the vast literature in the  field of 
  parametric statistics relevant to   these issues
 the achievements in the field of Bayesian nonparametrics  
 are quite limited.  However, let us mention the following works
 pioneered this paper. 
\citet{sethuraman1982} discuss   the weak convergences 
of the Dirichlet measure  $P$ when its parameter measure 
(i.e the measure $aH$ in this paper) approaches to a non-zero measure or a zero measure respectively. \citet{lo1983} studies  the central limit theorem 
 of the posterior distribution of Dirichlet process which  is analogous to 
 our central limit theorem for the  Dirichlet process.  
  Based on this result, 
 \citet{lo1987}    obtains 
     the asymptotic confidence  bounds and establishes 
        the asymptotic validity of the Bayesian bootstrap method. 
        The above mentioned Lo's results are  extended to the mixtures of Dirichlet process by \citet{brunner1996}.   
\citet{james2008} reveals the consistency behaviour (the posterior distribution converges to the true distribution weakly) and the functional central limit theorem  
for the posterior distribution of the two-parameter Poisson-Dirichlet process (with fixed  $a$ and when the   sample size goes to infinity).   \citet{kim2004} show  that the Bernstein-von Mises theorem holds in survival models for the Dirichlet process, Beta process and Gamma process. \citet{dawson2006} establish 
 the large deviation principle
 for the Poisson-Dirichlet distribution and give the explicit rate functions when the parameter $a$ (which represents the mutation rate in the context of population genetics) approaches infinity. \citet{labadi2013} present the functional central limit theorem  for the normalized inverse Gaussian process on $D(\RR)$ when its parameter $a$ is large by using  its finite dimensional joint  density. 
 \citet{labadi2016}  obtain   the functional central limit theorem for the Dirichlet process on $D(\RR)$ when the parameter $a$ is large by using the finite dimensional joint density of Dirichlet process.

%
From the above mentioned works we see that 
there are only very limited results on the asymptotics of the stick-breaking 
processes.  { Relevant  to   the asymptotics  as  $a\rightarrow \infty$, 
 there have been established the  central limit  theorem 
 and functional central limit theorem   only for two 
processes:     the 
Dirichlet process and the  normalized inverse Gaussian  process.  }
The reason for the above limitation is   that 
the most commonly used technique 
appeals to the explicit forms of   the finite dimensional densities of the process itself.
 This method is effective  
only when the finite dimensional distributions have explicit forms and are 
possible  to handle.  
It cannot be applied to study  other processes  when the   explicit forms for the
finite dimensional marginal densities of the process itself are unavailable  
or they are  too complex to analyse even though they are available.  
 
This paper is to introduce the method of  moments into 
 this study and to provide 
 a systematic  study of the 
  asymptotics  as $a\rightarrow \infty$ 
for various   stick-breaking processes 
depending on a parameter $a>0$. We are  mainly 
 concerned  with three types of the asymptotics 
(strong law  of large numbers, central limit theorem, and functional central limit theorem)
for a number of processes, which  include  
the Dirichlet process  with general stick-breaking weights,  the   classical Dirichlet process  $\DP(a,H)$ (see e.g. \citet{ferguson1973}, \citet{ghosalbook2017}, \citet{regazzini2002}),   
the   two-parameter Poisson-Dirichlet process $\PDP(a,b,H)$ (also known as Pitman-Yor process,  \citet{pitman1997}),  the normalized inverse Gaussian process $\N-IGP(a,H)$  (see \citet{lijoi2005b}), the normalized generalized gamma process $\NGGP(\sigma,a,H)$    
(see \citet{brix1999}), 
and the generalized Dirichlet process $\GDP(a,r,H)$  (see \citet{lijoi2005a}).
   However, we will not concern with the large sample problem in this work.

For the generalized Dirichlet process
since the finite dimensional marginal distributions
of the process itself 
are  available we shall use them  to obtain the asymptotics directly
although  the computations are very technical. Let us point out that 
this process also admits a stick-breaking representation. However, 
it seems to us that it is more complex to use the method of moments 
than to use the   finite dimensional marginal distributions
of the process itself.   

Let us point out the following points of the paper about the well-known
Bayesian nonparametric priors.  
\begin{enumerate}
\item[(1)](for Dirichlet process) 
 Both the finite dimensional distributions of the 
stick-breaking weights and the  process itself are explicit and are 
easy to handle. 
Prior to this work the central limit theorem  and the functional central limit theorem
have been established  for this process by using the finite dimensional
distributions  of the process itself. 

For the Dirichlet process the stick-breaking weights $\{v_i\}$ are iid 
and follow the Beta distribution $\Beta(1, \alpha)$. We introduce the
concept of Dirichlet process with general  stick-breaking weights, where we still require the stick-breaking weights $\{v_i\}$  to be  iid 
but the distribution $g_a$ 
 they follow can be arbitrary. In this case there is no way to 
obtain the explicit form of the joint distributions of the process itself. 
We  use the method of moments  to establish  the central limit theorem and 
the functional central limit theorem for this process with very general
  distribution $g_a$.  For example, $g_a$ can be a Beta
distribution $\Beta(\rho_a, a)$, where $\rho_a$ is a function of $a$  
 such that $\rho_a/a\rightarrow 0$ as $a\rightarrow \infty$, which may have potential applications in    posterior Dirichlet process.   
In this case 
the joint distributions of the process itself is  unavailable
except in the case $\rho_a=1$,  i.e. in the case of Dirichlet process.

\item[(ii)](for the normalized inverse Gaussian process and for the generalized Dirichlet process) Both the finite dimensional distributions of the 
stick-breaking weights and that of the  process itself are explicit.
Prior to this work the central limit theorem and 
the functional central limit theorem
have been established  only for  the normalized inverse Gaussian process  
by using the finite dimensional
distributions of the process itself. We shall also use the 
 finite dimensional
distributions of the process itself to obtain the central limit theorem and 
the functional central limit theorem
   for the generalized Dirichlet process.  
We shall use the method of  moments to re-derive  the  central limit theorem and 
the functional central limit theorem
   for  the normalized inverse Gaussian process,
providing an alternative tool for this process.  

\item[(iii)](for the Poisson-Dirichlet process 
and  the normalized generalized gamma process)  
The finite dimensional distributions   of the 
stick-breaking weights are available but 
 the finite dimensional distributions of the  process itself are 
 not available. We   use the method of  moments to obtain the  central limit theorem and 
the functional central limit theorem for these processes. 
%
\end{enumerate}
 
Now we explain the organization of this paper.
 In Section 2 we   recall  some commonly used   stick-breaking processes, including 
the   classical Dirichlet process  $\DP(a,H)$,   
the   two-parameter Poisson-Dirichlet process $\PDP(a,b,H)$,  the normalized inverse 
Gaussian process $\N-IGP(a,H)$, the normalized generalized gamma process $\NGGP(\sigma,a,H)$, 
and the generalized Dirichlet process $\GDP(a,r,H)$. We will also introduce the Dirichlet process with general iid stick-breaking weights. Interested readers are
referred  to \citet{huzhang}   and references therein   for a 
recent survey of some of these processes  and their applications.  In this section 
we  also recall the concepts  of  the weak convergence with respect to  the Skorohod topology on $D(\RR^d)$.
     Although the method of the moments can
be used to obtain the strong law  of large numbers, central limit theorem, 
and the functional central limit theorem, the computations are still 
very sophisticated  and for different processes the computations of the moments
are  different. For this reason, we present  our moment results 
for  various stick-breaking processes,  including $\DPG(g,H)$, $\PDP(a,b,H)$, $\N-IGP(a,H)$, and $\NGGP(\sigma, a, H)$,  $\GDP(a,r, H)$,  separately in Section \ref{s.moment}.  We point out that we use finite dimensional distribution   method to obtain the moment results for  $\GDP(a,r, H)$, while we compute the moments directly through their stick-breaking weights for other  processes. 
   In Section \ref{s.main},
we state the strong law of large numbers, central  limit theorem,
and functional central limit theorem. 
The Dirichlet processes with general stick-breaking weights are new 
and we allow the stick-breaking weights to be some very general iid random variables  defined 
on $(0, 1)$.  With different choices of the stick-breaking weights we can obtain
various known stick-breaking processes.  
Because of this generality of the stick-breaking weights
we state one theorem on the central limit theorem and functional central limit
theorem for this type of  processes. 
We state a similar theorem for all  other 
processes 
(the Poisson-Dirichlet process $\PDP(a,b,H)$, the  
normalized inverse Gaussian process $\N-IGP(a,H)$, 
the normalized generalized gamma process $\NGGP(\sigma,a,H)$, 
   and the generalized Dirichlet process $\GDP(a,r,H)$).  The details of the proofs will be provided in a supplementary file. 

Finally,  let us emphasize that  all the processes we dealt with in this  
paper are actually   ``{\it random probability measures}".  However, 
we follow the convention in the literature to continue to call them 
``{\it processes}".


\section{Preliminary Notations}\label{s.pre}
\subsection{Definitions}
 Let $(\Om, \cF, \PP)$ be a complete probability 
 space and let $(\mathbb{X},\mathcal{X})$ be a measurable Polish space, namely,
$\mathbb{X}$ is a   separable complete metric space and $\mathcal{X}$ is the 
Borel $\sigma$-algebra of $\mathbb{X}$.  Let $H$ be a  
nonatomic probability  measure on  $(\mathbb{X},\mathcal{X})$    
(i.e.  $H(\{x\})=0$ for 
any  $x \in \mathbb{X}$).   
A random  measure  
is a 
mapping  $P$ from $\Om\times \rX$ to $\RR_+$ 
(we denote  this random measure by 
$P=(P(\om, A)$, $\om\in \Om\,, A\in  \rX )$)  such that
\begin{enumerate}
\item[(i)] when $\om\in \Om$ is fixed, $P(\om, \cdot)$ is a   measure 
on $(\XX, \rX)$; 
\item[(ii)]  when $A\in \rX$ is fixed, $P(\cdot, A)$ is a random
variable on $(\Om, \cF, \PP)$. 
\end{enumerate} 
Now we give the definition of the stick-breaking process 
(more appropriately a stick-breaking random probability measure). 
\begin{definition}\label{d.2.1}
A random measure $P =(P(\om, A),\om\in \Om, A\in \cX ) $ is said to be a \textit{stick-breaking process} with the \textit{base measure} $H$, if it has the following representation: 
\begin{empheq}[left=\empheqlbrace]{align} 
 &P=\sum_{i=1}^{\infty}w_i\delta_{\theta_i},\qquad \hbox{where} \label{1.1}\\
& w_1=v_1, \quad w_i=v_i\prod_{j=1}^{i-1}(1-v_j) \quad \text{for} \quad i= 2,3, \cdots,  \label{1.111}
\end{empheq}
where $\theta_i, i=1, 2, \cdots $ are iid random variables  defined on $(\Om, \cF, \PP)$
(Throughout the paper all random variables are defined on the probability space $(\Om, \cF, \PP)$) 
with values in $(\XX, \cX)$ such that for each $i$, the law of $\theta_i$ is 
$H$;      $\delta_{\theta_i} $ denotes the Dirac measure on 
$(\XX, \cX)$, this means $\de_{\th_i}(A)=
1$ if $\th_i\in A$ and $\de_{\th_i}(A)=
0$ if $\th_i\not\in A$ for any $A\in \cX$; and $v_i, i=1, 2, \cdots  $ are random variables
with values in $[0, 1]$, independent of $\{\th_i\}$, which are   called the \textit{stick-breaking weights}. 
\end{definition}
  
\begin{remark}
To make sure that $P$ is well-defined (namely,  
\eqref{1.1} is convergent),
one needs  to impose the condition that $\sum_{i=1}^{\infty}w_i=1$ almost surely,   which is  equivalent to the condition  
that  $\sum_{i=1}^{\infty}\mathbb{E}\left[\log(1-v_i)\right]=-\infty$ (e.g. \citet{ishwaran2001}).  
\end{remark} 

Since we assume that  $\{\th_i\}$ are iid and follow the distribution $H$,  if $H$ is given and fixed,   then the 
stick-breaking process $P$ depends only on the choice of $\{v_i\}$.
 The first milestone work  on the stick-breaking process is \citet{sethuraman1994}  where  it was  shown  that  the Dirichlet process admits the stick-breaking representation  \eqref{1.1}-\eqref{1.111}  with the stick-breaking weights of the form $v_i \overset{iid}{\sim} \Beta(1,a)$. 
We can use this characteristic  
 as the definition of the Dirichlet process.

\begin{definition}\label{d.2.3} Let $a>0$ and let $H$ be a nonatomic 
measure on $(\XX, \cX)$.  A random probability measure $P$ is called the Dirichlet 
process with parameter $(a,H)$, denoted by $P\sim \DP(a, H)$,
  if it has the representation \eqref{1.1}-\eqref{1.111}, where 
$v_i \overset{iid}{\sim} \Beta(1,a)$. 
\end{definition} 

\begin{remark} Throughout the entire paper, we shall assume that $a $
is a positive real number and $H$ is a nonatomic measure 
on $(\XX, \cX)$ unless otherwise specified.
\end{remark} 

With this definition we can write the original definition of the Dirichlet 
process as a proposition. To state this proposition, 
 we need  to recall the concept of the Dirichlet distribution. 
 Throughout this paper we use the following notation 
 to denote the standard     simplex in $\RR^n$:  
\begin{equation}
 \mathbb{S}_n=\left\{(s_1,\cdots, s_n)\in \RR^n:\quad s_i \geq 0,\quad \sum_{i=1}^ns_i=1\right\}\,.  \label{e.2.3a}
\end{equation} 
In case of no ambiguity we also write $ \mathbb{S}= \mathbb{S}_n$. 
  A random vector $(X_1, \cdots, X_n) \in \mathbb{S}$ is called to follow the  Dirichlet 
distribution with parameters $(\alpha_1,\cdots, \alpha_n)\in [0, \infty)^n$,
denoted by $(X_1,\cdots,X_n) \sim \Dir(\alpha_1, \cdots,\alpha_n)$, if the joint pdf of  $(X_1, \cdots,\\ X_n)$ is given by 
\begin{align*}
f(x_1,\cdots,x_{n})&=\frac{\Gamma(|\alpha|)}{\prod_{i=1}^n \Gamma(\alpha_i)}\prod_{i=1}^{n}x_i^{\alpha_i-1}\mathbbm{1}_{\mathbb{S}}(x_1,\cdots,x_n)\,,
\end{align*}
where $|\alpha|=\sum_{i=1}^n \alpha_i$, $\Gamma(a)=\int_0^\infty x^{a-1} dx \  (a>0)$,   is the gamma function, and $\mathbbm{1}_{\mathbb{S}}$ is the indicator function of the simplex $\mathbb{S}$.  
With this notion of Dirichlet distribution we can 
write the following proposition.   

\begin{prop}  \label{p.2.4} 
  A random probability measure $P$ is   the Dirichlet 
process with parameter $(a,H)$ if for any measurable partition $(A_1, \cdots, A_n)$ of  $\XX$
(i.e. $A_1, \cdots, A_n\in \mathcal{X}$, $A_1\cup \cdots \cup A_d=\XX$
and $A_i\cap A_j=\emptyset$ for $1\le i<j\le n$),
the random vector $(P(A_1), \cdots, P(A_n))$ follows the  Dirichlet  distribution with parameters 
$(a H(A_1), \cdots, a H(A_n))$. 
\end{prop}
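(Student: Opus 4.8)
The statement is Sethuraman's characterisation, and the plan is to prove the substantive direction: that the stick-breaking construction of Definition~\ref{d.2.3} produces a random probability measure whose evaluation on any measurable partition follows the stated Dirichlet law. (The converse — that such a property determines the law of a random probability measure on $(\XX,\cX)$, via Kolmogorov consistency over refinements of partitions — is classical, and I would only remark on it.) Let $P\sim\DP(a,H)$ and split off the first stick: $P=v_1\delta_{\theta_1}+(1-v_1)P'$, where $P':=\sum_{i\ge1}w_i'\delta_{\theta_{i+1}}$ with $w_1'=v_2$, $w_i'=v_{i+1}\prod_{j=2}^{i}(1-v_j)$. Since $(v_{i+1})_{i\ge1}\overset{iid}{\sim}\Beta(1,a)$ and $(\theta_{i+1})_{i\ge1}\overset{iid}{\sim}H$, one has $P'\stackrel{d}{=}P$ with $(v_1,\theta_1)\perp P'$. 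Fix a measurable partition $(A_1,\dots,A_n)$ of $\XX$, put $\alpha_j:=aH(A_j)$ so that $|\alpha|=\sum_j\alpha_j=a$, and set $Y:=(P(A_1),\dots,P(A_n))$, $Y':=(P'(A_1),\dots,P'(A_n))$. As the $A_j$ partition $\XX$, exactly one indicator $\ind{\theta_1\in A_j}$ is nonzero; letting $\xi$ be the block with $\theta_1\in A_\xi$ (so $\PP(\xi=j)=\alpha_j/|\alpha|$), evaluation of the split on the $A_j$'s yields the distributional fixed-point equation
\[
 Y\ \stackrel{d}{=}\ v_1 e_\xi+(1-v_1)Y',\qquad v_1\sim\Beta(1,|\alpha|),\ \ (v_1,\xi)\perp Y'\stackrel{d}{=}Y,
\]
where $e_1,\dots,e_n$ denote the vertices of the simplex $\mathbb S_n$.

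\textbf{Step 2: the Dirichlet law solves the same equation.} I would verify the one-step mixing identity: if $X\sim\Dir(\alpha_1,\dots,\alpha_n)$, $B\sim\Beta(1,|\alpha|)$, $\PP(\xi=j)=\alpha_j/|\alpha|$, and $(B,\xi)\perp X$, then $Be_\xi+(1-B)X\sim\Dir(\alpha_1,\dots,\alpha_n)$. Conditionally on $\xi=k$, the gamma representation $X_j=G_j/S$ ($G_i\overset{ind}{\sim}\mathrm{Gamma}(\alpha_i,1)$, $S=\sum_iG_i$) together with $B=E/(E+S)$, $E\sim\mathrm{Gamma}(1,1)$ independent, gives $Be_k+(1-B)X=(E+S)^{-1}(G_1,\dots,E+G_k,\dots,G_n)\sim\Dir(\alpha+e_k)$; and mixing $\Dir(\alpha+e_k)$ over $k$ with weights $\alpha_k/|\alpha|$ recovers $\Dir(\alpha)$ by the size-biasing identity $\Dir(\alpha)=\sum_k\frac{\alpha_k}{|\alpha|}\Dir(\alpha+e_k)$ (choose an index $J$ with $\PP(J=k\mid X)=X_k$: marginally $\PP(J=k)=\alpha_k/|\alpha|$, while $X\mid\{J=k\}$ has density $\propto x_k\prod_i x_i^{\alpha_i-1}$). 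Hence $\mu_\star:=\Dir(aH(A_1),\dots,aH(A_n))$ is a fixed point of the affine-mixing map $T\colon\mu\mapsto\mathrm{Law}\big(v_1e_\xi+(1-v_1)Z\big)$, $Z\sim\mu$, $(v_1,\xi)\perp Z$.

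\textbf{Step 3: uniqueness and conclusion.} On the space $\mathcal P(\mathbb S_n)$ of Borel probability measures on the compact simplex equipped with the $1$-Wasserstein distance $W_1$ (a complete metric space), $T$ is a strict contraction: coupling $Z\sim\mu$, $Z'\sim\nu$ optimally and reusing the same $(v_1,\xi)$, one gets $\|v_1e_\xi+(1-v_1)Z-(v_1e_\xi+(1-v_1)Z')\|=(1-v_1)\|Z-Z'\|$, hence $W_1(T\mu,T\nu)\le\E[1-v_1]\,W_1(\mu,\nu)=\tfrac{a}{1+a}W_1(\mu,\nu)$ with $\tfrac{a}{1+a}<1$. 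By Banach's fixed-point theorem $T$ has a unique fixed point, so from Steps 1--2 we conclude $\mathrm{Law}(Y)=\mu_\star$, which is exactly the claim. As an alternative to the contraction, one can iterate the identity of Step 1 at the level of joint moments: using $\E[v_1^{r}(1-v_1)^{s}]=a\,B(r+1,a+s)$ (with $B$ the Beta function), expansion of $v_1e_\xi+(1-v_1)Y'$ produces a triangular recursion expressing $\E\big[\prod_jY_j^{k_j}\big]$ in terms of strictly lower-order moments, and induction on $\sum_jk_j$ identifies it with $\prod_j(\alpha_j)_{k_j}\big/(|\alpha|)_{\sum_jk_j}$ (rising factorials), the corresponding moment of $\mu_\star$; since $Y$ lives on the compact $\mathbb S_n$, its law is determined by these moments.

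\textbf{Main obstacle.} The crux is Step 2 — that one stick-breaking step maps the Dirichlet family to itself — which is precisely where the specific weights $v_i\sim\Beta(1,a)$ are used, and which rests on the gamma representation of the Dirichlet distribution and the size-biasing identity. The contraction estimate in Step 3 is routine, and the moment variant, though elementary, requires careful bookkeeping of how the random index $\xi$ is distributed over the blocks of the partition.
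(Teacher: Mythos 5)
Your proof is correct, and it is essentially the argument the paper defers to: the paper gives no proof of its own but cites \citet{sethuraman1994}, and your Steps 1--3 (the distributional fixed-point equation from peeling off the first stick, the one-step invariance of the Dirichlet law via the gamma representation and size-biasing, and uniqueness by contraction) are precisely Sethuraman's self-similarity argument in its standard modern form. The only point worth flagging is the degenerate case $H(A_j)=0$, where the density formula for $\Dir(\alpha)$ breaks down and the corresponding coordinate must be handled as an atom at zero; this is a routine caveat that neither you nor the paper addresses explicitly.
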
 

\begin{proof}  We refer to   \citet{sethuraman1994} or 
\citet{huzhang} for the proof of the equivalence between  
Definition \ref{d.2.3} and Proposition \ref{p.2.4}. 
\end{proof} 

In the literature   Proposition \ref{p.2.4} is usually  taken as the definition of the 
Dirichlet process. The reason that
we use the  stick-breaking representation as its definition  is for the consistency purpose  since most processes studied in this paper could be defined through the
stick-breaking representation \eqref{1.1}-\eqref{1.111}.

In the following definitions we  shall always assume   that 
 $P$ is  a random probability measure admitting  the representation 
\eqref{1.1}-\eqref{1.111}.   If we drop the specific distribution 
satisfied by $v_i$ in the definition \ref{d.2.3},  our  limiting results
below still hold. For this reason and for the potential applications in   practice 
 we introduce the concept of {\it  Dirichlet process with general stick-breaking weights}. 
\begin{definition}\label{d.2.5} $P$ is called the Dirichlet process with general stick-breaking weights,
denoted by $P\sim \DPG(g, H)$,
  if the stick-breaking weights  
$\{v_1, v_2, \cdots   \}$ in \eqref{1.1}-\eqref{1.111} are iid and follow 
 a general probability distribution
$g (x), 0<x<1$. 
\end{definition} 
Now we recall some other well-known processes studied in the literature, 
which are the subjects of this paper. 
\begin{definition}[\citet{pitman1997}]\label{d.2.6}   
Let     $b \in (0,1)$    and   let  $ -b<a<\infty$.   $P$ is called the  
two-parameter Poisson-Dirichlet process or
the  Pitman-Yor process, denoted by $\PDP(a, b, H)$,
  if the stick-breaking weights  satisfy the following:
\begin{equation}
\begin{cases}
\hbox{$v_1, v_2, \cdots$ are  independent}\,, \\
v_i \  {\sim}\ 
\text{\rm Beta}(1-b,a+ib), \quad  i=1, 2, \cdots  
\end{cases}\label{e.4.1} 
\end{equation} 
\end{definition}
\begin{definition}
  [\citet{favaro2012}] \label{d.2.7}  $P$ is called the normalized inverse Gaussian process with parameters $a$ and $H$, denoted by  $P \sim \hbox{\rm N-IGP}(a,H)$,   
if the joint distributions of the 
stick-breaking weights  $\{v_1, v_2, \cdots\}$ 
 are given through the following conditional probability densities  recursively:  
\begin{empheq}[left=\empheqlbrace]{align} 
& f_{v_1}(x) =\frac{a^{\frac{1}{2}}x^{-\frac{1}{2}}(1-x)^{-1}}{(2\pi)^{\frac{1}{2}}K_{-{\frac{1}{2}}}(a)}K_{-1}\left(\frac{a}{\sqrt{1-x}}\right),\label{5.1}\\
& f_{v_n|v_1,\cdots,v_{n-1}}(x) =\frac{a^{\frac{1}{2}}\prod_{i=1}^{n-1}(1-v_i)^{-\frac{1}{4}}x^{-\frac{1}{2}}(1-x)^{-\frac{5}{4}+\frac{n}{4}}}{(2\pi)^{\frac{1}{2}}K_{-{\frac{n}{2}}}\left(\frac{a}{\sqrt{\prod_{i=1}^{n-1}(1-v_i)}}\right)}   \nonumber\\
&\qquad\qquad\qquad\qquad\qquad  \times K_{-\frac{1}{2}-\frac{n}{2}}\left(\frac{a}{\sqrt{(1-x)\prod_{i=1}^{n-1}(1-v_i)}}\right),\nonumber\\
&\qquad\qquad\qquad\qquad\qquad\qquad  n=2, 3,   \cdots\label{5.2}
\end{empheq}
where $a>0$ and $K_\mu 
$ is the modified Bessel function of the third type 
(see e.g.  \citet{gradshteyn2014}).
\end{definition}
Similar to what  we did for  the  Dirichlet process, we present the original definition of the normalized inverse Gaussian process as a proposition.

\begin{prop}  \label{p.2.7} 
  A random probability measure $P$ is   the normalized inverse Gaussian process with parameter $(a,H)$ if for any measurable partition $(A_1, \cdots, A_n)$ of  $\XX$,
the random vector $(P(A_1), \cdots, P(A_n))$ follows the  normalized inverse Gaussian  distribution with parameters 
$(a H(A_1), \cdots, a H(A_n))$ given by  following form:  
\begin{align*}
f(x_1,\cdots,x_n)=& \frac{e^aa^n\prod_{i=1}^nH(A_i)}{2^{\frac{n}{2}-1}\pi^{\frac{n}{2}}}\times K_{-\frac{n}{2}}\left(\sqrt{\sum_{i=1}^n\frac{\left(aH(A_i)\right)^2}{x_i}}\right)\\
&\times \left(\sum_{i=1}^n\frac{\left(aH(A_i)\right)^2}{x_i}\right)^{-\frac{n}{4}}
\times \prod_{i=1}^nx_i^{-\frac{3}{2}}\times \mathbbm{1}_{\mathbb{S}}(x_1,\cdots,x_n)\,,
\end{align*}
where $\mathbb{S}$ is the simplex defined by \eqref{e.2.3a}. 
\end{prop}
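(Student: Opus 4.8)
The plan is to produce one explicit random probability measure that simultaneously satisfies Definition~\ref{d.2.7} and the finite‑dimensional description in the statement, and then to observe that each of these two properties already pins down the law of a random probability measure, so that they are equivalent. Concretely, I would start from the inverse Gaussian completely random measure $\mu=\sum_{k\ge1}J_k\,\delta_{\vartheta_k}$ on $(\XX,\cX)$ with Lévy intensity $\rho(x)\,dx\,H(d\vartheta)$, where $\rho(x)=\frac{a}{\sqrt{2\pi}}\,x^{-3/2}e^{-x/2}$, and set $T:=\mu(\XX)$, $P:=\mu/T$. A Laplace‑transform computation shows that $T$ has an inverse Gaussian law (in particular $0<T<\infty$ a.s., so $P$ is a bona fide random probability measure) and that, for any measurable partition $(A_1,\dots,A_n)$ of $\XX$, the masses $\mu(A_1),\dots,\mu(A_n)$ are independent inverse Gaussian with shape parameters $aH(A_1),\dots,aH(A_n)$; moreover, since the intensity is a product measure, the locations attached to the jumps are iid with law $H$.

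\textbf{Step 1: the finite-dimensional distributions.} I would derive the stated density of $(P(A_1),\dots,P(A_n))=(\mu(A_1)/T,\dots,\mu(A_n)/T)$ by applying the change of variables $(\mu(A_1),\dots,\mu(A_n))\mapsto(P(A_1),\dots,P(A_{n-1}),T)$ to the product of the $n$ inverse Gaussian densities and then integrating out $T$ by means of the classical integral $\int_0^\infty t^{\nu-1}e^{-\frac12(B/t+t)}\,dt=2\,B^{\nu/2}K_\nu(\sqrt B)$. With $\nu=-n/2$ and $B=\sum_i (aH(A_i))^2/x_i$ this is exactly what generates the factor $K_{-n/2}$, the power $B^{-n/4}$, the product $\prod_i x_i^{-3/2}$, and the constant $e^a a^n\prod_i H(A_i)/(2^{n/2-1}\pi^{n/2})$ in the statement.

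\textbf{Step 2: the stick-breaking weights.} It remains to verify that $P=\mu/T$ has the representation \eqref{1.1}--\eqref{1.111} with stick-breaking weights whose joint law is given by \eqref{5.1}--\eqref{5.2}. Ordering the jumps of $\mu$ by size-biased permutation, $(\tilde J_1,\tilde J_2,\dots)$, and setting $v_i:=\tilde J_i/(T-\sum_{j<i}\tilde J_j)$ produces such a representation; the locations, being iid $H$ and independent of the sizes, play the role of the $\theta_i$. A repeated application of the Palm (Mecke) formula for the Poisson process underlying $\mu$ gives the size-biased sampling identity of \citet{perman1992},
\[
\PP\bigl(\tilde J_1\in dx_1,\dots,\tilde J_n\in dx_n,\ T\in dt\bigr)
=\Bigl(\prod_{i=1}^{n}\frac{x_i\,\rho(x_i)}{\,t-\sum_{j<i}x_j\,}\,dx_i\Bigr)\,f_T\Bigl(t-\textstyle\sum_{i=1}^{n}x_i\Bigr)\,dt,
\]
where $f_T$ is the inverse Gaussian density of $T$. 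Changing variables to $(v_1,\dots,v_n,R_n)$ with $R_n:=T-\sum_{i\le n}\tilde J_i$, integrating out the residual mass $R_n$ with the same Bessel integral as in Step~1, and inserting the explicit $\rho$ and $f_T$, one extracts the conditional density of $v_n$ given $v_1,\dots,v_{n-1}$ and recovers \eqref{5.1}--\eqref{5.2}; the Bessel functions of orders $-n/2$ and $-(n+1)/2$ appearing there are precisely the output of the $R_n$-integration. Since the law of $(v_1,v_2,\dots)$ determines the law of $P$ as a random probability measure (as does the family of partition-vectors, because finite measurable partitions form a determining class), Steps~1 and~2 together give the claimed equivalence.

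\textbf{Main obstacle.} The conceptual architecture is light; the genuine work, and the step I expect to be most delicate, is the algebra in Step~2 — tracking the exact fractional powers of $(1-x)$ and of $\prod_{i=1}^{n-1}(1-v_i)$, the normalizing constants, and the orders of the Bessel functions when the size-biased sampling formula is specialized to the inverse Gaussian $\rho$ and $f_T$ and $R_n$ is integrated out. This computation is carried out in detail in \citet{favaro2012} (and is implicit in the construction of \citet{lijoi2005b}), to which I would refer for the bookkeeping.
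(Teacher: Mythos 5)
The paper does not prove this proposition itself; it simply cites \citet{favaro2012}, and your outline (inverse Gaussian completely random measure, change of variables plus the Bessel integral $\int_0^\infty t^{\nu-1}e^{-\frac12(B/t+t)}\,dt=2B^{\nu/2}K_\nu(\sqrt{B})$ for the finite-dimensional density, size-biased permutation for the stick-breaking weights) is precisely the argument of that reference, to which you also defer for the detailed bookkeeping of Step~2. Your Step~1 computation reproduces the stated density exactly, so the proposal is correct and takes essentially the same route as the paper's (cited) proof.
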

\begin{proof}
We refer to \citet{favaro2012} for the proof of the equivalence between 
Definition \ref{d.2.7} and Proposition \ref{p.2.7}.
\end{proof}

\begin{definition}[\citet{favaro2016}] \label{d.2.8} $P$ is called the 
  normalized generalized gamma process with parameters $\sigma
  \in (0, 1)$, $a>0$ and $H$, denoted by  $P \sim \NGGP(\sigma,a,H)$,   
if  the finite dimensional joint distributions of the stick-breaking weights  $\{v_1, v_2, \cdots\}$ are given  by   the following conditional distributions:
\begin{empheq}[left=\empheqlbrace]{align}   
&f_{v_1}(x)=\frac{x^{-\sigma}(1-x)^{\sigma-1}e^a}{\Gamma(\sigma)\Gamma(1-\sigma)}\sum_{j=0}^{\infty}\frac{(1-\sigma)_{j}}{j!}\frac{a^{\frac{j}{\sigma}}}{(1-x)^j}\Gamma\left(1-\frac{j}{\sigma};\frac{a}{(1-x)^{\sigma}}\right),\label{6.1}\\
&f_{v_n|v_1,\cdots,v_{n-1}}(x)=\frac{\sigma\Gamma((n-1)\sigma)x^{-\sigma}(1-x)^{n\sigma-1}}{\Gamma(1-\sigma)\Gamma(n\sigma)}\nonumber\\
&\qquad\qquad\qquad  \frac{\sum_{j=0}^{\infty}\frac{(1-n\sigma)_{j}}{j!}\frac{a^{\frac{j}{\sigma}}}{(1-x)^j\prod_{i=1}^{n-1}(1-v_i)^j}\Gamma\left(n-\frac{j}{\sigma};\frac{a}{(1-x)^{\sigma}\prod_{i=1}^{n-1}(1-v_i)^{\sigma}}\right)}{\sum_{j=0}^{\infty}\frac{(1-(n-1)\sigma)_{j}}{j!}\frac{a^{\frac{j}{\sigma}}}{\prod_{i=1}^{n-1}(1-v_i)^j}\Gamma\left(n-1-\frac{j}{\sigma};\frac{a}{\prod_{i=1}^{n-1}(1-v_i)^{\sigma}}\right)},\nonumber\\
&\qquad\qquad\qquad\qquad\qquad\qquad  n=2, 3,  \cdots\,, \label{6.2} 
\end{empheq}
where $\Gamma(c,x)=\int_x^\infty 
    u^{c-1} e^{-u}du$ is the upper incomplete gamma function.
\end{definition} 

\begin{definition}\label{d.2.9} 
 We call a random probability measure $P$ on $(\Om,\mathcal{F})$   the generalized Dirichlet process with parameters $a>0$, $r\in \mathbb{N}^+$ and $H$, denoted by 
  $P \sim \GDP(a,r,H)$, if for any measurable partition $(A_1,\cdots,A_n)$ of $\mathbb{X}$, the joint density of $(P(A_1), \cdots, P(A_
 {n}))$  is  given by 
\begin{align}
 f(x_1, \cdots,x_{n})=&\frac{(r!)^a}{\prod_{i=1}^n \Gamma(ra_i)}\int_0^{\infty} t^{ra-1}e^{-rt}  \left[\prod_{j=1}^{n}\Phi_2^{(r-1)}\left(a_j{\bf{I}}_{r-1};ra_j;tx_j{\bf{J}}_{r-1}\right)\right]dt\nonumber\\
 &\times \prod_{i=1}^nx_i^{ra_i-1} \times \mathbbm{1}_{\mathbb{S}}(x_1,\cdots,x_n),\label{7.1}
\end{align}
where $a_i=aH(A_i)$; ${\bf I}_{r-1}=(1,\cdots,1)^T$, ${\bf J}_{r-1}=(1,\cdots,r-1)$ 
are $r-1$ 
dimensional vectors and $\Phi_2^{N}({\bf b};c;{\bf x})$ is the confluent form of the fourth Lauricella hypergeometric function (see e.g. \citet{exton1976}), and
  $\mathbb{S}$ is the simplex defined by \eqref{e.2.3a}. 
\end{definition}

It is trivial to verify  that the Dirichlet process is a special case of
the generalized Dirichlet process with parameter $r=1$.   Although 
the expression  
\eqref{7.1} looks very sophisticated,  its    mean,  variance, and    predictive  distribution  have been computed   (see e.g. \citet{lijoi2005b}).  
This process  also admits a stick-breaking 
representation (e.g.  \citet{favaro2016}).  
However,   the corresponding 
  stick-breaking representation is more complicated to use 
for our study  of the limiting theorems.  So,  we rather use  this 
sophisticated finite dimensional distributions than 
the more sophisticated  stick-breaking representation,   which we omit. 
 
As we are presenting  the functional central limit theorem
of  $P$,  we need  to  recall the   definition of the  Brownian bridge process
of  parameter $H$ (see e.g. \citet{kim2003} for more details). 
\begin{definition}Let $H$ be a  measure on $(\XX, \cX)$ 
and let   $B_H^o=(B_H^o (\om, A), \om\in \Om, A\in\cX)$ be a stochastic process
(random measure) with parameter $A\in \cX$.   It is called the Brownian 
bridge with parameter $H$ if the following two conditions are satisfied. 
\begin{enumerate}
\item[(i)] $B_H^o$ is Gaussian. Namely, for any elements $A_1, \cdots, A_n\in \cX$,  $B_H^o (  A_1  ), \cdots,
\\ 
B_H^o (  A_n )$ are   jointly centered  Gaussian random variables on the probability space $(\Om, \cF,  \PP)$. 
\item[(ii)] For any $A_1, A_2\in \cX$, the  
 covariance of $B(A_1)$ and $B(A_2)$ is given by 
\begin{equation}
\EE \left[ B(A_1) B(A_2)\right]=H(A_1\cap A_2)-H(A_1)H(A_2)\,. 
\end{equation}
\end{enumerate}
\end{definition}


%
To state the functional central limit 
theorem we also need the space $D(\RR^d)$ introduced in
 Section 3 of \citet{bickel1971}. The  characteristics  
  of    the elements (functions)  in $D(\RR^d)$ are given by  their continuity properties 
  described as follows. For   $1 \leq p\leq d$, let  $R_p$ be one of the relations $<$ or  $\geq$  and for $t 
  =(t_1, \cdots, t_d)\in \RR^d$  let $\mathcal{Q}_{R_1,\cdots,R_d}$ be the quadrant
\[
\mathcal{Q}_{R_1,\cdots,R_d}:=\left\{(s_1,\cdots,s_d)\in \RR^d :\, s_p R_p t_p,\, 1\leq p \leq d\right\}.
\]
Then, $x \in D(\RR^d)$ if and only if 
 (see e.g. \citet{straf1972}) for each $t\in \RR^d$,
the following two conditions hold:
 (i) $x_{\mathcal{Q}}=\lim_{s \rightarrow t,\, s \in \mathcal{Q}}x(s)$ exists for each of the $2^d$ quadrants $\mathcal{Q}=\mathcal{Q}_{R_1,\cdots, R_d}(t)$
(namely, for all the combinations that $R_1=``<"$,  or $``\ge" $, $\cdots$, $R_d=``<"$  or $ ``\ge"$), and (ii) $x(t)=x_{\mathcal{Q}_{\geq, \cdots, \geq}}$. In 
other words, $D(\RR^d)$ is the space of 
functions that are ``continuous from above with limits from below", which are similar   to  the space of 
the c\`{a}dl\`{a}g (French word abbreviation for 
``right continuous with left limits") functions
in one variable   (i.e. $d=1$). The metric   on $D(\RR^d)$ is introduced as follows. Let $\Lambda=\{\lambda:\RR^d\rightarrow \RR^d: \, \lambda(t_1,\cdots, t_d)=(\lambda_1(t_1),\cdots,\lambda_d(t_d))\}$, where each $\lambda_p: \RR \rightarrow \RR$ is continuous, strictly increasing and 
 has limits at both infinities.   Denote the  Skorohod  distance between $x,y \in D(\RR^d)$   by    
$$d(x,y)= \inf\{\min (\parallel x-y\lambda \parallel, \parallel \lambda \parallel):\, \lambda \in \Lambda\},$$
where $\displaystyle \parallel x-y\lambda \parallel= \sum_{n=1}^\infty 
\sup_{|t|\le n}    |x(t)-y(\lambda(t))| $ and $
\displaystyle \parallel \lambda \parallel=\sum_{n=1}^\infty 
\sup_{|t|\le n}   |\lambda(t)-t| $.  
 
\def\BB{\mathbb{B}}
Having introduced the metric  space 
$D(\RR^d)$  we can now explain  the concept of  weak convergence of a random measure
  on this space  with respect to its  
 Skorohod topology (the topology on $D(\RR^d)$ induced by the
Skorohod distance $d(x,y)$).
 Let $\QQ_a: \Om\times \cB(\RR^d)\rightarrow [0, 1]$  be a family of  random probability measures 
  depending on a parameter $a>  0$ and  let $\BB: 
  \Om\times \cB(\RR^d)\rightarrow [0, 1]$  be another random probability measure. 
 Define 
 \[
 \QQ_a(t_1, \cdots, t_d)=\QQ_a((-\infty, t_1]\times \cdots \times
 (-\infty, t_d])\,,
 \quad (t_1, \cdots, t_d)\in \RR^d\,. 
 \]
\begin{definition}
We say $\QQ_a$ converges to $\BB$ weakly on $D(\RR^d)$ with respect to the Skorohod topology,
denote  $\QQ_a\stackrel{weakly}\rightarrow \BB$ in $D(\RR^d)$,  if for any bounded continuous 
(continuous 
with respect to Skorohod topology) functional  $f:D(\RR^d)\rightarrow \RR$
we have
\begin{equation}
\lim_{a\rightarrow \infty} \EE\!\! \left[f(\QQ_a(\cdot, \cdots,\cdot))
\right]=\EE\!\! \left[f(\BB(\cdot, \cdots,\cdot))\right]\,. 
\end{equation}
  \end{definition}  
\subsection{
Notations}
Assume that  the random probability measure $P$ is  a stick-breaking process having
the stick-breaking representation \eqref{1.1}-\eqref{1.111}.   For any   $A\in \mathcal{X}$,
\begin{eqnarray}
\EE(P(A))
&=& \EE\left[ \sum_{i=1}^{\infty}w_i \delta_{\theta_i}(A)\right]=\sum_{i=1}^{\infty}\EE(w_i) \EE \left[\mathbbm{1} _A(\theta_i)\right]\nonumber\\
&=&\sum_{i=1}^{\infty}\EE(w_i) H(A)=
\EE\left[\sum_{i=1}^{\infty} w_i\right]  H(A)=H(A), \label{e.2.9a} 
\end{eqnarray} 
%
%
since $\sum_{i=1}^{\infty} w_i=1$ a.s. We can also obtain its variance as follows.
\begin{align}
	 \Var\left[P(A)\right]&= \mathbb{E}\left[\left(P(A)-H(A)\right)^2\right]=\mathbb{E}\left[\left(\sum_{i=1}^{\infty}w_i\left(\delta_{\theta_i}(A)-H(A)\right)\right)^2\right]\nonumber\\
	&=\mathbb{E}\left[\sum_{i=1}^{\infty}w_i^2\left(\delta_{\theta_i}(A)-H(A)\right)^2\right]\nonumber\\
	&\qquad +2\mathbb{E}\left[\sum_{1 \leq i<j<\infty}w_iw_j\left(\delta_{\theta_i}(A)-H(A)\right)\left(\delta_{\theta_j}(A)-H(A)\right)\right]\nonumber\\
	&=\mathbb{E}\left[\left(\delta_{\theta_i}(A)-H(A)\right)^2\right]\mathbb{E}\left[\sum_{i=1}^{\infty}w_i^2\right]\nonumber\\
	&=H(A)(1-H(A))\mathbb{E}\left[\sum_{i=1}^{\infty}w_i^2\right].\label{1.112}
\end{align} 
Based on the expectation and variance of $P$, 
we introduce the following  quantities that are investigated in the main theorems:  
\begin{align}
 & D_a(\cdot)=\frac{P(\cdot)-\mathbb{E}[P(\cdot)]}{\sqrt{\Var[P(\cdot)]}}=\frac{P(\cdot)-H(\cdot) }{\sqrt{H(A)(1-H(A))\mathbb{E}\left[\sum_{i=1}^{\infty}w_i^2\right]}},\label{1.2}
\end{align}
where the last identity follows from \eqref{e.2.9a}-\eqref{1.112}. 
Up to a constant we may  just consider the following quantity 
for the notational simplicity: 
\begin{align} 
 &Q_{H,a}(\cdot)=\frac{P(\cdot)-\mathbb{E}[P(\cdot)]}{\sqrt{\mathbb{E}\left[\sum_{i=1}^{\infty}w_i^2\right]}}.\label{1.16}
\end{align}

\section{Moment results}\label{s.moment}
We use the method of moments to show the announced 
asymptotics. This requires to have a nice 
estimates of the moments of the process $P$, which in turn  requires some nice
bounds for    the moments of $\{w_i\}_{i=1}^\infty$.
Thus, in this section we present the asymptotic behaviors of the joint moments of $w_i$'s for various  processes  introduced  in  the  previous    section. These results  will play a key role  in the proofs of  our main theorems. On the other hand, they also have their own interest. 

In the following proposition and  through out the paper we use the notation $p_{m:n}:=\sum_{i=m}^{n}p_i$ for $m \leq n$.

\begin{prop}\label{prop.3.1}
 Let $P \sim \DPG(g_a,H)$, i.e., the stick-breaking weights $v_i \overset{iid}{\sim} g_a(x), 0<x<1$, where $a>0$ is a certain  parameter. We assume that $v_i$ is not identically $0$.
 If $\displaystyle \lim_{a \rightarrow \infty} \frac{\mathbb{E}[v_1^{n+1}]}{\mathbb{E}[v_1^n]}=0$ for all $n \in \ZZ_+$
 (set of nonnegative integers), then for any nonnegative integers $m,n$,
\begin{empheq}[left=\empheqlbrace]{align} 
	&\mathbb{E}\left[v_i^n(1-v_i)^m\right]=\mathbb{E}[v_1^n]+o\left(\mathbb{E}[v_1^n]\right),\label{0.1}\\
	&\sum_{j=0}^{\infty}\left(\mathbb{E}\left[(1-v_i)^m\right]\right)^j=\frac{1}{m\mathbb{E}[v_1]}+o\left(\frac{1}{m\mathbb{E}[v_1]}\right)\,. \label{0.2}
\end{empheq} 
Furthermore, let the sequence $\{w_i\}_{i=1}^{\infty}$ be defined as in \eqref{1.111}, and let $p_1, \cdots, p_k$ be nonnegative integers. Then
\begin{equation}
	\begin{aligned}
    &\mathbb{E}\left[\sum_{1\leq i_1<i_2<\cdots<i_k<\infty}w_{i_1}^{p_1}w_{i_2}^{p_2}\cdots w_{i_k}^{p_k}\right]\\
    &=\frac{\mathbb{E}[v_1^{p_1}]\cdots\mathbb{E}[v_1^{p_k}]}{p_{1:k}p_{2:k} \cdots p_{k:k}\left(\mathbb{E}[v_1]\right)^k}+o\left(\frac{\mathbb{E}[v_1^{p_1}]\cdots\mathbb{E}[v_1^{p_k}]}{\left(\mathbb{E}[v_1]\right)^k}\right).\label{0.3}
	\end{aligned}
\end{equation}
In particular, when   $p_j=2$ for all $j\in\{ 1, \cdots, k\}$
(hence $p_{1:k}=2k$), the asymptotics \eqref{0.3} becomes  
\begin{equation}
\begin{aligned}
&\mathbb{E}\left[\sum_{1\leq i_1<i_2<\cdots<i_k<\infty}w_{i_1}^2w_{i_2}^2\cdots w_{i_k}^2\right]=\frac{1}{2^kk!}\left(\frac{\mathbb{E}[v_1^2]}{\mathbb{E}[v_1]}\right)^k+o\left(\left(\frac{\mathbb{E}[v_1^2]}{\mathbb{E}[v_1]}\right)^k\right).\label{0.4}
\end{aligned}
\end{equation}
\end{prop}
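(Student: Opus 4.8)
The plan is to first dispatch the two elementary limits \eqref{0.1} and \eqref{0.2}, and then assemble \eqref{0.3} from them by decomposing each $w_{i_l}$ into its $v$-factors and exploiting the independence of $\{v_i\}$. Note first that $\E[v_1^n]>0$ for every $n\ge 0$ (since $v_1$ is not identically $0$, it is positive on an event of positive probability), so all ratios below are well defined, and the hypothesis $\E[v_1^{n+1}]/\E[v_1^n]\to 0$ iterates: writing $\E[v_1^{n+l}]/\E[v_1^n]$ as a product of $l$ such ratios shows $\E[v_1^{n+l}]=o(\E[v_1^n])$ for every fixed $l\ge 1$ (in particular, with $n=0$, $\E[v_1]\to 0$). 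For \eqref{0.1} I would expand $(1-v_i)^m=\sum_{l=0}^m\binom{m}{l}(-v_i)^l$ and take expectations, so that $\E[v_i^n(1-v_i)^m]=\E[v_1^n]+\sum_{l=1}^m\binom{m}{l}(-1)^l\E[v_1^{n+l}]$; since $m$ is fixed the remainder is a finite sum of terms each $o(\E[v_1^n])$. For \eqref{0.2}, since $v_1$ is not identically $0$ we have $\E[(1-v_i)^m]<1$ (for $m\ge 1$), so the series is geometric and equals $1/\bigl(1-\E[(1-v_i)^m]\bigr)=1/\E\bigl[1-(1-v_i)^m\bigr]$; expanding $1-(1-v_i)^m=mv_i-\binom{m}{2}v_i^2+\cdots$ and using the ratio argument gives $\E[1-(1-v_i)^m]=m\E[v_1]\bigl(1+o(1)\bigr)$, and inverting yields \eqref{0.2}.

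For \eqref{0.3}, recall $w_{i_l}=v_{i_l}\prod_{j<i_l}(1-v_j)$. Fixing $1\le i_1<\cdots<i_k<\infty$ and collecting factors in $\prod_{l=1}^k w_{i_l}^{p_l}$, each selected index $i_l$ contributes $v_{i_l}^{p_l}(1-v_{i_l})^{p_{l+1:k}}$, while each unselected index $j$ with $i_l<j<i_{l+1}$ contributes $(1-v_j)^{p_{l+1:k}}$ (using the conventions $i_0=0$ and $p_{k+1:k}=0$). By independence and identical distribution of the $v_i$,
\[
\E\Bigl[\prod_{l=1}^k w_{i_l}^{p_l}\Bigr]=\prod_{l=1}^k\E\bigl[v_1^{p_l}(1-v_1)^{p_{l+1:k}}\bigr]\,\prod_{l=0}^{k-1}\bigl(\E[(1-v_1)^{p_{l+1:k}}]\bigr)^{\,i_{l+1}-i_l-1}.
\]
Since all terms are nonnegative, Tonelli lets me exchange $\E$ with the sum over $1\le i_1<\cdots<i_k<\infty$; and the map $(i_1,\dots,i_k)\mapsto(n_0,\dots,n_{k-1})$ with $n_0=i_1-1$ and $n_l=i_{l+1}-i_l-1$ is a bijection onto $\{0,1,2,\dots\}^k$, so the sum factors as
\[
\sum_{1\le i_1<\cdots<i_k<\infty}\E\Bigl[\prod_{l=1}^k w_{i_l}^{p_l}\Bigr]=\Bigl(\prod_{l=1}^k\E\bigl[v_1^{p_l}(1-v_1)^{p_{l+1:k}}\bigr]\Bigr)\prod_{l=0}^{k-1}\sum_{n=0}^\infty\bigl(\E[(1-v_1)^{p_{l+1:k}}]\bigr)^{n}.
\]

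It now remains to insert the asymptotics: \eqref{0.1} gives $\E[v_1^{p_l}(1-v_1)^{p_{l+1:k}}]=\E[v_1^{p_l}](1+o(1))$, and \eqref{0.2} gives $\sum_{n\ge 0}(\E[(1-v_1)^{p_{l+1:k}}])^n=(p_{l+1:k}\E[v_1])^{-1}(1+o(1))$; since only finitely many factors are involved the $(1+o(1))$'s multiply together, and $\prod_{l=0}^{k-1}p_{l+1:k}=p_{1:k}p_{2:k}\cdots p_{k:k}$ delivers \eqref{0.3}. Specializing to $p_1=\cdots=p_k=2$ gives $p_{l:k}=2(k-l+1)$, hence $p_{1:k}p_{2:k}\cdots p_{k:k}=2^k k!$ and $\E[v_1^{p_l}]=\E[v_1^2]$, which is \eqref{0.4}. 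I expect the only real obstacle to be the bookkeeping in the factorization — correctly identifying which power of $(1-v_j)$ each index carries and reindexing by the gaps — since once the product formula is in place everything reduces mechanically to \eqref{0.1}--\eqref{0.2}; a minor technical point worth flagging is that \eqref{0.3} presupposes $p_k\ge 1$ (equivalently, that the left-hand side is finite), which holds in all the uses made of it, in particular in \eqref{0.4}.
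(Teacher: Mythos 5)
Your proof is correct and takes essentially the same route as the paper's: the binomial expansion together with the ratio hypothesis for \eqref{0.1}--\eqref{0.2}, followed by the same factorization of the $k$-fold sum via independence — the paper writes the sum as a product of factors $\mathcal{S}_m$, each pairing a selected index with the geometric sum over the preceding gap, which is exactly your reindexing by the gaps $n_l=i_{l+1}-i_l-1$. Your remark that \eqref{0.3} implicitly requires $p_k\ge 1$ (so that the left-hand side is finite) is a fair point that the paper leaves unstated.
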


\begin{prop}\label{prop.3.2}
Let $P \sim \PDP(a,H)$.  Namely,  let the stick-breaking weights $v_1, v_2, \cdots $ be  
 given by  \eqref{e.4.1} and let $w_1, w_2, \cdots$ 
be constructed from these $v_i$'s by \eqref{1.111}. Let $p_1, \cdots, p_k$ be nonnegative  integers. 
Then,  we have the following identity. 
    \begin{align}
    &\mathbb{E}\left[\sum_{1\leq i_1<i_2<\cdots<i_k<\infty}w_{i_1}^{p_1}w_{i_2}^{p_2}\cdots w_{i_k}^{p_k}\right]\nonumber\\
    &=\frac{1}{(a+kb)(a+1)_{(p_{1:k}-1)}}\prod_{i=1}^k\  \frac{(1-b)_{p_i}(a+bi)}{ p_{i:k}-(k-i+1)b }\,. \label{1.2.3}
    \end{align}
In particular, when   $p_j=2$ for all $j\in\{ 1, \cdots, k\}$, the
above  expectation becomes 
\begin{align}
\mathbb{E}\left[\sum_{1\leq i_1<i_2<\cdots<i_k<\infty}w_{i_1}^2w_{i_2}^2\cdots w_{i_k}^2\right]=\frac{(1-b)^k(a+b)\cdots\left(a+b(k-1)\right)}{k!(a+1)\cdots(a+2k-1)}.\label{1.2.4}
\end{align}
\end{prop}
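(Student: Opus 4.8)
The plan is to exploit the independence of the stick-breaking weights $\{v_i\}$ to turn the expectation into a product of $\Beta$-moments over the ``gaps'' between consecutive indices, and then to evaluate the resulting nested sum over $i_1<\cdots<i_k$ one index at a time by a telescoping recursion. First I would fix an ordered tuple $1\le i_1<\cdots<i_k$ (with $i_0:=0$) and expand $w_{i_\ell}=v_{i_\ell}\prod_{j<i_\ell}(1-v_j)$. Collecting, for each index $j\le i_k$, the powers of $v_j$ and of $1-v_j$ occurring in $w_{i_1}^{p_1}\cdots w_{i_k}^{p_k}$, one finds that at $j=i_\ell$ the exponents are $p_\ell$ on $v_j$ and $p_{\ell+1:k}$ on $1-v_j$, while for $i_{\ell-1}<j<i_\ell$ only $1-v_j$ occurs, with exponent $p_{\ell:k}$, and all exponents vanish for $j>i_k$. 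Using independence of the $v_j$ and Tonelli's theorem (all summands are nonnegative) this gives
\[
\mathbb{E}\Big[\sum_{i_1<\cdots<i_k}w_{i_1}^{p_1}\cdots w_{i_k}^{p_k}\Big]=\sum_{i_1<\cdots<i_k}\prod_{\ell=1}^{k}\Big[\Big(\prod_{j=i_{\ell-1}+1}^{i_\ell-1}\mathbb{E}\big[(1-v_j)^{p_{\ell:k}}\big]\Big)\mathbb{E}\big[v_{i_\ell}^{p_\ell}(1-v_{i_\ell})^{p_{\ell+1:k}}\big]\Big],
\]
into which I would substitute the explicit $\Beta$-moments: since $v_i\sim\Beta(1-b,a+ib)$, one has $\mathbb{E}[v_i^n(1-v_i)^m]=(1-b)_n\,(a+ib)_m/(a+1+(i-1)b)_{n+m}$.

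The core of the argument is to perform the summations from the innermost index $i_k$ outward. Writing $\gamma_i:=a+1+(i-1)b$, so that $\gamma_{i+1}=\gamma_i+b$ and $a+ib=\gamma_i+b-1=\gamma_{i+1}-1$, I would prove by downward induction on $\ell=k,k-1,\dots,1$ that the partial sum over blocks $\ell,\ell+1,\dots,k$, with the $\ell$-th gap starting at index $N$ (i.e.\ $i_{\ell-1}=N-1$), equals
\[
T_\ell(N)=\Big(\prod_{i=\ell}^{k}\frac{(1-b)_{p_i}}{p_{i:k}-(k-i+1)b}\Big)\cdot\frac{\prod_{r=1}^{k-\ell}\big(a+(N-1+r)b\big)}{(a+1+(N-1)b)_{\,p_{\ell:k}-1}},
\]
with the convention $T_{k+1}\equiv1$. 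In the inductive step one first cancels $\mathbb{E}\big[v_m^{p_\ell}(1-v_m)^{p_{\ell+1:k}}\big]$ against the denominator $(a+1+mb)_{p_{\ell+1:k}-1}$ of $T_{\ell+1}(m+1)$, using $(a+mb)_{s}=(\gamma_{m+1}-1)(\gamma_{m+1})_{s-1}$; this reduces the sum over $i_\ell=m$ to the ``master sum''
\[
G(N)=\sum_{m\ge N}\Big(\prod_{j=N}^{m-1}\frac{(a+jb)_{P}}{(\gamma_j)_P}\Big)\frac{\phi_m}{(\gamma_m)_P},\qquad \phi_m:=\prod_{r=0}^{d-1}\big(a+(m+r)b\big),
\]
with $P=p_{\ell:k}$ and $d=k-\ell$, which obeys the first-order recursion $G(N)=\phi_N/(\gamma_N)_P+\big[(a+Nb)_P/(\gamma_N)_P\big]\,G(N+1)$. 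I would then verify that $G(N)=\phi_N\big/\!\big[(P-(d+1)b)\,(\gamma_N)_{P-1}\big]$ solves this recursion (the crucial cancellations being $(a+Nb)\,\phi_{N+1}=\phi_N\,(a+(N+d)b)$, $(a+Nb)_P=(\gamma_N+b-1)(\gamma_N+b)_{P-1}$, and $\gamma_N+P-1=(a+Nb)+P-b$) and is bounded; since $\prod_j\mathbb{E}[(1-v_j)^P]=0$ the recursion has a unique bounded solution, so this closed form is correct. Feeding $G$ back in yields the stated $T_\ell(N)$, and evaluating at $\ell=1$, $N=1$ produces $\big(\prod_{i=1}^k\frac{(1-b)_{p_i}}{p_{i:k}-(k-i+1)b}\big)\cdot\frac{(a+b)(a+2b)\cdots(a+(k-1)b)}{(a+1)_{p_{1:k}-1}}$, which is exactly \eqref{1.2.3} once one writes $(a+b)\cdots(a+(k-1)b)=\big(\prod_{i=1}^{k}(a+bi)\big)/(a+kb)$.

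Finally, \eqref{1.2.4} is the specialization $p_j=2$: then $p_{i:k}=2(k-i+1)$, so $p_{i:k}-(k-i+1)b=(k-i+1)(2-b)$ cancels the factor $2-b$ in $(1-b)_2=(1-b)(2-b)$, giving $\prod_{i=1}^k\frac{(1-b)_2}{p_{i:k}-(k-i+1)b}=(1-b)^k/k!$, and \eqref{1.2.4} follows after combining with the prefactor $1/[(a+kb)(a+1)_{2k-1}]$. I expect the main obstacle to be Step~3: guessing the correct inductive invariant $T_\ell(N)$ — in particular the growing numerator product $\prod_r(a+(N-1+r)b)$ and the coefficient $p_{\ell:k}-(k-\ell+1)b$ — and then pushing through the Pochhammer-symbol algebra that verifies the master-sum identity at each stage; Steps~1 and the specialization are routine. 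Throughout one tacitly restricts to $p_j$'s for which the series converges, which (since $b<1$, so $p_{i:k}\ge k-i+1>(k-i+1)b$ whenever all $p_j\ge1$) in particular keeps every denominator nonzero and covers the case $p_j=2$ needed later.
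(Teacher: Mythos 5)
Your proposal is correct, and its skeleton coincides with the paper's: both exploit independence to factor the expectation into a product of beta-moments over the gaps between consecutive indices, and both then collapse the nested sum $\sum_{i_1<\cdots<i_k}$ one index at a time, from the innermost $i_k$ outward, via a telescoping recursion. Where you diverge is in how the key one-index sums are evaluated. The paper recognizes the innermost sum as a Gauss hypergeometric series ${}_2F_1(\cdot,\cdot,\cdot;1)$ and, for the subsequent stages (where the summand acquires the extra polynomial factor $\prod_r\bigl(a+(m+r)b\bigr)$, your $\phi_m$), introduces an \emph{increasing coefficient hypergeometric function} ${}_2Q_1$ together with a Gauss-type closed-form evaluation at $x=1$, proved by reduction to Gauss's theorem. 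You instead guess the inductive invariant $T_\ell(N)$ outright and verify it by showing your master sum $G(N)$ satisfies a first-order affine recursion whose bounded solution is unique because $\prod_j\mathbb{E}[(1-v_j)^P]=0$; the Pochhammer cancellations you list are exactly the ones needed, and I checked that they close up (the left side $\gamma_N+P-1$ equals $(a+Nb)+P-b$, matching $P-(d+1)b+a+(N+d)b$ on the right). Your route is more elementary and self-contained — it needs no classical summation theorem — at the price of having to divine the closed form in advance and of a small additional obligation (a priori boundedness/convergence of the series defining $G(N)$, which your remark that $P=p_{\ell:k}>(d+1)b$ when all $p_j\ge 1$ and $b<1$ does cover). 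The paper's route packages the same cancellation into a reusable summation formula (its Proposition on ${}_2Q_1$), which is arguably cleaner to cite but rests on Gauss's theorem. Both yield the identical closed form, and your specialization to $p_j=2$ agrees with \eqref{1.2.4}.
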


 \begin{prop}\label{prop.3.3}
Let $P \sim \N-IGP(a,H)$.  Namely,  let   the stick-breaking weights  $\{v_i\}_{i=1}^{\infty}$  be  given by \eqref{5.1}-\eqref{5.2}  and let $\{w_i\}_{i=1}^{\infty}$
be given by \eqref{1.111}.   Then,  for any  positive integer $p$, we have
\begin{align}
&\mathbb{E}\left[\sum_{n=1}^{\infty}w_n^p\right]={O}\left(\frac{1}{a^{p-1}}\right)
\quad \hbox{as $a\rightarrow \infty$}\,,\label{5.01}\\
&\mathbb{E}\left[\sum_{1 \leq i_1<i_2<\cdots <i_k<\infty}w_{i_1}^{p_1}w_{i_2}^{p_2}\cdots w_{i_k}^{p_k}\right]={O}\left(\frac{1}{a^{p_{1:k}-k}}\right) \quad \hbox{as $a\rightarrow \infty$}\,. \label{5.9}
\end{align}
Furthermore,  when $p=p_1=\cdots=p_k=2$, we have 
\begin{align}
&\mathbb{E}\left[\sum_{n=1}^{\infty}w_n^2\right]=\frac{1}{a}+o\left(\frac{1}{a}\right)
\quad \hbox{as $n\rightarrow \infty$}\,,\label{5.02}\\
&\mathbb{E}\left[\sum_{1 \leq i_1<i_2<\cdots <i_k<\infty}w_{i_1}^{2}w_{i_2}^{2}\cdots w_{i_k}^{2}\right]=\frac{1}{k!a^k}+o\left(\frac{1}{a^k}\right)\,.\label{5.10}
\end{align}
[namely, the leading coefficient in \eqref{5.01} is $1$ and the leading coefficient in \eqref{5.9} is 
$\frac{1}{k! }$.]
\end{prop}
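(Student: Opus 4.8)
The plan is to sidestep the conditional densities \eqref{5.1}--\eqref{5.2} entirely and instead use the representation of $P\sim\N-IGP(a,H)$ as a normalized completely random measure (see \citet{lijoi2005b} and \citet{favaro2012}; this representation is consistent with \eqref{5.1}--\eqref{5.2} and with Proposition \ref{p.2.7}): one may take $w_i=J_i/T$, where $\{J_i\}_{i\ge1}$ are the points of a Poisson random measure on $(0,\infty)$ with intensity $\nu_a(\ud x)=\tfrac{a}{\sqrt{2\pi}}\,x^{-3/2}e^{-x/2}\,\ud x$, where $T=\sum_{i\ge1}J_i\in(0,\infty)$ is the total mass (an inverse Gaussian variable, $\EE[e^{-uT}]=e^{-\psi_a(u)}$ with $\psi_a(u)=a(\sqrt{1+2u}-1)$), and where $\theta_i\overset{iid}{\sim}H$ are independent of $\{J_i\}$. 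Since none of \eqref{5.01}--\eqref{5.10} involves the $\theta_i$'s, the whole problem reduces to computing joint moments of the unordered family $\{J_i/T\}$, and these admit a closed form.

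First I would combine the identity $T^{-m}=\Gamma(m)^{-1}\int_0^\infty u^{m-1}e^{-uT}\,\ud u$ with Tonelli's theorem (all summands are nonnegative) to write, for integers $p_1,\dots,p_k\ge1$ and $m:=p_{1:k}$,
\[
\EE\Big[\sum_{i_1,\dots,i_k\ \text{distinct}}w_{i_1}^{p_1}\cdots w_{i_k}^{p_k}\Big]=\frac{1}{\Gamma(m)}\int_0^\infty u^{m-1}\,\EE\Big[e^{-uT}\!\!\sum_{i_1,\dots,i_k\ \text{distinct}}J_{i_1}^{p_1}\cdots J_{i_k}^{p_k}\Big]\,\ud u .
\]
By the multivariate Mecke (Slivnyak) formula for $\{J_i\}$, the inner expectation equals $e^{-\psi_a(u)}\prod_{j=1}^k\big(\int_0^\infty x^{p_j}e^{-ux}\nu_a(\ud x)\big)$, and $\int_0^\infty x^{p}e^{-ux}\nu_a(\ud x)=\tfrac{a}{\sqrt{2\pi}}\,\Gamma\!\big(p-\tfrac12\big)(u+\tfrac12)^{-(p-1/2)}$ for $p\ge1$; hence
\[
\EE\Big[\sum_{i_1,\dots,i_k\ \text{distinct}}w_{i_1}^{p_1}\cdots w_{i_k}^{p_k}\Big]=\frac{a^{k}\prod_{j=1}^{k}\Gamma\!\big(p_j-\tfrac12\big)}{(2\pi)^{k/2}\,\Gamma(m)}\int_0^\infty u^{m-1}\big(u+\tfrac12\big)^{-(m-k/2)}e^{-a(\sqrt{1+2u}-1)}\,\ud u .
\]

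A Laplace analysis of this one-dimensional integral finishes the job. Substituting $u=t/a$, the exponent $a(\sqrt{1+2t/a}-1)$ increases to $t$ and exceeds $\tfrac23 t$ on $0\le t\le a$, so dominated convergence applies there, while on $u\ge1$ the bound $\psi_a(u)\ge(\sqrt2-1)\sqrt{au}$ makes the contribution $o(a^{-N})$ for every $N$; consequently $\int_0^\infty u^{m-1}(u+\tfrac12)^{-(m-k/2)}e^{-\psi_a(u)}\,\ud u=a^{-m}\big(2^{\,m-k/2}\Gamma(m)+o(1)\big)$, i.e.
\[
\EE\Big[\sum_{i_1,\dots,i_k\ \text{distinct}}w_{i_1}^{p_1}\cdots w_{i_k}^{p_k}\Big]=\frac{2^{\,m-k/2}\prod_{j=1}^{k}\Gamma\!\big(p_j-\tfrac12\big)}{(2\pi)^{k/2}}\,a^{k-m}\,(1+o(1)).
\]
Because $\sum_{i_1<\cdots<i_k}w_{i_1}^{p_1}\cdots w_{i_k}^{p_k}\le\sum_{i_1,\dots,i_k\ \text{distinct}}w_{i_1}^{p_1}\cdots w_{i_k}^{p_k}$ and the displayed constant is finite, the $O$-bounds \eqref{5.01} (the case $k=1$) and \eqref{5.9} follow. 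For $p_1=\cdots=p_k=2$ one has $m=2k$ and $\prod_j\Gamma\!\big(p_j-\tfrac12\big)=(\sqrt\pi/2)^k$, so the constant equals $2^{3k/2}(\sqrt\pi/2)^k/(2\pi)^{k/2}=1$; since $\sum_{i_1<\cdots<i_k}=\tfrac1{k!}\sum_{\text{distinct}}$ in this symmetric case, this gives $\EE\big[\sum_{i_1<\cdots<i_k}w_{i_1}^2\cdots w_{i_k}^2\big]=\tfrac1{k!\,a^{k}}+o(a^{-k})$, which is \eqref{5.10}, with \eqref{5.02} the case $k=1$.

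The main obstacle is the rigour of this last step: Tonelli takes care of the interchange $\int\ud u\leftrightarrow\EE$ for the nonnegative integrand (and of finiteness, since $\sum_{\text{distinct}}\prod_j w_{i_j}^{p_j}\le\prod_j\sum_i w_i^{p_j}\le1$), but one still has to establish an $a$-uniform control of $e^{-\psi_a(u)}$ — in effect the bound $\psi_a(u)\ge c\min(u,\sqrt{au})$ — so as to split the $u$-integral into a piece on $[0,1]$ handled by dominated convergence and a super-polynomially small tail; this is routine but must be carried out with care. (Working directly from \eqref{5.1}--\eqref{5.2} is also possible — write $w_n=v_nT_{n-1}$ with $T_n=\prod_{i\le n}(1-v_i)$ and unfold an iterated expectation over the time-inhomogeneous Markov chain $(T_n)$, reducing matters to $\EE[v_n^p\mid T_{n-1}]$, which by \eqref{5.2} is a ratio of the Bessel functions $K_{-n/2}$, $K_{-(n+1)/2}$ whose order and argument both grow — but that route requires uniform Bessel asymptotics and a separate large-$n$ tail estimate, and is appreciably more technical.)
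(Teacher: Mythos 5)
Your proof is correct, and it takes a genuinely different route from the paper's. The paper works directly with the joint density of $(v_1,\dots,v_n)$ obtained from \eqref{5.1}--\eqref{5.2} via an integral representation of the Bessel factor, and then grinds through a long chain of one-variable integrations (Whittaker and Bessel asymptotics, several changes of variables) to isolate the leading term; the exact constants in \eqref{5.02} and \eqref{5.10} emerge only after delicate bookkeeping of which powers of $v_{n-j}$ admit a closed-form integration. You instead pass to the underlying inverse Gaussian completely random measure, use the gamma identity for $T^{-m}$ together with the multivariate Mecke formula to reduce everything to a single one-dimensional Laplace-type integral, and read off not just the orders \eqref{5.01} and \eqref{5.9} but the exact leading constant $2^{m-k/2}\prod_{j}\Gamma(p_j-\tfrac12)/(2\pi)^{k/2}$ (with $m=p_{1:k}$) for arbitrary $(p_1,\dots,p_k)$ --- strictly more than the proposition asserts --- and the same computation with a different L\'evy intensity would dispatch Proposition \ref{prop.3.4} as well. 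Two points deserve explicit care in a write-up. First, the stick-breaking sequence $(w_i)$ of \eqref{5.1}--\eqref{5.2} is the \emph{size-biased permutation} of the normalized jumps $\{J_i/T\}$, not that family in an arbitrary enumeration; this is harmless here because $\sum_n w_n^p$, the symmetric sums in \eqref{5.02} and \eqref{5.10}, and the sum over \emph{ordered distinct tuples} are all invariant under re-indexing of the sequence, and for the asymmetric case \eqref{5.9} you correctly claim only the inequality $\sum_{i_1<\dots<i_k}\le\sum_{\mathrm{distinct}}$ --- but this invariance should be stated, since $\sum_{i_1<\dots<i_k}w_{i_1}^{p_1}\cdots w_{i_k}^{p_k}$ itself is \emph{not} permutation-invariant when the $p_j$ differ. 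Second, the identification of the multiset $\{w_i\}$ with $\{J_i/T\}$ is an external input (it is precisely how \eqref{5.1}--\eqref{5.2} are derived in \citet{favaro2012}), so your argument is not self-contained relative to Definition \ref{d.2.7}; that is a legitimate trade, and the Laplace estimates you flag ($\psi_a$ bounded below by a multiple of $t$ on the rescaled range and by a multiple of $\sqrt{au}$ on $u\ge 1$) are routine and correct as stated.
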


\begin{prop}\label{prop.3.4}
Let $P\sim \NGGP(\sigma,a,H)$.  Namely,  let
 the distribution of the stick-breaking weights $\{v_1, v_2, \cdots\}$ be given by \eqref{6.1}-\eqref{6.2} and let $\{w_i\}_{i=1}^{\infty}$ be defined by  \eqref{1.111}.  For any  positive integers $p$ and $p_1,\cdots, p_k$, we have
\begin{align}
&\mathbb{E}\left[\sum_{n=1}^{\infty}w_n^p\right]={O}\left(\frac{1}{a^{p-1}}\right) \quad\hbox{as $a\rightarrow \infty$},\label{6.3}\\
&\mathbb{E}\left[\sum_{1 \leq i_1<i_2<\cdots <i_k<\infty}w_{i_1}^{p_1}w_{i_2}^{p_2}\cdots w_{i_k}^{p_k}\right]={O}\left(\frac{1}{a^{p_{1:k}-k}}\right) \quad\hbox{as $a\rightarrow \infty$}.\label{6.4}
\end{align}
Furthermore, when $p=p_1=\cdots=p_k=2$ 
   and when $\sigma=\frac{1}{m}$ for some arbitrarily fixed 
    integer $m \geq 2$, we have  
\begin{align}
&\mathbb{E}\left[\sum_{n=1}^{\infty}w_n^2\right]=\frac{1}{a}+o\left(\frac{1}{a}\right) \quad\hbox{as $a\rightarrow \infty$},\label{6.5}\\
&\mathbb{E}\left[\sum_{1 \leq i_1<i_2<\cdots <i_k<\infty}w_{i_1}^{2}w_{i_2}^{2}\cdots w_{i_k}^{2}\right]=\frac{1}{k!a^k}+o\left(\frac{1}{a^k}\right) \quad\hbox{as $a\rightarrow \infty$}.\label{6.6}
\end{align} 
\end{prop}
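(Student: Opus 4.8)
\emph{Proof strategy.} I would follow the scheme already used for the normalized inverse Gaussian process (Proposition~\ref{prop.3.3}), now applied to the densities \eqref{6.1}--\eqref{6.2}. The starting point is that in \eqref{6.2} the conditional law of $v_n$ given $v_1,\dots,v_{n-1}$ depends on the past only through the single scalar $\tau_{n-1}:=\prod_{i=1}^{n-1}(1-v_i)$; writing $h_n(\cdot\,;t)$ for that conditional density when $\tau_{n-1}=t$ (with $h_1(\cdot\,;1)$ equal to \eqref{6.1}), the sequence $1=\tau_0,\tau_1,\tau_2,\dots$ is a Markov chain. Since $w_n=v_n\tau_{n-1}$, every moment we need is an iterated integral against the kernels $h_n$: with
\[
M^{(n)}_{p,q}(t):=\int_0^1 x^{p}(1-x)^{q}\,h_n(x;t)\,dx
\]
we have $\mathbb{E}\big[\sum_{n\ge1}w_n^{p}\big]=\sum_{n\ge1}\mathbb{E}\big[\tau_{n-1}^{\,p}\,M^{(n)}_{p,0}(\tau_{n-1})\big]$, and $\mathbb{E}[\sum_{i_1<\dots<i_k}w_{i_1}^{p_1}\cdots w_{i_k}^{p_k}]$ unwinds into a $k$-fold sum over the gaps $i_\ell-i_{\ell-1}$ in which the index $i_\ell$ emits a factor $M^{(i_\ell)}_{p_\ell,\,p_{\ell+1:k}}$ while each intermediate index contributes a transport factor $M^{(\cdot)}_{0,\,p_{\ell:k}}$, alternating with expectations of powers of the $\tau$'s --- the same bookkeeping that produces the closed form \eqref{1.2.3} for $\PDP$, except that the $v_i$ are now dependent.

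The analytic core is the large-$a$ behaviour of $M^{(n)}_{p,q}(t)$. I would substitute the $j$-series of \eqref{6.1}--\eqref{6.2}, interchange it (for fixed $a$ it converges absolutely) with the $x$-integral, evaluate each term by a Beta integral, and control the incomplete gamma functions via $\Gamma(c,z)\to\Gamma(c)$ as $z\downarrow0$, the one-sided bound $0\le\Gamma(c,z)\le\Gamma(c)$ for $c>0$, and the recursion $\Gamma(c,z)=c^{-1}\big(\Gamma(c+1,z)-z^{c}e^{-z}\big)$ to handle the non-positive integer first arguments $n-j/\sigma$. Because $a\to\infty$ forces every $v_i$ to be of order $1/a$, this gives $M^{(n)}_{p,0}(t)=O(a^{-p})$ uniformly in $n$ and $t$, together with $M^{(n)}_{0,q}(t)=1-q\,M^{(n)}_{1,0}(t)+O(a^{-2})$; inserting these into the sums above, each of the $k$ index-sums runs effectively over $n$ of order $a$, so the $k$ emission factors contribute $a^{-p_{1:k}}$ and the $k$ sums multiply it by $a^{k}$, yielding the exponents $a^{-(p-1)}$ in \eqref{6.3} and $a^{-(p_{1:k}-k)}$ in \eqref{6.4}. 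For these $O$-bounds only the one-sided estimate $\Gamma(c,z)\le\Gamma(c)$ is needed, so they hold for every $\sigma\in(0,1)$.

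For the sharp leading constants in \eqref{6.5}--\eqref{6.6} I would specialise to $\sigma=1/m$, where $j/\sigma=jm\in\mathbb{Z}$ and every incomplete gamma function in \eqref{6.1}--\eqref{6.2} collapses to a finite elementary expression ($\Gamma(N,z)=(N-1)!\,e^{-z}\sum_{l=0}^{N-1}z^{l}/l!$ for integer $N\ge1$, continued by the recursion above for $N\le0$). This makes the $a\to\infty$ expansions of $M^{(n)}_{2,0}$ and of the transport factors explicit, including their dependence on $n$ and on the conditioning value, so the recursion for $\mathbb{E}[\tau_n^{q}]$ can be solved asymptotically and the $n$-sum, after rescaling the index by $a$, becomes a Riemann sum for an explicit integral which evaluates to give leading coefficient $1$ in \eqref{6.5}; the ordered $k$-fold summation then supplies the combinatorial factor $1/k!$ in \eqref{6.6}, consistent with the Dirichlet case and, through $\sigma=\tfrac12$, with the N-IGP formulas \eqref{5.02}, \eqref{5.10}.

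I expect the principal obstacle to be the second step --- obtaining the asymptotics of $M^{(n)}_{p,q}(t)$ with errors that are uniform in the conditioning variable $\tau_{n-1}$ and summable across $n$ --- and its propagation through the $k$-fold recursion. The $j$-series in \eqref{6.1}--\eqref{6.2} has terms of oscillating sign (the Pochhammer symbol $(1-n\sigma)_j$ changes sign), and the naive termwise $z\to\infty$ asymptotics of $\Gamma(\cdot,z)$ are not summable in $j$, so one cannot argue term by term; resummation, or the closed form available when $\sigma=1/m$, is essential, and the resulting error estimates must then be tracked uniformly through the nested sums. (An alternative route, not taken here since the paper works directly with the stick-breaking weights, would realise $\NGGP$ as a normalized generalized gamma subordinator and compute $\mathbb{E}[\sum_n w_n^{p}]$ from its Laplace exponent via the Palm formula.)
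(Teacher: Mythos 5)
Your overall strategy is the right one --- mirror the N-IGP argument of Proposition~\ref{prop.3.3}, reduce everything to the large-$a$ asymptotics of Beta-type moments of the conditional kernels, and specialise to $\sigma=1/m$ for the sharp constants --- and you have correctly located the central obstacle: the $j$-series in \eqref{6.1}--\eqref{6.2} oscillates in sign, so termwise asymptotics of $\Gamma(\cdot\,;z)$ are not summable and one cannot argue term by term. The problem is that your proposal stops exactly there. You name ``resummation'' as essential but never say what it is, and your fallback claim that the $O$-bounds \eqref{6.3}--\eqref{6.4} follow for all $\sigma\in(0,1)$ from the one-sided estimate $0\le\Gamma(c,z)\le\Gamma(c)$ does not survive your own objection: that bound only applies to the finitely many terms with $n-j/\sigma>0$, and for the remaining (infinitely many) terms with non-positive first argument the recursion $\Gamma(c,z)=c^{-1}(\Gamma(c+1,z)-z^c e^{-z})$ produces individually large, sign-alternating contributions with no absolute control. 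So the analytic core of the proof --- the uniform asymptotics of your $M^{(n)}_{p,q}(t)$ --- is not actually established by anything in the proposal, for any $\sigma$.

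The missing idea is a specific, explicit resummation. The paper inserts the integral representation $\Gamma(c,x)=e^{-x}x^{c}\int_0^{\infty}e^{-xu}(1+u)^{c-1}\,du$ into every term of the $j$-series and then performs the sum over $j$ in closed form via the binomial series $\sum_{j\ge0}\frac{(\nu)_j}{j!}x^{j}$. This collapses the entire joint density of $(v_1,\dots,v_n)$ into a single one-dimensional integral, namely
\begin{align*}
f(v_1,\cdots,v_n)&=\frac{a^n\sigma^{n-1}}{[\Gamma(1-\sigma)]^n\Gamma(n\sigma)}\prod_{i=1}^n v_i^{-\sigma}(1-v_i)^{-(n-i)\sigma-1}e^{-\frac{a}{\prod_{i=1}^n(1-v_i)^{\sigma}}}\\
&\qquad\times\int_0^{\infty}\bigl(1-(1+t)^{-\frac{1}{\sigma}}\bigr)^{n\sigma-1}(1+t)^{n-1}e^{-\frac{at}{\prod_{i=1}^n(1-v_i)^{\sigma}}}\,dt,
\end{align*}
which is \eqref{6.7}; a Laplace-type substitution $t=\prod_i(1-v_i)^{\sigma}s/a$ then yields the product-form asymptotic density \eqref{6.9}, valid for every $\sigma\in(0,1)$. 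Only after this step does the N-IGP machinery apply: the bounds \eqref{6.3}--\eqref{6.4} follow by integrating variable by variable with the single identity \eqref{6.10}, and the sharp constants \eqref{6.5}--\eqref{6.6} at $\sigma=1/m$ come from integrating out $v_1,\dots,v_{n-m}$ recursively via \eqref{6.10} and treating the last $m$ variables simultaneously with the substitution $v_{n-i}=1-\bigl((a+y_0+\cdots+y_{i-1})/(a+y_0+\cdots+y_i)\bigr)^{1/\sigma}$ --- not from the elementary closed form of $\Gamma(N,z)$ at integer $N$, which plays no role in the paper's argument. Without the resummation to \eqref{6.7} (or an equivalent device, such as your parenthetical suggestion of working from the Laplace exponent of the generalized gamma subordinator), the proof does not go through.
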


\begin{prop}\label{prop.3.5}
Let $P\sim \GDP(a,r,H)$ and let $p_1,\cdots,p_k$ be nonnegative integers. Then, as $a \rightarrow \infty$,
 \begin{align}
 \mathbb{E}\left[\sum_{1 \leq i_1<i_2<\cdots <i_k<\infty}w_{i_1}^{p_1}w_{i_2}^{p_2}\cdots w_{i_k}^{p_k}\right]= {O}\left(\frac{1}{a^{p_{1:k}-k}}\right)\,. \label{e.2.11}
 \end{align}
 In particular, when   $p_j=2$ for all $j\in\{ 1, \cdots, k\}$, the
above  expectation becomes 
\begin{align}
\mathbb{E}\left[\sum_{1\leq i_1<i_2<\cdots<i_k<\infty}w_{i_1}^2w_{i_2}^2\cdots w_{i_k}^2\right]=\frac{\left[\frac{\sum_{k=1}^r\left(\frac{1}{k}\right)^2}{\left(\sum_{j=1}^r\frac{1}{j}\right)^2}\right]^k}{k!a^k}+o\left(\frac{1}{a^k}\right)\,. \label{e.2.12}
\end{align}
\end{prop}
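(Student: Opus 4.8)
\emph{Reduction to finite-dimensional moments.} The plan is to read off the symmetric moments of the weights $\{w_i\}$ from the finite-dimensional distributions of $P$, and then to compute the latter from the explicit density \eqref{7.1}. Fix $k\ge 1$ and nonnegative integers $p_1,\dots,p_k$ with $p_{1:k}=\sum_{j=1}^k p_j$, and let $(A_1,\dots,A_k,A_{k+1})$ be a measurable partition of $\mathbb X$ with $x_j:=H(A_j)$. Writing $P(A_j)=\sum_i w_i\,\mathbbm 1_{A_j}(\theta_i)$ and expanding $\prod_{j=1}^k P(A_j)^{p_j}$, the fact that the $\theta_i$ are i.i.d.\ with the \emph{nonatomic} law $H$ and independent of $\{w_i\}$ forces a term to survive the $\theta$-expectation only if each index value lies in a single $A_j$; its expectation is then a product of the $H(A_j)$'s (one factor per distinct index used in block $j$) times the matching symmetric moment of the $w_i$'s. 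Collecting terms shows $\mathbb E[\prod_{j=1}^k P(A_j)^{p_j}]$ is a polynomial in $(x_1,\dots,x_k)$, and the coefficient of the multilinear monomial $x_1 x_2\cdots x_k$ is exactly
\[
\widetilde S(p_1,\dots,p_k):=\mathbb E\Big[\sum_{i_1,\dots,i_k\ \mathrm{distinct}}w_{i_1}^{p_1}\cdots w_{i_k}^{p_k}\Big],
\]
with no contamination from lower-order symmetric moments, because that monomial can only be produced by assigning one distinct index to each block. Since termwise $\sum_{i_1<\cdots<i_k}w_{i_1}^{p_1}\cdots w_{i_k}^{p_k}\le\widetilde S(p_1,\dots,p_k)$, and $\widetilde S(p,\dots,p)=k!\,\mathbb E[\sum_{i_1<\cdots<i_k}w_{i_1}^p\cdots w_{i_k}^p]$, it suffices to find the $a\to\infty$ asymptotics of the single number $\widetilde S(p_1,\dots,p_k)$ — both for the $O$-bound \eqref{e.2.11} and, when $p_1=\dots=p_k=2$, for the precise constant in \eqref{e.2.12}.

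\emph{Computing the moment polynomial.} By Definition~\ref{d.2.9}, $\mathbb E[\prod_{j=1}^k P(A_j)^{p_j}]$ is the integral over the simplex $\mathbb S$ of the density \eqref{7.1} (taken with $n=k+1$) multiplied by $\prod_{j=1}^k x_j^{p_j}$, with $a_i=aH(A_i)$. I would expand each confluent Lauricella factor $\Phi_2^{(r-1)}(a_j\mathbf I_{r-1};ra_j;tx_j\mathbf J_{r-1})$ in its multiple power series (or, equivalently, use its Euler-type integral representation, valid since $(r-1)a_j<ra_j$); then the $\mathbb S$-integration collapses to Dirichlet integrals $\int_{\mathbb S}\prod_i x_i^{\alpha_i-1}\,dx=\prod_i\Gamma(\alpha_i)/\Gamma(\sum_i\alpha_i)$ and the $t$-integration to a single Gamma function. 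Once the factors that are exponentially large or small in $a$ are reconciled, resummation expresses $\mathbb E[\prod_j P(A_j)^{p_j}]$ as an explicit (if intricate) combination of ratios of Gamma/Pochhammer symbols in the $ra_i$'s and of hypergeometric-type series depending only on $r$. Substituting $x_{k+1}=1-x_1-\dots-x_k$ and extracting the coefficient of $x_1\cdots x_k$ produces a closed form for $\widetilde S(p_1,\dots,p_k)$; for $r=1$ the Lauricella factors are trivial and this reduces to the elementary identity $\widetilde S(p_1,\dots,p_k)=\prod_j(a_j)_{p_j}/(a)_{p_{1:k}}$ read at the relevant monomial, which already displays the structure to be expected for general $r$.

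\emph{Asymptotics.} Letting $a\to\infty$ with $H$ fixed, each $a_i=aH(A_i)\to\infty$ and $\Gamma(a_i+c)/\Gamma(a_i+c')\sim a_i^{\,c-c'}$, turning the closed form into an expansion in powers of $1/a$. The monomial $x_1\cdots x_k$ carries total $x$-degree $p_{1:k}$ but only $k$ ``free'' summation indices, which forces its coefficient to be of exact order $a^{-(p_{1:k}-k)}$; this is \eqref{e.2.11} (via the termwise domination above). When $p_1=\dots=p_k=2$, the leading $1/a$-term of the $k$-set computation factorizes into the product of the $k$ single-set leading terms, each governed by the same $r$-dependent constant $\gamma_r:=\big(\sum_{\ell=1}^r\ell^{-2}\big)/\big(\sum_{\ell=1}^r\ell^{-1}\big)^{2}$ that appears for $k=1$ (there, the same computation gives $\mathbb E[\sum_i w_i^2]=\gamma_r/a+o(1/a)$). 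Hence $\widetilde S(2,\dots,2)=\gamma_r^{\,k}a^{-k}+o(a^{-k})$, and dividing by $k!$ yields \eqref{e.2.12}.

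\emph{Main obstacle.} The genuinely hard part is the middle step: performing the simplex- and $t$-integrations against a product of confluent Lauricella functions and then controlling the $a\to\infty$ behaviour of the resulting multiply-indexed Gamma/Pochhammer series — in particular, justifying termwise passage to the limit, showing the neglected terms are truly $o(\cdot)$, and pinning down the exact constant $\gamma_r$ together with the factorization of the leading term over the $k$ factors when $p_1=\dots=p_k=2$. This is the technically heavy computation alluded to in the introduction; by contrast, the combinatorial reduction of the first step is routine once one observes that the disjointness of the $A_j$'s prevents any index from being shared between blocks.
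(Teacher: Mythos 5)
Your opening reduction is sound: because the $A_j$ are disjoint and the $\theta_i$ are iid with nonatomic law $H$ and independent of the weights, the coefficient of the multilinear monomial $x_1\cdots x_k$ in the polynomial $\mathbb{E}\bigl[\prod_{j=1}^k P(A_j)^{p_j}\bigr]$ is exactly the distinct-index symmetric moment $\widetilde S(p_1,\dots,p_k)$, and this dictionary between joint moments of $(P(A_1),\dots,P(A_k))$ and symmetric weight moments is precisely the one the paper itself exploits in the proof of Theorem \ref{theorem 0.1}. The problem is that everything after this reduction is announced rather than proved. The middle step --- integrating $\prod_j x_j^{p_j}$ against the density \eqref{7.1}, resumming the confluent Lauricella factors, and controlling the resulting Gamma/Pochhammer expressions as $a\to\infty$ --- is where the entire content of the proposition lies, and you explicitly leave it as ``the genuinely hard part.'' The degree-counting argument in your last step (total $x$-degree $p_{1:k}$ versus $k$ free indices ``forces'' exact order $a^{-(p_{1:k}-k)}$) is a heuristic: it neither rules out cancellation among terms of the same nominal order nor shows the order is attained, and it gives no access to the constant. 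Likewise, the assertion that for $p_j\equiv 2$ the leading term ``factorizes into the product of the $k$ single-set leading terms,'' each contributing $\gamma_r=\bigl(\sum_{\ell=1}^r \ell^{-2}\bigr)/\bigl(\sum_{\ell=1}^r \ell^{-1}\bigr)^2$, is exactly the nontrivial asymptotic-factorization statement \eqref{e.2.12} that needs proof; restating it is not establishing it. Even the $k=1$, $p=2$ base case ($\mathbb{E}[\sum_i w_i^2]=\gamma_r/a+o(1/a)$) requires the nontrivial computation \eqref{7.2}--\eqref{7.3}.

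For comparison, the paper does not attempt the direct term-by-term Lauricella computation you outline. It first proves the central limit theorem for $\GDP(a,r,H)$ (part (i) of Theorem \ref{theorem 2.17}) by a multivariate Laplace/saddle-point analysis of the integral representation \eqref{7.4} of the finite-dimensional density, normalizing with the exact variance $\Var[P(A)]=H(A)(1-H(A))\mathcal{I}_{a,r}$ and the expansion $\mathcal{I}_{a,r}=\gamma_r/a+o(1/a)$ obtained from the formula of \citet{lijoi2005a}; Proposition \ref{prop.3.5} is then deduced by running the moments-to-weights dictionary backwards from the limiting Gaussian moments. To complete your route you would have to actually execute the simplex and $t$ integrations against the product of $\Phi_2^{(r-1)}$ factors, justify the termwise passage to the limit, and extract the leading constant --- or else, as the paper does, obtain the asymptotics of $\mathbb{E}\bigl[\prod_j P(A_j)^{p_j}\bigr]$ by an independent argument and only then invert the dictionary. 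As it stands, the proposal is a plausible plan with its central analytic step missing.
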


\section{Main results}\label{s.main}
\subsection{Empirical strong law of large numbers}
The empirical strong law of large numbers
and the empirical Glivenko-Cantelli theorem play   undoubtedly  
  important roles in statistics. 
In this subsection we show the empirical strong law of large numbers
and the empirical Glivenko-Cantelli theorem for the various processes introduced
in Section \ref{s.pre}. 
But before we state our theorem, we need an  additional condition on the stick-breaking weights $v_i$ in the case of  Dirichlet process   with general stick-breaking weights. 

\begin{assumption}\label{a.5.1} 
Let the iid stick-breaking  weights $\{v_i\}$ satisfy 
     \begin{equation}\mathbb{E}[v_i^p]=\frac{C_{p}}{a^{k_p}}+o\left(\frac{1}{a^{k_p}}\right)\quad \hbox{as $a\rightarrow \infty$} \,, \label{e.3.27} 
     \end{equation}  
for any $p\in \NN$,  where $  k_p $ is a positive sequence satisfying  $jk_i \geq ik_j$ for $i \geq j$  and $  C_{ p} $ is a sequence of 
  finite constants, independent of $a$. 
  \end{assumption} 

\begin{theorem}\label{law of large number}
Let $P$ be  one of the Dirichlet 
process $\DP(a,H)$,   the Dirichlet process with general stick-breaking weights $\DPG(g_a,H)$  satisfying  Assumption \ref{a.5.1},  the two-parameter Poisson-Dirichlet process $\PDP(a,b,H)$, the  normalized inverse Gaussian process $\N-IGP(a,H)$, the normalized generalized gamma process $\NGGP(\sigma,a,H)$, 
   and the  generalized Dirichlet process $\GDP(a,r,H)$.    
     Assume that $a=n^{\tau} $  for some arbitrarily fixed $\tau>0$.   Then, as $n\rightarrow\infty$,
\begin{align}
P(A) \ \stackrel{a.s.}{\rightarrow}\   H(A)  \label{e.2.9} 
\end{align}
for any measurable $A\in \mathcal{X}$.
\end{theorem}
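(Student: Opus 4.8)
The plan is to deduce \eqref{e.2.9} from a high even-moment estimate for $P(A)-H(A)$ combined with the Borel--Cantelli lemma, the moments of $P(A)$ being controlled by the joint moments of the weights $\{w_i\}$ supplied by Propositions \ref{prop.3.1}--\ref{prop.3.5}. Fix $A\in\mathcal{X}$ and set $Z_i:=\delta_{\theta_i}(A)-H(A)$, so the $Z_i$ are i.i.d., mean zero, bounded by $1$, and independent of $\{v_i\}$ hence of $\{w_i\}$; by \eqref{e.2.9a}, $P(A)-H(A)=\sum_{i=1}^\infty w_iZ_i$. For each positive integer $k$ I would expand
\[
\mathbb{E}\big[(P(A)-H(A))^{2k}\big]=\sum_{i_1,\dots,i_{2k}\ge 1}\mathbb{E}\big[w_{i_1}\cdots w_{i_{2k}}\big]\,\mathbb{E}\big[Z_{i_1}\cdots Z_{i_{2k}}\big].
\]
Because the $Z_i$ are independent and centered, a multi-index $(i_1,\dots,i_{2k})$ contributes zero unless every distinct value occurring in it does so with multiplicity at least $2$; collecting the surviving terms according to the number $j$ of distinct indices (necessarily $j\le k$) and their multiplicities $m_1,\dots,m_j\ge 2$ with $m_1+\dots+m_j=2k$, one gets
\[
\mathbb{E}\big[(P(A)-H(A))^{2k}\big]\le C_k\sum_{j\le k}\ \sum_{\substack{m_1+\dots+m_j=2k\\ m_\ell\ge 2}}\ \mathbb{E}\Big[\sum_{1\le i_1<\dots<i_j<\infty}w_{i_1}^{m_1}\cdots w_{i_j}^{m_j}\Big],
\]
with $C_k$ a finite combinatorial constant depending only on $k$.

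Next I would invoke the moment proposition appropriate to $P$: Proposition \ref{prop.3.1} (together with Assumption \ref{a.5.1}) for $\DPG(g_a,H)$, and Propositions \ref{prop.3.2}, \ref{prop.3.3}, \ref{prop.3.4}, \ref{prop.3.5} for $\PDP(a,b,H)$, $\N-IGP(a,H)$, $\NGGP(\sigma,a,H)$, $\GDP(a,r,H)$ respectively ($\DP(a,H)$ being a special case). For all the processes other than $\DPG$ these give $\mathbb{E}\big[\sum_{i_1<\dots<i_j}w_{i_1}^{m_1}\cdots w_{i_j}^{m_j}\big]=O(a^{-(m_{1:j}-j)})=O(a^{-(2k-j)})$ as $a\to\infty$, and for $\DPG$ the bound \eqref{0.3}, combined with $\mathbb{E}[v_1^m]=O(a^{-k_m})$ and the inequality $k_m\ge m\,k_1$ (the $i=m$, $j=1$ instance of $jk_i\ge ik_j$ in Assumption \ref{a.5.1}), gives $O(a^{-(2k-j)k_1})$. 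Since $j\le k$, in every case the dominant contribution comes from $j=k$, so there is a constant $\gamma>0$ (namely $\gamma=1$, or $\gamma=k_1$ for $\DPG$) with
\[
\mathbb{E}\big[(P(A)-H(A))^{2k}\big]=O\!\left(a^{-\gamma k}\right)\qquad\text{as }a\to\infty.
\]
With $a=n^{\tau}$, Markov's inequality then yields, for every $\epsilon>0$ and every positive integer $k$,
\[
\mathbb{P}\big(|P(A)-H(A)|>\epsilon\big)\ \le\ \epsilon^{-2k}\,\mathbb{E}\big[(P(A)-H(A))^{2k}\big]\ \le\ C_{k,\epsilon}\,n^{-\gamma\tau k}.
\]
Choosing $k$ with $\gamma\tau k>1$ makes the right-hand side summable in $n$; Borel--Cantelli then gives $|P(A)-H(A)|\le\epsilon$ for all large $n$ almost surely, and intersecting these almost sure events over $\epsilon=1/m$, $m\ge 1$, produces \eqref{e.2.9}.

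The delicate step is the reduction in the first paragraph: one must verify that only configurations in which every distinct index has multiplicity at least two survive, and that the configurations with a multiplicity strictly larger than two --- which have strictly fewer distinct indices $j<k$ and hence a strictly smaller order $a^{-(2k-j)}$ (or $a^{-(2k-j)k_1}$) --- are not accumulated, through the finitely many $k$-dependent combinatorial multiplicities, into something larger than $O(a^{-\gamma k})$. One must also check that the $o(\cdot)$ remainders in Propositions \ref{prop.3.1}--\ref{prop.3.5} are absorbed into the $O(a^{-\gamma k})$ bound, which is automatic since only orders of magnitude are used. For $\DPG$ the extra wrinkle is extracting the polynomial decay rate from Assumption \ref{a.5.1}, which is precisely the role of the inequalities $jk_i\ge ik_j$.
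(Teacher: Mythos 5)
Your proposal is correct and follows essentially the same route as the paper's proof: a high-moment Chebyshev/Markov bound in which the expansion of $\mathbb{E}\big[(P(A)-H(A))^{m}\big]$ is reduced (via the centering and independence of the $\delta_{\theta_i}(A)-H(A)$) to configurations where every index has multiplicity at least two, the surviving joint weight moments are controlled by Propositions \ref{prop.3.1}--\ref{prop.3.5}, and the moment order is chosen large enough relative to $\tau$ (using $jk_i\ge ik_j$ in the $\DPG$ case) to make the probabilities summable for Borel--Cantelli. Your accounting of which multi-indices survive and your final intersection over $\epsilon=1/m$ merely make explicit what the paper leaves implicit.
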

Once we have the empirical strong law of large numbers for  $P$, we can deduce  the empirical Glivenko-Cantelli theorem for  $P$ 
(see e.g.  Theorem 20.6 in \citet{billingsley1995}  for a general discussion).
\begin{theorem}\label{Glivenko-Cantelli}
Let $(\mathbb{X},\mathcal{X})=(\mathbb{R},\mathcal{B}(\mathbb{R}))$. Let $P$ be
one of the Dirichlet 
process $\DP(a,H)$,  the Dirichlet process with general stick-breaking weights $\DPG(g_a,H)$  satisfying  Assumption \ref{a.5.1}, 
   the two-parameter Poisson-Dirichlet process \\$\PDP
   (a,b,H)$, the normalized inverse Gaussian process $\N-IGP(a,H)$, the normalized generalized gamma process $\NGGP(\sigma,a,H)$, 
   and  the  generalized Dirichlet process $\GDP(a,r,H)$.    Assume that $a=n^{\tau} $  for some arbitrarily fixed $\tau>0$.  Then, as $n\rightarrow\infty$,
\begin{align*}
\sup_{x\in \mathbb{R}} |P\left((-\infty,x]\right)- H\left((-\infty,x]\right)|\overset{a.s.}{\rightarrow} 0\,. 
\end{align*} 
\end{theorem}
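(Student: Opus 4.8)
The plan is to deduce the uniform statement from the pointwise convergence already established in Theorem \ref{law of large number}, via the classical blocking argument behind the Glivenko--Cantelli theorem (cf.\ Theorem 20.6 in \citet{billingsley1995}). The only structural fact needed beyond Theorem \ref{law of large number} is that the limiting distribution function $F(x):=H((-\infty,x])$ is continuous, which holds precisely because $H$ is nonatomic ($H(\{x\})=0$ for every $x$).

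Write $F_n(x)=P((-\infty,x])$, which depends on $n$ through $a=n^{\tau}$. For each fixed $\omega$, the function $F_n(\cdot)$ is nondecreasing with $F_n(-\infty)=0$ and $F_n(+\infty)=1$ (both $P$ and $H$ are probability measures), and $F$ is nondecreasing, continuous, with the same limits. Fix $m\in\NN$. By the intermediate value theorem I would choose points $-\infty=x_0<x_1<\dots<x_{m-1}<x_m=+\infty$ with $F(x_j)=j/m$ for $1\le j\le m-1$ (if $F$ is flat at the level $j/m$, any point in that flat region works). For $x\in[x_{j-1},x_j)$, monotonicity of $F_n$ and of $F$ gives
\begin{align*}
F_n(x)-F(x)&\le F_n(x_j)-F(x_{j-1})=\big(F_n(x_j)-F(x_j)\big)+\tfrac1m,\\
F_n(x)-F(x)&\ge F_n(x_{j-1})-F(x_j)=\big(F_n(x_{j-1})-F(x_{j-1})\big)-\tfrac1m,
\end{align*}
so that, writing $F_n(\pm\infty),F(\pm\infty)$ for the limiting values $0,1$ (for which the difference vanishes),
\[
\sup_{x\in\RR}|F_n(x)-F(x)|\le \max_{0\le j\le m}|F_n(x_j)-F(x_j)|+\tfrac1m .
\]

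Next I would apply Theorem \ref{law of large number} with $A=(-\infty,x_j]$ for each of the finitely many indices $j$: in each of the listed cases ($\DP$, $\DPG$ under Assumption \ref{a.5.1}, $\PDP$, $\N\text{-}\mathrm{IGP}$, $\NGGP$, $\GDP$) one has $P((-\infty,x_j])\to H((-\infty,x_j])$ almost surely as $n\to\infty$. Intersecting these finitely many almost-sure events yields $\max_{0\le j\le m}|F_n(x_j)-F(x_j)|\to0$ a.s., hence $\limsup_{n\to\infty}\sup_{x\in\RR}|F_n(x)-F(x)|\le 1/m$ a.s. Finally, intersecting over $m\in\NN$ (a countable intersection of almost-sure events) gives $\sup_{x\in\RR}|F_n(x)-F(x)|\to0$ a.s., which is the claim.

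I do not expect a genuine obstacle here: the substantive content is contained in Theorem \ref{law of large number}, and the remaining step is the standard partition-by-quantiles argument, which goes through verbatim because the atoms of $P$ are absorbed by the monotonicity bounds and the continuity of $F$ removes the need for the left-limit refinements of the general Glivenko--Cantelli theorem. The one point requiring a line of care is that the partition points $x_j$ must be fixed once and for all, independently of $n$ and $\omega$, so that only finitely many invocations of Theorem \ref{law of large number} are needed for each $m$; everything else is bookkeeping.
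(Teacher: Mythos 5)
Your proof is correct and is exactly the standard partition-by-quantiles argument that the paper itself invokes by deferring to Theorem 20.6 of \citet{billingsley1995}; the paper gives no further details, so your write-up simply makes explicit the same route (continuity of $F$ from nonatomicity of $H$, finitely many applications of Theorem \ref{law of large number}, monotonicity in between, then a countable intersection over $m$).
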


\subsection{Central limit theorems  and functional central limit theorems} 

In this subsection, we state the central limit theorems      corresponding to the strong law of large 
numbers of the form \eqref{e.2.9}.

We shall state the central limit theorems and
functional  central limit theorems for various processes 
 as the following  three theorems. 
The first one is for the Dirichlet processes with general iid stick-breaking weights defined   by Definition \ref{d.2.5}.
We will assume mild  convergence conditions on the 
stick-breaking  weights.  
\begin{theorem}\label{theorem 0.1}
Let $P$ be a stick-breaking process {
with general stick-breaking weights 
 $v_1, v_2, \cdots$ (whose distributions) depending on a parameter $a>0$ (we omit the explicit 
dependence on $a$ of the $v_i$'s). }  
 Let $D_a$   
and $Q_{H,a}$ be defined by  \eqref{1.2} and (\ref{1.16})
respectively. 
 Assume that the stick-breaking weights $v_1, v_2, \cdots  $ are iid and satisfy 
 the following two conditions. 
\begin{itemize}
     \item[(i)] For all $n \in \ZZ^+$, we have
     \begin{equation}
     \displaystyle \lim_{a \rightarrow \infty} \frac{\mathbb{E}[v_1^{n+1}]}{\mathbb{E}[v_1^n]}=0\,. \label{e.3.18} 
     \end{equation} 
     \item[(ii)] For any multi-index $(p_1, \cdots, p_k)$ such that $p_i \geq 2$ and   
     $  \frac{p_{1:k}}{2}>k $, where $p_{1:k}=\sum_{i=1}^k p_i$,   we have
     \begin{align}
     \lim_{a \rightarrow \infty}\frac{\left(\mathbb{E}[v_1]\right)^{\frac{p_{1:k}}{2}-k}\prod_{i=1}^k\mathbb{E}[v_1^{p_i}]}{\left(\mathbb{E}[v_1^2]\right)^{\frac{p_{1:k}}{2}}}=0.\label{0.10}
     \end{align}
\end{itemize}  
    Then we have the following results. 
  \begin{enumerate}
\item[(i)](Central limit theorem) Let $A_1,A_2,\cdots, A_n$ be any disjoint   measurable subsets of $\mathbb{X}$.  
Then, as $a \rightarrow \infty$,
\begin{equation}
\left(D_a(A_1),D_a(A_2),\cdots,D_a(A_n)\right) \overset{d}{\rightarrow} (X_1,X_2,\cdots,X_n)\,, \label{e.3.19} 
\end{equation}
where $(X_1,X_2,\cdots,X_n) \sim N (0, \Sigma)$ and $\Sigma
=(\si_{ij})_{1\le i,j\le n}$ is  given by 
\begin{align}
    \sigma_{ij}= \begin{cases}
    1 & \qquad \mbox{if $i=j$}\,,  \\
    -\sqrt{\frac{H(A_i)H(A_j)}{\left(1-H(A_i)\right)\left(1-H(A_j)\right)}}& \qquad \mbox{if $i \neq j$}\,.  \\
\end{cases} \label{e.2.24a}
\end{align}
\item[(ii)](Functional central limit theorem)
Let $(\XX, \cX)=(\RR^d, \cB(\RR^d))$ be the $d$-dimensional Euclidean space
with the Borel $\si$-algebra. Then 
\begin{equation}
Q_{H,a}  \stackrel{{\rm weakly}}{\rightarrow }  \quad {B_H^o}\qquad  {\rm in}\qquad D(\RR ^d)\  \label{e.2.24} 
\end{equation}  
with respect to the Skorohod topology. 
\end{enumerate} 
\end{theorem}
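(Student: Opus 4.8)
The plan is to prove the finite-dimensional central limit theorem (i) by the method of moments, fed by the moment asymptotics of Proposition~\ref{prop.3.1}, and then to deduce the functional statement (ii) from (i) together with a tightness estimate. Throughout I would set $\xi_i(A):=\delta_{\theta_i}(A)-H(A)$, so that $P(A)-H(A)=\sum_{i\ge1}w_i\xi_i(A)$ where $\{\xi_i\}_{i\ge1}$ are iid, independent of $\{w_i\}_{i\ge1}$, with $\mathbb{E}[\xi_i(A)]=0$, $\mathbb{E}[\xi_i(A)\xi_i(B)]=H(A\cap B)-H(A)H(B)$ and $|\xi_i(A)|\le1$. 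All moments of $D_a(A_j)$ are then finite, and by \eqref{0.4} with $k=1$ together with condition~(i) the normalising quantity $\mathbb{E}\big[\sum_n w_n^2\big]$ is strictly positive and tends to $0$ as $a\to\infty$.

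For (i), I would expand, for any multi-index $(m_1,\dots,m_n)$ with $M:=m_1+\dots+m_n$, the raw joint moment $\mathbb{E}\big[\prod_{l}(P(A_l)-H(A_l))^{m_l}\big]$ by writing each factor as $\sum_i w_i\xi_i(A_l)$, interchanging sum and expectation (justified by $\sum_i w_i=1$), and factoring the resulting $\xi$-expectation over the \emph{distinct} index values, using that the $\theta_i$ are iid and independent of the $w_i$. A term in which $k$ distinct indices occur, with multiplicities $p_1,\dots,p_k$ ($\sum_j p_j=M$), then contributes a product of one-point $\xi$-moments times $\mathbb{E}\big[\sum_{i_1<\cdots<i_k}w_{i_1}^{p_1}\cdots w_{i_k}^{p_k}\big]$; an index of multiplicity $1$ annihilates the term, so $k\le\lfloor M/2\rfloor$. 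Normalising by the constants defining $D_a$ and passing to the limit: by \eqref{0.3} the $w$-moment is $O\big(\prod_j\mathbb{E}[v_1^{p_j}]\,(\mathbb{E}[v_1])^{-k}\big)$, whereas the normalisation supplies $(\mathbb{E}[\sum_n w_n^2])^{M/2}$, of order $(\mathbb{E}[v_1^2]/\mathbb{E}[v_1])^{M/2}$ by \eqref{0.4}; hence any term with $k<M/2$ vanishes in the limit by condition~(ii) (applicable precisely because $p_{1:k}/2=M/2>k$). In particular all odd-order moments vanish, and for $M=2m$ only the ``pair-partition'' terms survive ($k=m$, all $p_j=2$): by \eqref{0.4} their $w$-part equals $\tfrac{1}{2^m m!}(\mathbb{E}[v_1^2]/\mathbb{E}[v_1])^m(1+o(1))$, and after dividing by $(\mathbb{E}[\sum_n w_n^2])^m\sim 2^{-m}(\mathbb{E}[v_1^2]/\mathbb{E}[v_1])^m$ and letting the $m!$ ways of attaching the $m$ indices to the $m$ pairs cancel the $m!$ in the denominator, the $w$-contribution reduces to $1$; the surviving $\xi$-factor, divided by $\prod_l(H(A_l)(1-H(A_l)))^{m_l/2}$, is then, for each pair of slots of types $l,l'$, exactly the entry $\sigma_{ll'}$ of \eqref{e.2.24a}. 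Summing over all pairings gives Isserlis' formula, i.e.\ the mixed moments of $N(0,\Sigma)$, and since the multivariate normal law is determined by its moments, \eqref{e.3.19} follows.

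For (ii), note that $Q_{H,a}$ equals the signed measure $P-H$ multiplied by the deterministic scalar $(\mathbb{E}[\sum_n w_n^2])^{-1/2}$ and is therefore additive; hence for arbitrary $A_1,\dots,A_m$ each $Q_{H,a}(A_l)$ is a fixed linear combination of the values of $Q_{H,a}$ on the atoms of the algebra generated by $A_1,\dots,A_m$, and convergence of the finite-dimensional distributions of $Q_{H,a}$ to those of $B_H^o$ follows from (i) and the continuous mapping theorem, once one checks by a short computation that the limiting Gaussian vector has covariance $H(\cdot\cap\cdot)-H(\cdot)H(\cdot)$. For tightness on $D(\RR^d)$ in the Skorohod topology I would verify a Bickel--Wichura block moment criterion (cf.\ \citet{bickel1971}, \citet{straf1972}): one has $\mathbb{E}[Q_{H,a}(B)^2]=H(B)(1-H(B))$ exactly (from \eqref{1.112}), and for neighbouring blocks $B,C$ the same type of expansion shows that the leading pair-partition contribution to $\mathbb{E}[Q_{H,a}(B)^2Q_{H,a}(C)^2]$ is of order $H(B)H(C)\le\tfrac{1}{4}(H(B\cup C))^2$, with constant uniform in large $a$, while every sub-leading term (three or more slots on one index, or only one distinct index) is negligible by Proposition~\ref{prop.3.1} and condition~(ii); this gives $\mathbb{E}[Q_{H,a}(B)^2Q_{H,a}(C)^2]\le K(H(B\cup C))^2$ uniformly in $a\ge a_0$, the required condition with exponent $2>1$. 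Combining tightness with the finite-dimensional convergence yields \eqref{e.2.24}.

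The main obstacle I anticipate is the tightness estimate: unlike the pointwise limits in (i), the bound on $\mathbb{E}[Q_{H,a}(B)^2Q_{H,a}(C)^2]$ must hold \emph{uniformly} in $a$, so one has to carry the error terms of Proposition~\ref{prop.3.1} through the combinatorial expansion and use condition~(ii) at full strength to absorb every partition with fewer than the maximal number of distinct indices. In addition, setting up a block criterion on $D(\RR^d)$ with an unbounded index set, and reducing to a base measure with continuous one-dimensional marginals, requires the standard compactification and coordinate-change preliminaries, which I would dispose of before invoking the criterion.
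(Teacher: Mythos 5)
Your proposal follows essentially the same route as the paper: part (i) is proved by the method of moments, expanding $P(A_l)-H(A_l)=\sum_i w_i(\delta_{\theta_i}(A_l)-H(A_l))$, killing terms with an index of multiplicity one, using condition (ii) together with Proposition \ref{prop.3.1} to discard all partitions with fewer than $M/2$ distinct indices, and matching the surviving pair-partition terms with the Gaussian (Isserlis) moments; part (ii) is finite-dimensional convergence plus the Bickel--Wichura fourth-moment tightness criterion, exactly as in the paper (which takes $\gamma_1=\gamma_2=2$, $\beta_1=\beta_2=1$, $\mu=2H$). The only cosmetic differences are that the paper closes part (i) via Cram\'er--Wold and the one-dimensional moment theorem rather than multivariate moment determinacy, and writes the tightness bound as $\mu(A)\mu(B)$ rather than $K(H(B\cup C))^2$ — both immaterial.
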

\begin{remark} For central limit theorem we use $D_a$ 
because each component converges to a standard Gaussian. 
For functional central limit theorem we use $Q_{H,a}$
since it converges to a Brownian bridge with parameter $H$. 
We can presumably use  $D_a$ 
(or  $Q_{H,a}$) in both \eqref{e.2.24a} and \eqref{e.2.24}
with a scaling.  
\end{remark} 

%

The  conditions (i) and (ii) in Theorem \ref{theorem 0.1}  are  implied 
 by many other conditions. One  of them  is given below.
 \begin{remark}\label{remark1} 
 Assumption \ref{a.5.1} implies the conditions (i) and (ii) in Theorem \ref{theorem 0.1}. 
\end{remark}
\begin{proof}
It is obviously  
that $\{k_p\}$ is an increasing sequence, and thus the condition (i)
of  Theorem \ref{theorem 0.1}
 (i.e \eqref{e.3.18}) holds. 

For any nonnegative  integer $m$, let $\mathfrak{N}$ be  a 
certain collection of integers $j's$ such that $\sum_{j\in \mathfrak{N}}j=m$. The condition (ii) in Theorem \ref{theorem 0.1} is equivalent to the following statement:

If $j \geq 2$  and
 $|\mathfrak{N|<\frac{m}{2}}$,  then 
     \begin{align}
     \frac{m(k_2-k_1)}{2}<\sum_{j\in \mathfrak{N}}k_j-|\mathfrak{N}|k_1 .  \label{0.17}
     \end{align} 
Thus, to prove \eqref{0.10}  it is sufficient to show $\frac{mk_2}{2}<\sum_{j\in \mathfrak{N}}k_j$.  This is  a simple
consequence of   $jk_i \geq ik_j$ for $i \geq j$. In fact, taking  $i=2$,  we have for all  $j \geq 2$,  $2k_j \geq jk_2$ holds and thus  we have
$\sum_{j\in \mathfrak{N}}2k_j \geq \sum_{j\in \mathfrak{N}}jk_2$, which implies $\frac{mk_2}{2}<\sum_{j\in \mathfrak{N}}k_j$. Hence we have \eqref{0.17}. 
\end{proof}

The conditions (i) and (ii) in  Theorem \ref{theorem 0.1} are satisfied by many 
  interesting processes including the Dirichlet process.  We give three examples to illustrate the applicability  of our above theorem. 
\begin{corollary}\label{ex.3.7} 
Let $P \sim \DP(a,H)$ be  defined  as  in Definition \ref{d.2.3},  namely, $v_i \overset{iid}{\sim} \hbox{\rm Beta}(1,a)$.  Let $D_a$   
and $Q_{H,a}$ be defined by  \eqref{1.2} and (\ref{1.16})
respectively.  We have the following results. 
 \begin{enumerate}
\item[(i)] Let $A_1,A_2,\cdots, A_n$ be any disjoint   measurable subsets of $\mathbb{X}$.  
Then, as $a \rightarrow \infty$,
\begin{equation}
\left(D_a(A_1),D_a(A_2),\cdots,D_a(A_n)\right) \overset{d}{\rightarrow} (X_1,X_2,\cdots,X_n)\,, \label{e.2.27} 
\end{equation} 
where $(X_1,X_2,\cdots,X_n) \sim N (0, \Sigma)$ and $\Sigma
=(\si_{ij})_{1\le i,j\le n}$ is  given by 
\begin{align} 
    \sigma_{ij}= \begin{cases}
    1 & \qquad \mbox{if $i=j$}\,,  \\
    -\sqrt{\frac{H(A_i)H(A_j)}{\left(1-H(A_i)\right)\left(1-H(A_j)\right)}}& \qquad \mbox{if $i \neq j$}\,.  \\
\end{cases} 
\end{align}
\item[(ii)] 
Let $(\XX, \cX)=(\RR^d, \cB(\RR^d))$ be the $d$-dimensional Euclidean space
with the Borel $\si$-algebra. Then 
\begin{equation}
Q_{H,a}  \stackrel{{\rm weakly}}{\rightarrow }  \quad {B_H^o}\qquad  {\rm in}\qquad D(\RR ^d)\ 
\end{equation}  
with respect to the Skorohod topology.
\end{enumerate} 
\end{corollary}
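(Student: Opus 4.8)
\textbf{Proof proposal for Corollary \ref{ex.3.7}.}

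The plan is to recognize that Corollary \ref{ex.3.7} is precisely the specialization of Theorem \ref{theorem 0.1} to the stick-breaking weights $v_i \overset{iid}{\sim} \Beta(1,a)$, so that the whole task reduces to verifying that these particular weights satisfy hypotheses (i) and (ii) of that theorem. Once this is done, parts (i) and (ii) of the corollary are immediate consequences of parts (i) and (ii) of Theorem \ref{theorem 0.1}, with the same limiting covariance matrix $\Sigma$ as in \eqref{e.2.24a}.

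First I would compute the moments of $v_1$. Since $v_1 \sim \Beta(1,a)$, for any nonnegative integer $p$,
\[
\mathbb{E}[v_1^p] = \frac{\Gamma(1+p)\,\Gamma(1+a)}{\Gamma(1+a+p)} = \frac{p!}{(a+1)_p} = \frac{p!}{(a+1)(a+2)\cdots(a+p)},
\]
so in particular $\mathbb{E}[v_1] = \frac{1}{a+1}$, $\mathbb{E}[v_1^2] = \frac{2}{(a+1)(a+2)}$, and, expanding $(a+1)_p = a^p(1+O(1/a))$, one gets $\mathbb{E}[v_1^p] = \frac{p!}{a^p} + o\!\left(\frac{1}{a^p}\right)$ as $a\to\infty$.

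Next I would check the two conditions of Theorem \ref{theorem 0.1}. Condition \eqref{e.3.18} is immediate, since $\mathbb{E}[v_1^{n+1}]/\mathbb{E}[v_1^n] = (n+1)/(a+n+1) \to 0$. For condition \eqref{0.10}, substituting the asymptotics $\mathbb{E}[v_1]\sim a^{-1}$, $\mathbb{E}[v_1^{p_i}]\sim p_i!\,a^{-p_i}$ and $\mathbb{E}[v_1^2]\sim 2a^{-2}$ gives, for any multi-index with $p_i\ge 2$ and $\tfrac{p_{1:k}}{2}>k$,
\[
\frac{\left(\mathbb{E}[v_1]\right)^{\frac{p_{1:k}}{2}-k}\prod_{i=1}^k\mathbb{E}[v_1^{p_i}]}{\left(\mathbb{E}[v_1^2]\right)^{\frac{p_{1:k}}{2}}} \;\sim\; \frac{\prod_{i=1}^k p_i!}{2^{p_{1:k}/2}}\; a^{-\left(\frac{p_{1:k}}{2}-k\right)} \;\longrightarrow\; 0,
\]
because the exponent is strictly negative. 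Alternatively, and perhaps more transparently, the expansion $\mathbb{E}[v_1^p] = p!\,a^{-p} + o(a^{-p})$ shows that Assumption \ref{a.5.1} is satisfied with $C_p = p!$ and $k_p = p$; here the monotonicity requirement $jk_i \ge ik_j$ for $i\ge j$ reads $ij\ge ij$, which holds trivially, so Remark \ref{remark1} already delivers conditions (i) and (ii) of Theorem \ref{theorem 0.1}.

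With the hypotheses verified, I would conclude by invoking Theorem \ref{theorem 0.1}: part (i) gives the finite-dimensional central limit theorem \eqref{e.2.27} with the stated covariance matrix, and part (ii) gives the functional central limit theorem $Q_{H,a} \stackrel{\rm weakly}{\rightarrow} B_H^o$ on $D(\RR^d)$ with respect to the Skorohod topology. There is essentially no genuine obstacle here: the only content is the elementary Beta-moment computation and the power-of-$a$ bookkeeping above, and the one thing worth double-checking is that the normalizations in \eqref{1.2} and \eqref{1.16} used in the corollary coincide with those in Theorem \ref{theorem 0.1} (they do, since both statements are phrased for a general stick-breaking process), so that the limiting covariance \eqref{e.2.24a} carries over unchanged.
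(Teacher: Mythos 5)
Your proposal is correct and follows essentially the same route as the paper: the paper's proof also computes $\mathbb{E}[v_i^p]=\frac{p!}{(a+1)\cdots(a+p)}=\frac{p!}{a^p}+o(a^{-p})$, identifies $C_p=p!$ and $k_p=p$ in Assumption \ref{a.5.1}, and then invokes Theorem \ref{theorem 0.1} via Remark \ref{remark1}. Your additional direct verification of conditions \eqref{e.3.18} and \eqref{0.10} is a harmless (and correct) redundancy.
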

\begin{proof}
It is sufficient to verify the condition 
\eqref{e.3.27} in Assumption \ref{a.5.1}. Since $v_i \overset{iid}{\sim} \hbox{Beta}(1,a)$, we have
for any positive integer $p$,
\begin{align}\nonumber
	&\mathbb{E}[v_i^p]=\frac{\Gamma(a+1)\Gamma(p+1)}{\Gamma(1)\Gamma(a+p+1)}=\frac{p!}{(a+1)\cdots(a+p)}=\frac{p!}{a^p}+o\left(\frac{1}{a^p}\right).
\end{align}
Hence, $k_p=p$ and $C_p=p!$. Obviously, for $i \geq j$, $jk_i \geq ik_j$   always holds true.
\end{proof}
\begin{remark}
Since  the posterior of the Dirichlet process is still a Dirichlet process, the above result  can be applied to  the posterior process in the  Bayesian nonparametric models   when the prior is the Dirichlet process   for the following situations:
(i) with large sample size and finite parameter $a$;  (ii)  with large 
parameter  $a$ and finite sample size, (iii)  
with parameter  $a$ and sample size  both large. 
\end{remark}

The assumption  of the   $\Beta(1, a)$-distribution 
in Corollary \ref{ex.3.7} 
can be replaced   by a general $\Beta(\rho_a, a)$, where  
$\rho_a/a\rightarrow 0$. In fact,  in this case,   we have 
\[
\EE[v_1^n]=\left(\frac{\rho_a}{a}\right)^n+o\left(\left(\frac{\rho_a}{a}\right)^n\right)\,. 
\]
It is easy to verify that
 the conditions \eqref{e.3.18}-\eqref{0.10} in 
 Theorem \ref{theorem 0.1} are satisfied. Thus we have
\begin{corollary}\label{c.3.20} 
Let $P \sim \DPG(g_a,H)$ be  defined  as  
in Definition \ref{d.2.5},  where  $v_i \overset{iid}{\sim} \hbox{\rm Beta}(\rho_a,a)$
with $\displaystyle \lim_{a\rightarrow \infty}\frac{\rho_a} a=0$.
  Let $D_a$   
and $Q_{H,a}$ be defined by  \eqref{1.2} and (\ref{1.16})
respectively.  
We have the following results. 
 \begin{enumerate}
\item[(i)] Let $A_1,A_2,\cdots, A_n$ be any disjoint   measurable subsets of $\mathbb{X}$.  
Then, as $a \rightarrow \infty$,
\begin{equation}
\left(D_a(A_1),D_a(A_2),\cdots,D_a(A_n)\right) \overset{d}{\rightarrow} (X_1,X_2,\cdots,X_n)\,, \label{e.2.27.1} 
\end{equation} 
where $(X_1,X_2,\cdots,X_n) \sim N (0, \Sigma)$ and $\Sigma
=(\si_{ij})_{1\le i,j\le n}$ is  given by 
\begin{align} 
    \sigma_{ij}= \begin{cases}
    1 & \qquad \mbox{if $i=j$}\,,  \\
    -\sqrt{\frac{H(A_i)H(A_j)}{\left(1-H(A_i)\right)\left(1-H(A_j)\right)}}& \qquad \mbox{if $i \neq j$}\,.  \\
\end{cases} 
\end{align}
\item[(ii)] 
Let $(\XX, \cX)=(\RR^d, \cB(\RR^d))$ be the $d$-dimensional Euclidean space
with the Borel $\si$-algebra. Then  
\begin{equation}
Q_{H,a}  \stackrel{{\rm weakly}}{\rightarrow }  \quad {B_H^o}\qquad  {\rm in}\qquad D(\RR ^d)\ 
\end{equation}  
with respect to the Skorohod topology.  
\end{enumerate}  
\end{corollary}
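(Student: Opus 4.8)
The plan is to derive both the central limit theorem~\eqref{e.2.27.1} and the functional central limit theorem directly from Theorem~\ref{theorem 0.1}: that theorem already carries out the passage to the Gaussian limit and the Skorohod-space tightness, so all that remains is to check that the iid weights $v_i\sim\Beta(\rho_a,a)$ verify its two hypotheses~\eqref{e.3.18} and~\eqref{0.10}. (Routing instead through Assumption~\ref{a.5.1} and Remark~\ref{remark1} is not available in general, since for an arbitrary $\rho_a$ with $\rho_a/a\to0$ the moments of $v_1$ need not admit a clean power-of-$a$ expansion, e.g.\ when $\rho_a\sim\log a$.) The first step is to record the moments in closed form from the Beta integral,
\[
\EE[v_1^n]=\frac{B(\rho_a+n,a)}{B(\rho_a,a)}=\frac{(\rho_a)_n}{(\rho_a+a)_n},\qquad n=0,1,2,\dots,
\]
with $(x)_n=x(x+1)\cdots(x+n-1)$; since $\rho_a=o(a)$ one has $(\rho_a+a)_n=a^n(1+o(1))$, so $\EE[v_1^n]$ is of order $(\rho_a)_n/a^n$, in agreement with the asymptotics stated above (the constant equals $1$ when $\rho_a\to\infty$), and $v_1$ is clearly not identically $0$. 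Condition~\eqref{e.3.18} is then immediate from $\EE[v_1^{n+1}]/\EE[v_1^n]=(\rho_a+n)/(\rho_a+a+n)$, whose numerator is $o(a)$ and whose denominator is $\sim a$.

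The substantive step is condition~\eqref{0.10}. I would fix a multi-index $(p_1,\dots,p_k)$ with each $p_i\ge2$, set $s:=p_{1:k}/2>k$, and substitute the exact moments into the quotient in~\eqref{0.10}. Writing $(\rho_a)_{p_i}=\rho_a(\rho_a+1)\cdots(\rho_a+p_i-1)$, all powers of $\rho_a$ cancel between $\prod_i\EE[v_1^{p_i}]$, $(\EE[v_1])^{s-k}$ and $(\EE[v_1^2])^{s}$; using also $\prod_{i=1}^k(\rho_a+a)_{p_i}=(\rho_a+a)^{2s}(1+o(1))$ and $(\rho_a+a+1)^{s}=(\rho_a+a)^{s}(1+o(1))$, the quotient collapses, up to a factor $1+o(1)$, to
\[
\frac{\prod_{i=1}^{k}\bigl[(\rho_a+1)(\rho_a+2)\cdots(\rho_a+p_i-1)\bigr]}{(\rho_a+1)^{s}}\cdot\frac{1}{(\rho_a+a)^{s-k}}\,.
\]
The numerator of the first factor is a product of $\sum_i(p_i-1)=2s-k$ terms, each at most $\rho_a+p_{1:k}$, and since $2s-k>s$ this first factor is bounded by $(p_{1:k})^{s}(\rho_a+p_{1:k})^{s-k}$; hence the whole quotient is $\le(p_{1:k})^{s}\bigl((\rho_a+p_{1:k})/(\rho_a+a)\bigr)^{s-k}(1+o(1))$, which tends to $0$ because $s-k\ge1$ and $(\rho_a+p_{1:k})/(\rho_a+a)\to0$.

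With~\eqref{e.3.18} and~\eqref{0.10} verified, Theorem~\ref{theorem 0.1} applies and gives both assertions---the multivariate normal limit with covariance $\Sigma$, and $Q_{H,a}\stackrel{{\rm weakly}}{\rightarrow}B_H^o$ in $D(\RR^d)$ for the Skorohod topology. I expect the one genuinely delicate point to be the reduction performed for~\eqref{0.10}: one must track precisely how many $(\rho_a+\cdot)$ factors survive in the numerator against the denominator $(\EE[v_1^2])^{s}$, and confirm that the final bound decays uniformly across all regimes of $\rho_a$ (bounded, slowly diverging, or diverging at the rate of $a$), which is exactly where the hypotheses $\rho_a/a\to0$ and $s>k$ are used.
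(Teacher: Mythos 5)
Your proof is correct and follows essentially the same route as the paper: the paper likewise reduces the corollary to verifying conditions (i) and (ii) of Theorem~\ref{theorem 0.1} from the Beta moments, but it only records the asymptotic $\EE[v_1^n]=(\rho_a/a)^n+o\bigl((\rho_a/a)^n\bigr)$ (which, as you note, has leading constant $1$ only when $\rho_a\to\infty$) and declares the verification easy, whereas you carry it out with the exact Pochhammer formula $\EE[v_1^n]=(\rho_a)_n/(\rho_a+a)_n$ uniformly over all regimes of $\rho_a$. One cosmetic point: since $p_{1:k}$ may be odd, $s-k=p_{1:k}/2-k$ need only be positive (possibly $\tfrac12$), not $\ge 1$; this is harmless because $\bigl((\rho_a+p_{1:k})/(\rho_a+a)\bigr)^{s-k}\to 0$ for any $s-k>0$, so your conclusion stands.
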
 
\begin{remark} It is not clear yet what is the finite dimensional distribution of stick-breaking process $P$ if the corresponding 
stick-breaking weights $v_i\stackrel{ind}{\sim } \Beta(\rho_a, a)$. 
\end{remark}

The next corollary  is about the asymptotic behaviour of  the  prior $P$, when the corresponding stick-breaking weights $v_i$ follow a linear combination of Beta distributions, whose  precise meaning is give below. 
\begin{definition} Let $s$ be any positive integer  and 
let $\{r_1, \cdots, r_s\} $  and $\{t_1, \cdots, t_s\} $ be two sets of  positive real numbers such that   $\sum_{\ell=1}^s t_\ell =1$.   Let $u_{1, 1}, \cdots, 
u_{1, s}, u_{2,1},\\
 \cdots, u_{2,s}, \cdots $ be independent and 
let $u_{i, \ell}  {\sim} \text{\rm Beta}(1,a^{r_\ell })\,, i=1, 2, \cdots, \ell=1, \cdots, s$.  Then the random variables
\begin{equation}
	v_i=\sum_{\ell =1}^s t_{ \ell}u_{i, \ell}  \,, \quad i=1, 2, \cdots, 
	\label{e.3.29}
	\end{equation} 
are  called   linear combinations of the beta distributions. 
\end{definition} 

\begin{corollary}
Let $P$ be the stick-breaking process as defined in Definition
 \ref{d.2.1}, where the weight $v_i$ is  a linear combination of the beta distributions
defined by \eqref{e.3.29}. 
  Let $D_a$   
and $Q_{H,a}$ be defined by  \eqref{1.2} and (\ref{1.16})
respectively.        Then the following statements hold true. 
 \begin{enumerate}
\item[(i)] Let $A_1,A_2,\cdots, A_n$ be any disjoint   measurable subsets of $\mathbb{X}$.  
Then, as $a \rightarrow \infty$,
\begin{equation}
\left(D_a(A_1),D_a(A_2),\cdots,D_a(A_n)\right) \overset{d}{\rightarrow} (X_1,X_2,\cdots,X_n)\,, \label{e.2.31} 
\end{equation} 
where $(X_1,X_2,\cdots,X_n) \sim N (0, \Sigma)$ and $\Sigma
=(\si_{ij})_{1\le i,j\le n}$ is  given by 
\begin{align}
    \sigma_{ij}= \begin{cases}
    1 & \qquad \mbox{if $i=j$}\,,  \\
    -\sqrt{\frac{H(A_i)H(A_j)}{\left(1-H(A_i)\right)\left(1-H(A_j)\right)}}& \qquad \mbox{if $i \neq j$}\,.  \\
\end{cases} 
\end{align}
\item[(ii)] 
Let $(\XX, \cX)=(\RR^d, \cB(\RR^d))$ be the $d$-dimensional Euclidean space
with the Borel $\si$-algebra. Then 
\begin{equation}
Q_{H,a}  \stackrel{{\rm weakly}}{\rightarrow }  \quad {B_H^o}\qquad  {\rm in}\qquad D(\RR ^d)\ 
\end{equation}  
with respect to the Skorohod topology. 
\end{enumerate} 

\end{corollary}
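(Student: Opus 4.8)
The plan is to deduce the corollary directly from Theorem \ref{theorem 0.1}, exactly as Corollaries \ref{ex.3.7} and \ref{c.3.20} were obtained: one only has to check that the weights $v_i$ of \eqref{e.3.29} verify hypotheses (i) and (ii) of that theorem. By Remark \ref{remark1} these hypotheses follow from Assumption \ref{a.5.1}, so it suffices to produce the power-moment expansion $\mathbb{E}[v_1^{\,p}]=C_p a^{-k_p}+o(a^{-k_p})$ for a positive sequence $k_p$ with $jk_i\ge ik_j$ whenever $i\ge j$. A short preliminary remark should note that $v_i\in[0,1]$ (since $t_\ell>0$, $\sum_\ell t_\ell=1$ and $u_{i,\ell}\in[0,1]$) and that the $v_i$ are iid and not a.s.\ zero, so that $P$ is a well-defined stick-breaking process.

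The one genuine computation is the asymptotics of $\mathbb{E}[v_1^{\,p}]$. I would start from $\mathbb{E}[u^p]=p!/\big[(b+1)\cdots(b+p)\big]$ for $u\sim\Beta(1,b)$, which gives $\mathbb{E}[u_{1,\ell}^{\,p}]=p!\,a^{-pr_\ell}\big(1+o(1)\big)$ as $a\to\infty$. Expanding $v_1^{\,p}=\big(\sum_{\ell}t_\ell u_{1,\ell}\big)^p$ by the multinomial theorem and taking expectations, using independence of $u_{1,1},\dots,u_{1,s}$, each multi-index $\alpha$ with $|\alpha|=p$ contributes a term of order $a^{-\sum_\ell \alpha_\ell r_\ell}$. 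Writing $r_*=\min_\ell r_\ell$ and $S=\{\ell:\ r_\ell=r_*\}$, one has $\sum_\ell\alpha_\ell r_\ell\ge p r_*$ with equality exactly when $\mathrm{supp}\,\alpha\subseteq S$; collecting the leading terms yields
\[
\mathbb{E}[v_1^{\,p}]=C_p\,a^{-p r_*}+o\big(a^{-p r_*}\big),\qquad C_p=p!\!\!\sum_{\substack{|\alpha|=p\\ \mathrm{supp}\,\alpha\subseteq S}}\prod_{\ell\in S}t_\ell^{\alpha_\ell}\in(0,\infty).
\]
Thus Assumption \ref{a.5.1} holds with $k_p=pr_*$, and the condition $jk_i\ge ik_j$ is the identity $ijr_*=ijr_*$.

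With this in hand I would invoke Remark \ref{remark1} to conclude that \eqref{e.3.18} and \eqref{0.10} hold; alternatively these can be checked on the spot, since the ratio in \eqref{e.3.18} equals $(C_{p+1}/C_p)a^{-r_*}(1+o(1))\to0$ and the expression in \eqref{0.10} is of order $a^{-r_*(p_{1:k}/2-k)}\to0$ when $p_{1:k}/2>k$. Theorem \ref{theorem 0.1} then yields both conclusions of the corollary, the covariance matrix $\Sigma$ and the limit $B_H^o$ being literally those of Theorem \ref{theorem 0.1}.

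The only point needing a little care is the bookkeeping in the multinomial expansion — verifying that precisely the multi-indices supported on $S$ dominate at order $a^{-pr_*}$, and that the leading constant $C_p$ is strictly positive (immediate, as all $t_\ell>0$ and the sum is finite) so that the divisions by $\mathbb{E}[v_1]$ and $\mathbb{E}[v_1^2]$ in \eqref{0.10} are legitimate. No other difficulty arises; everything else is substitution into the already-established Theorem \ref{theorem 0.1}.
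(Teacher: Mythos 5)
Your proof is correct and follows essentially the same route as the paper: expand $\mathbb{E}[v_1^p]$ by the multinomial theorem using independence of the $u_{1,\ell}$, identify the leading order $a^{-pr_*}$ with $r_*=\min_\ell r_\ell$, verify Assumption \ref{a.5.1} with $k_p=pr_*$, and invoke Theorem \ref{theorem 0.1} via Remark \ref{remark1}. In fact your bookkeeping is slightly more careful than the paper's, which writes the leading constant as $t^pp!$ and thus implicitly treats the minimizing index as unique, whereas your $C_p=p!\sum_{|\alpha|=p,\ \mathrm{supp}\,\alpha\subseteq S}\prod_{\ell\in S}t_\ell^{\alpha_\ell}$ handles ties correctly.
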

\begin{proof}
By the independence of $ \{u_{i, \ell}\}_{\ell=1}^s$, we can compute  the $p$-th moment of $v_i$ as follows. 
\begin{align}
	\nonumber
	&\mathbb{E}[v_i^p]=\mathbb{E}\left[\left(\sum_{\ell=1}^s t_{\ell}u_{i, \ell}\right)^p\right]=\sum_{{q_1, \cdots, q_s\in \ZZ_+}\atop {q_1+\cdots+q_s=p}} \binom{p}{q_1, \cdots, q_s} \prod_{\ell=1}^s \mathbb{E}\left[(t_{\ell}u_{i, \ell})^{q_\ell}\right]\\\nonumber
	&=\sum_{{q_1, \cdots, q_s\in \ZZ_+}\atop {q_1+\cdots+q_s=p}} \binom{p}{q_1, \cdots, q_s} \prod_{\ell=1}^s t_{\ell}^{q_\ell}\left(\frac{q_\ell!}{a^{q_\ell r_\ell}}+o\left(\frac{1}{a^{q_\ell r_\ell }}\right)\right)=\frac{t^pp!}{a^{pr}}+o\left(\frac{1}{a^{pr}}\right)\,, 
\end{align}
where $r=\min(r_1, \cdots, r_s)$.   
Taking  $k_p=pr$ and $C_{p}=t^pp!$ in Assumption \ref{a.5.1} we see the condition  $i \geq j$, $jk_i \geq ik_j$ is always verified.
\end{proof}

\begin{remark}
Let us return to  Corollary 
 \ref{ex.3.7}.  This is a typical case  and we take a close look of the 
  density  $f_a(x)=a(1-x)^{a-1}$, $0\le x\le 1$, of the Beta distribution $\Beta(1,a)$. 

For any continuous  function $g:\RR\rightarrow \RR$,  it is easy to verify that 
\[
\int_{\RR} \left[g(x)-g(1)\right] f_a(x) dx=\int_0^1 \left[g(x)-g(1)\right] f_a(x) dx\rightarrow 0 \quad \hbox{as $a\rightarrow \infty$}\,. 
\]
This means that  $\int_{\RR} g(x)  f_a(x) dx \rightarrow g(1)$. In other word,
$f_a$ converges to the Dirac delta function $\delta(x -1)$.
This observation hints that when the  distribution $f_a$ of $v_i$'s 
converges to the Dirac delta function $\delta(x-1)$, or the random variable 
$v_i$ converges in distribution to $1$ (as $a\rightarrow \infty$)
we should have the convergence of the random process $Q_{H,a}$. 
But we still need to impose some more technical conditions. We give a further illustration by the following corollary. 
\end{remark}

\begin{corollary}
Let the stick-breaking process $P$ be defined as in Definition \ref{d.2.1},  where  the corresponding $v_i$ follows the  following distribution:
\begin{equation}
\nonumber
    f_b(x)=  \begin{cases}
    \frac{1-g(b)}{b} & \mbox{if $0<x\leq b$}; \\
    \frac{g(b)}{1-b} & \mbox{if $b < x \leq 1$}, \\
\end{cases} 
\end{equation}
where $g(b)=e^{-\frac{1}{b^\epsilon}}$, for a certain  arbitrarily fixed $ \epsilon>0$. Then, as $b \rightarrow 0$, the conditions
\eqref{e.3.18} and \eqref{0.10} of  Theorem \ref{theorem 0.1} hold
with $a=\frac1b$.  
Thus the statements  \eqref{e.3.19} and \eqref{e.2.24} 
of  Theorem \ref{theorem 0.1} hold true. 
\end{corollary}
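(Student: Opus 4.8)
The plan is to reduce the verification entirely to Assumption~\ref{a.5.1}: once the moments $\mathbb{E}[v_1^p]$ are shown to have the form \eqref{e.3.27} (with $a=1/b$), Remark~\ref{remark1} supplies conditions \eqref{e.3.18} and \eqref{0.10}, and Theorem~\ref{theorem 0.1} then yields \eqref{e.3.19} and \eqref{e.2.24}. So the only real computation is the small-$b$ asymptotics of the moments of $v_1$ under $f_b$.

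First I would check that $f_b$ is a genuine probability density ($\int_0^b\frac{1-g(b)}{b}\,dx+\int_b^1\frac{g(b)}{1-b}\,dx=(1-g(b))+g(b)=1$), so that the $v_i$ are well-defined iid random variables on $(0,1]$ and $P$ is well-defined. Then for each $n\in\ZZ_+$ a direct integration gives
\[
\mathbb{E}[v_1^n]=\frac{(1-g(b))\,b^n}{n+1}+\frac{g(b)\,(1-b^{n+1})}{(n+1)(1-b)}.
\]
The crucial observation is that $g(b)=e^{-1/b^{\epsilon}}$ decays faster than any power of $b$ as $b\to 0$, hence $g(b)=o(b^n)$ for every $n$; so the second term is negligible and $\mathbb{E}[v_1^n]=\frac{b^n}{n+1}+o(b^n)$. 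Writing $a=1/b$, this is exactly \eqref{e.3.27} with $k_p=p$ and $C_p=\frac{1}{p+1}$, and the monotonicity condition $jk_i\ge ik_j$ for $i\ge j$ holds trivially (with equality). Thus Assumption~\ref{a.5.1} holds, and the corollary follows from Remark~\ref{remark1} together with Theorem~\ref{theorem 0.1}.

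For completeness I would also note that \eqref{e.3.18} and \eqref{0.10} can be verified directly from $\mathbb{E}[v_1^p]\sim\frac{b^p}{p+1}$: the ratio in \eqref{e.3.18} is asymptotic to $\frac{n+1}{n+2}\,b\to 0$, while for a multi-index with $p_i\ge 2$ and $\frac{p_{1:k}}{2}>k$ the left-hand side of \eqref{0.10} is of order $b^{\,p_{1:k}/2-k}$, which tends to $0$ precisely because $\frac{p_{1:k}}{2}>k$. There is essentially no obstacle here: the whole argument rests on the single fact that $g(b)$ is exponentially small, so it never competes with the polynomial term $b^n$ in any of the moment ratios; the only minor care needed is the bookkeeping of the $o(\cdot)$ terms after the substitution $a=1/b$.
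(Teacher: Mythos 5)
Your proposal is correct and follows essentially the same route as the paper: compute $\mathbb{E}[v_1^p]$ explicitly, use that $g(b)=e^{-1/b^{\epsilon}}$ is $o(b^n)$ for every $n$ to get $\mathbb{E}[v_1^p]=\frac{b^p}{p+1}+o(b^p)$, and then invoke Assumption \ref{a.5.1} (with $a=1/b$, $k_p=p$) together with Remark \ref{remark1} and Theorem \ref{theorem 0.1}. Your moment formula is in fact slightly more careful than the paper's (which drops the harmless $-g(b)b^p$ term), and your direct re-verification of \eqref{e.3.18} and \eqref{0.10} is redundant but sound.
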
 
\begin{proof} Before we proceed to the proof. Let us note the obvious fact 
that $f_b$
converges to the Dirac delta distribution $\delta(x-1)$. 

For any $ n>0$, we see $\lim_{b \rightarrow 0}\frac{g(b)}{b^n}=0$. A trivial calculation implies that for any positive integer $p$,
\begin{align}
	\nonumber
	\mathbb{E}[v_i^p]=\frac{b^p+g(b)\sum_{i=0}^pb ^i}{p+1}=\frac{b^p}{p+1}+o(b^p)\,. 
\end{align}
An application of Assumption \ref{a.5.1} with  $k_p=p$ (let $a=\frac{1}{b}$)
yields the desired statement. 
\end{proof}

When the stick-breaking  weights are iid,   Theorem \ref{theorem 0.1}
that we obtained for the stick-breaking process $P$ 
covers   very general  situation and the conditions 
\eqref{e.3.18}-\eqref{0.10} are minimal and are easy to verify. 
But 
when  the stick-breaking  weights are not iid  the situation
becomes much more sophisticated like in other statistical situations. 
We  shall consider  some well-known processes 
introduced earlier in Section \ref{s.pre}. For these processes 
the  explicit forms   of  the 
 joint finite dimensional distributions  of the stick-breaking weights,   although complicated, 
   are given.   
We can state  similar  results to that  in  
  Theorem \ref{theorem 0.1} in one theorem for all these processes. 
\begin{theorem}\label{theorem 2.17}
Let $P$ be  one of  the  Poisson-Dirichlet process $\PDP(a,b,H)$, the normalized inverse Gaussian process $\N-IGP(a,H)$, the normalized generalized gamma process $\NGGP(\sigma,a,H)$, 
   and the generalized Dirichlet process $\GDP(a,r,H)$.  
Then, we have the following results.
\begin{enumerate}
\item[(i)] 
As $a \rightarrow \infty$,
\begin{equation}
\left(D_a(A_1),D_a(A_2),\cdots,D_a(A_n)\right) \overset{d}{\rightarrow} (X_1,X_2,\cdots,X_n)\,, \label{e.2.34} 
\end{equation} 
where $(X_1,X_2,\cdots,X_n) \sim N (0, \Sigma)$ and $\Sigma
=(\si_{ij})_{1\le i,j\le n}$ is  given by 
\begin{align}
    \sigma_{ij}= \begin{cases}
    1 & \qquad \mbox{if $i=j$} \\
    -\sqrt{\frac{H(A_i)H(A_j)}{\left(1-H(A_i)\right)\left(1-H(A_j)\right)}}& \qquad \mbox{if $i \neq j$}\,.  \\
\end{cases} 
\end{align}
\item[(ii)] 
Let $(\XX, \cX)=(\RR^d, \cB(\RR^d))$ be the $d$-dimensional Euclidean space
with the Borel $\si$-algebra. Then 
\begin{equation}
Q_{H,a}  \stackrel{{\rm weakly}}{\rightarrow }  \quad {B_H^o}\qquad  {\rm in}\qquad D(\RR ^d)\ 
\end{equation}  
with respect to the Skorohod topology. 
\end{enumerate} 
\end{theorem}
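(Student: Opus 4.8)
The plan is to prove Theorem \ref{theorem 2.17} by the method of moments, using the moment asymptotics already collected in Propositions \ref{prop.3.2}--\ref{prop.3.5} as the only process-specific input, so that a single argument handles all four priors simultaneously. The key observation is that each of these priors is a stick-breaking process admitting the representation \eqref{1.1}--\eqref{1.111}, and by \eqref{1.112} the natural normalization is $\sqrt{\mathbb{E}[\sum_i w_i^2]}$, which for every one of these processes behaves like $\sqrt{1/a}$ up to a known constant (the constant being $1$ for $\PDP$, $\N$-$\mathrm{IGP}$, $\NGGP$, and $[\sum_{k=1}^r k^{-2}]/[\sum_{j=1}^r j^{-1}]^2$ for $\GDP$, per \eqref{1.2.4}, \eqref{5.02}, \eqref{6.5}, \eqref{e.2.12}). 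So the real content is a single lemma: for a stick-breaking process with $\mathbb{E}[\sum_{1\le i_1<\cdots<i_k<\infty} w_{i_1}^{p_1}\cdots w_{i_k}^{p_k}] = O(a^{-(p_{1:k}-k)})$ for all multi-indices, with the leading term $\frac{1}{k! a^k}(\text{const})^k$ when all $p_j=2$, the finite-dimensional vectors $(D_a(A_1),\dots,D_a(A_n))$ converge to the stated Gaussian, and $Q_{H,a}$ converges weakly in $D(\RR^d)$ to $B_H^o$.

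First I would establish the CLT \eqref{e.2.34}. Fix disjoint $A_1,\dots,A_n$ and set $Y_i = \delta_{\theta_i}(A_\ell) - H(A_\ell)$ for the $\ell$-th coordinate; then $P(A_\ell) - H(A_\ell) = \sum_i w_i Y_i^{(\ell)}$. I would compute, for an arbitrary monomial $\prod_{\ell} (P(A_\ell)-H(A_\ell))^{m_\ell}$, the expectation by expanding the product of sums over the $w_i$'s, conditioning on $\{w_i\}$ (which are independent of $\{\theta_i\}$), and using that the $\theta_i$ are iid $\sim H$ with the $A_\ell$ disjoint, so that a given index $i$ contributes a factor $\mathbb{E}[\prod_\ell (Y_i^{(\ell)})^{a_\ell}]$ depending only on how many coordinates ``select'' index $i$. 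The combinatorics reduce the moment to a finite sum of terms of the form $\mathbb{E}[\sum_{i_1<\cdots<i_k} w_{i_1}^{p_1}\cdots w_{i_k}^{p_k}]$ times bounded single-site moments of indicators. Using the order-of-magnitude bounds, only the terms where every $w_i$ appears to exactly the second power survive after dividing by $(\mathbb{E}[\sum w_i^2])^{(\sum m_\ell)/2} \asymp a^{-(\sum m_\ell)/2}$; all other terms are $o(1)$ because $p_{1:k}-k > (\text{number of factors})$ strictly when some $p_j\ge 3$. One then checks the surviving limit matches the moments of the centered Gaussian with covariance $H(A_i\cap A_j) - H(A_i)H(A_j)$ (which is $-H(A_i)H(A_j)$ on disjoint sets), and after rescaling by $\sqrt{H(A_\ell)(1-H(A_\ell))}$ one gets exactly the correlation matrix $\Sigma$ in \eqref{e.2.24a}. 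Convergence of all moments to those of a (multivariate) Gaussian, which is determined by its moments, gives convergence in distribution. This is essentially the computation already packaged for $\DPG$ in Proposition \ref{prop.3.1} and Theorem \ref{theorem 0.1}; here the role of the hypotheses \eqref{e.3.18}--\eqref{0.10} is played directly by the conclusions of Propositions \ref{prop.3.2}--\ref{prop.3.5}.

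For the functional CLT \eqref{e.2.24}, I would follow the standard two-part scheme on $D(\RR^d)$ with the Skorohod topology as set up in the preliminaries: (a) finite-dimensional convergence, and (b) tightness. Part (a) is the CLT of part (i) applied to the rectangles generating $\cB(\RR^d)$ (using that increments of $Q_{H,a}$ over disjoint boxes are handled by the same moment computation, and the limit has the covariance of the $H$-Brownian bridge $B_H^o$). Part (b), tightness, is where I expect the main obstacle: one needs a moment estimate of the form $\mathbb{E}[|Q_{H,a}(B)|^{2\gamma}] \le C\, (\mu(B))^{1+\beta}$ for blocks $B$, uniformly in $a$, with $\gamma,\beta>0$ chosen to invoke the $D(\RR^d)$-tightness criterion of Bickel--Wichura (as in \citet{bickel1971}). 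The fourth-moment version ($\gamma=2$) should suffice in most cases and reduces, via the same conditioning argument, to controlling $\mathbb{E}[\sum_{i<j} w_i^2 w_j^2]$ and $\mathbb{E}[\sum_i w_i^4]$ against $(\mathbb{E}[\sum w_i^2])^2$ — precisely the estimates \eqref{1.2.4}, \eqref{5.10}, \eqref{6.6}, \eqref{e.2.12} and the single-sum bounds \eqref{5.01}, \eqref{6.3}, \eqref{e.2.11} guarantee that the $w_i^4$ term is lower order, leaving a clean $(\mu(B_1)\mu(B_2))$-type bound for disjoint neighboring blocks. The delicate point is making the block-moment bound genuinely uniform in $a$ and in the location/size of the blocks, and handling the degenerate boundary blocks; I would treat the four processes uniformly by only invoking the $O(\cdot)$ statements, which are all of the same shape, so that one tightness proof covers all four cases. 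Once (a) and (b) hold, Prokhorov's theorem plus identification of the limit via finite-dimensional distributions yields $Q_{H,a} \stackrel{\mathrm{weakly}}{\to} B_H^o$ in $D(\RR^d)$.
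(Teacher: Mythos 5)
For the Poisson--Dirichlet, normalized inverse Gaussian and normalized generalized gamma processes, and for the tightness part (ii), your plan coincides with the paper's: the authors reduce part (i) to the moment computation already carried out for Theorem \ref{theorem 0.1}, with Propositions \ref{prop.3.2}--\ref{prop.3.4} supplying the role of hypotheses \eqref{e.3.18}--\eqref{0.10}, and they verify tightness exactly as you describe, via the Bickel--Wichura criterion with $\gamma_1=\gamma_2=2$, $\beta_1=\beta_2=1$ and $\mu=2H$, using the bound $\mathbb{E}[|Q_{H,a}(A)|^2|Q_{H,a}(B)|^2]\le \mu(A)\mu(B)$ obtained from the fourth joint moments. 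Your worry about uniformity of the block estimate is resolved in the paper by this single clean inequality, which follows from $H(\cdot)^2\le H(\cdot)$.

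The genuine gap is in your treatment of the generalized Dirichlet process. You propose to feed Proposition \ref{prop.3.5} into the method of moments, but in the paper that proposition is not an independent input: its proof explicitly says that the weak convergence of $D_a$ for $\GDP(a,r,H)$ is first established in part (i) of Theorem \ref{theorem 2.17}, and the moment asymptotics \eqref{e.2.11}--\eqref{e.2.12} are then read off from that convergence together with the stick-breaking representation. Using Proposition \ref{prop.3.5} to prove the $\GDP$ case of the theorem is therefore circular. To close the gap you would need an independent derivation of the joint moment asymptotics of the $\GDP$ stick-breaking weights, which the paper deliberately avoids on the grounds that the stick-breaking representation of this process is too complicated to use; instead the authors prove the $\GDP$ case of part (i) directly from the explicit finite-dimensional density \eqref{7.1}, by a lengthy Laplace-type asymptotic analysis (Stirling's formula for the normalizing constants, expansion of the integrand around its critical point, computation of the Hessian and its inverse via the Sherman--Morrison formula) showing that the density of $(D_a(A_1),\dots,D_a(A_n))$ converges pointwise to the claimed Gaussian density. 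Either you supply the missing moment estimates for the $\GDP$ weights from scratch, or you must switch to the density method for this one process.
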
 


As long as the central limit theorem of $P$ is obtained, it is trivial to use  the delta-method 
to show similar theorem for the nonlinear functional 
 of this process. Using   Theorem 3.9.4 in \citet{van1996}, we  can state the following theorem. 
\begin{theorem}\label{delta method}
Let $P$ be  one of $\DP(a,H)$, $\PDP(a,b,H)$, $\N-IGP(a,H)$,
$\NGGP(\\
\sigma,a,H)$, $\GDP(a,r,H)$ or the Dirichlet process with general stick-breaking weights satisfying \eqref{e.3.18}-\eqref{0.10} of  
Theorem \ref{theorem 0.1}. Let $\DD$ be the metric space of all probability measures
on $(\XX,  \cX)$  with the total variation  distance.   
Let 
 $\phi :  \DD  \rightarrow \mathbb{R}^d$ be a continuous functional which is   Hadamard differentiable on $\DD$.
 Then, as $a \rightarrow \infty$, we have 
 \[
\frac{1}{\sqrt{\mathbb{E}[\sum_{i=1}^{\infty}w_i^2]}}\left(\phi\left(P(\cdot)\right)-\phi\left(H(\cdot)\right)\right)\overset{weakly}{\rightarrow}\phi'_{H(\cdot)}\left(B_H^o\right)\,. 
\]
 
\end{theorem}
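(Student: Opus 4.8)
The plan is to obtain this as a direct consequence of the functional central limit theorems already proved and the functional delta method, i.e.\ Theorem 3.9.4 of \citet{van1996}. The key bookkeeping observation is that, by \eqref{e.2.9a} and \eqref{1.16}, the normalized quantity in the statement is exactly the input of the delta method: writing $r_a:=\bigl(\mathbb{E}[\sum_{i=1}^{\infty}w_i^2]\bigr)^{-1/2}$, we have $Q_{H,a}(\cdot)=r_a\bigl(P(\cdot)-H(\cdot)\bigr)$, so the object whose weak limit we seek is $r_a\bigl(\phi(P)-\phi(H)\bigr)$, the image of $r_a(P-H)$ under $\phi$. For orientation we note that $\mathbb{E}[\sum_i w_i^2]\to 0$, of order $1/a$ in the named cases, by Propositions \ref{prop.3.1}--\ref{prop.3.5}, so $r_a\to\infty$; but the functional central limit theorem already supplies the convergence of $r_a(P-H)$ directly.

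I would then proceed in four steps. First, fix the ambient space: via the embedding of a probability measure with its multivariate distribution function $t\mapsto\mu((-\infty,t_1]\times\cdots\times(-\infty,t_d])$, regard $\DD$ as a subset of $D(\RR^d)$ and $\phi$ as a map defined on a neighbourhood of $H$ therein, Hadamard differentiable at $H$. Second, invoke Theorem \ref{theorem 0.1}(ii) (for $\DP(a,H)$, and for $\DPG(g_a,H)$ under \eqref{e.3.18}--\eqref{0.10}) or Theorem \ref{theorem 2.17}(ii) (for $\PDP(a,b,H)$, $\N-IGP(a,H)$, $\NGGP(\sigma,a,H)$, $\GDP(a,r,H)$) to get $r_a(P-H)=Q_{H,a}\overset{weakly}{\rightarrow}B_H^o$ in $D(\RR^d)$. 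Third, verify the tangency requirement of the delta method: since $H$ is nonatomic, $B_H^o$ has almost surely continuous sample paths --- for $d=1$, $t\mapsto B_H^o((-\infty,t])$ is a standard Brownian bridge composed with the continuous map $t\mapsto H((-\infty,t])$, and the $d$-dimensional case is analogous with a tied-down Brownian sheet --- so $B_H^o$ is a tight, separable, Borel-measurable random element of $D(\RR^d)$ concentrated on the subspace $C\subset D(\RR^d)$ of continuous functions, and $\phi$, being Hadamard differentiable on $\DD$, is in particular Hadamard differentiable at $H$ tangentially to $C$. Fourth, apply the functional delta method to conclude $r_a\bigl(\phi(P)-\phi(H)\bigr)\overset{weakly}{\rightarrow}\phi'_{H(\cdot)}(B_H^o)$, which is the assertion.

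The only genuinely delicate point, and hence the step I expect to require the most care, is the topology. The functional central limit theorems hold with respect to the Skorohod topology on $D(\RR^d)$, which is not a linear topology (addition is not jointly continuous there), whereas the delta method is most naturally phrased on a normed linear space. The resolution is exactly the tangency observation above: the limit $B_H^o$ lives, almost surely, in the subspace $C$ of continuous functions --- this is precisely where nonatomicity of $H$ is used --- and on $C$ the Skorohod topology coincides with the topology of uniform convergence on compacts, on which the linear operations are continuous; one therefore applies the form of the functional delta method in which $r_a(P-H)$ converges to a limit supported on such a well-behaved subspace, as discussed in Section 3.9 of \citet{van1996}. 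A secondary, routine point is that $\phi$ is $\RR^d$-valued: one applies the scalar delta method to each coordinate $\pi_j\circ\phi$, using that Hadamard differentiability is stable under composition with the continuous linear projections $\pi_j:\RR^d\to\RR$, and reassembles the joint weak limit, which is the $\RR^d$-valued Gaussian element $\phi'_{H(\cdot)}(B_H^o)$ (Gaussian because it is the image of the Gaussian $B_H^o$ under a continuous linear map).
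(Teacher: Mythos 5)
Your proposal is correct and follows essentially the same route as the paper, which simply derives the result from the functional central limit theorems of Theorems \ref{theorem 0.1}(ii) and \ref{theorem 2.17}(ii) via the functional delta method (Theorem 3.9.4 of \citet{van1996}) without further elaboration. Your additional care about the Skorohod-versus-uniform topology issue and the tangency of $B_H^o$ to the continuous functions is a genuine refinement of detail rather than a different method, and it correctly identifies the point the paper leaves implicit.
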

One application of the above theorem  is the limiting distribution of the empirical quantile process of $P$.  
\begin{example}
Suppose $(\XX, \cX)= (\mathbb{R},\mathcal{B}(\mathbb{R}))  $ and 
suppose that   $P$ is   one of $\DP(a,H)$, $\PDP(a,b,H)$, $\N-IGP(a,H)$, $\NGGP(\sigma,a,H)$, $\GDP(a,r,H)$ or $P$ is the Dirichlet process with general stick-breaking weights satisfying \eqref{e.3.18}-\eqref{0.10} of  Theorem \ref{theorem 0.1}.
Let   $H$  be absolutely  continuous  
 with positive derivative $h$. 
By   Lemma 3.9.23 of \citet{van1996}, we have  
\begin{align}
\frac{1}{\sqrt{\mathbb{E}[\sum_{i=1}^{\infty}w_i^2]}}\left(P^{-1}(\cdot)-H^{-1}(\cdot)\right)\overset{weakly}{\rightarrow}-\frac{B^o(\cdot)}{h\left(H^{-1}(\cdot)\right)}=G(\cdot)\,,  
\end{align} 
where $H^{-1}(s)=\inf \{t: H(t)\geq s\}$. 
  The limiting process $G$ is a Gaussian process with zero-mean and 
with covariance function 
\begin{align*}
\Cov\left(G\left((0,s]\right),G\left((0,t]\right)\right)=\frac{s \wedge t-st}{h\left(H^{-1}\left((0,s]\right)\right)h\left(H^{-1}\left((0,t]\right)\right)} 
\end{align*}
for $s,t \in \mathbb{R}$,
\end{example}

\section{Concluding remarks} 
The  method of moments  used in this paper could be applied to the study of asymptotics for some Bayesian nonparametric posterior processes in the following situations:
(i) when  the  parameter $a$  is finite and  the   sample size is large;  (ii)  when  the  
parameter  $a$ is large  and  the  sample size is finite;  (iii)  
when  the  parameter  $a$ and the sample size  are both large.  Interesting examples are  the posterior processes of the normalize random measures with independent increments,  which include the normalized generalized inverse Gaussian process  studied in    \citet{james2009}.  We may also apply our method of moments to  study  the posterior distributions of the hNRMI  studied  by \citet{favaro2016},  which include  the generalized Dirichlet process and the normalized generalized gamma process.
\bibliographystyle{ba}
\bibliography{huzhang}

 \newpage

\section{Supplemental Appendix}
In this section, we present   the proofs for the propositions and theorems appeared  in this  paper.
\subsection{Proof of Proposition \ref{prop.3.1}}
\begin{proof} Using the binomial expansion and using the fact that $v_i\in [0, 1]$  we have 
\begin{align}
     \mathbb{E}\left[v_i^n(1-v_i)^m\right]&=\sum_{k=0}^{m}\binom{m}{k}(-1)^k\mathbb{E}[v_i^{n+k}]\nonumber\\
    &=\mathbb{E}[v_i^n]-m\mathbb{E}[v_i^{n+1}]+\cdots+(-1)^m\mathbb{E}[v_i^{m+n}]
    \nonumber\\
    &=\mathbb{E}[v_1^n]+{O}\left(\mathbb{E}[v_1^{n+1}]\right) =\mathbb{E}[v_1^n]+o\left(\mathbb{E}[v_1^{n }]\right)\,, \label{e.3.5} 
\end{align}
where the last  equality follows from the assumption $\displaystyle \lim_{a \rightarrow \infty} \frac{\mathbb{E}[v_1^{n+1}]}{\mathbb{E}[v_1^n]}=0$ for all $n \in \ZZ_+$. 
Since $v_i \in [0,1]$ and since we   assume that $v_0$ is not  identically 
zero,  we have 
$\mathbb{E}[(1-v_i)^m] \in [0,1)$ and
\begin{align}
     \sum_{j=0}^{\infty}\left(\mathbb{E}\left[(1-v_i)^m\right]\right)^j&=\frac{1}{1-\mathbb{E}\left[(1-v_i)^m\right]}\nonumber\\ 
    &=\frac{1}{1-\sum_{j=0}^{m}\binom{m}{j}(-1)^j\mathbb{E}[v_i^j]}\nonumber\\
    &=\frac{1}{m\mathbb{E}[v_1]+\sum_{j=2}^{m}\binom{m}{j}(-1)^j\mathbb{E}[v_i^j]}
    \nonumber\\
    &=\frac{1}{m\mathbb{E}[v_1]}+o\left(\frac{1}{m\mathbb{E}[v_1]}\right)\,, 
    \label{e.3.6} 
\end{align}   
where   the last  equality also follows from the assumption $\displaystyle \lim_{a \rightarrow \infty} \frac{\mathbb{E}[v_1^{n+1}]}{\mathbb{E}[v_1^n]}=0$ for all $n \in \ZZ_+$.  This proves \eqref{0.1}-\eqref{0.2}.

Now we use \eqref{0.1}-\eqref{0.2} to show \eqref{0.3}. 
Denote  $$\mathcal{I}=\mathbb{E}\left[\sum_{1\leq i_1<i_2<\cdots<i_k<\infty}w_{i_1}^{p_1}w_{i_2}^{p_2}\cdots w_{i_k}^{p_k}\right]\,.$$ By the construction of the stick-breaking sequence  $\{w_i\}_{i=1}^{\infty}$, we may rewrite $\mathcal{I}$ as
\begin{equation}
    \begin{aligned}
    \nonumber
    &\mathcal{I}=\\
    &\sum_{1\leq i_1<i_2<\cdots<i_k<\infty}\mathbb{E}\left[v_{i_1}^{p_1}\prod_{\ell_1=1}^{i_1-1}\left(1-v_{\ell_1}\right)^{p_1}\cdots v_{i_m}^{p_m}\prod_{\ell_m=1}^{i_m-1}(1-v_{\ell_m})^{p_m}\cdots v_{i_k}^{p_k}\prod_{\ell_k=1}^{i_k-1}(1-v_{\ell_k})^{p_k}\right].\\
    \end{aligned}
\end{equation}
Since $1\leq i_1<i_2<\cdots<i_k<\infty$, we can rearrange $\mathcal{I}$ by putting $v$'s with the same index together to obtain
\begin{equation}
	\begin{aligned}
	&\mathcal{I}=\sum_{1\leq i_1<i_2<\cdots<i_k<\infty}\mathbb{E}\Bigg[v_{i_1}^{p_1}(1-v_{i_1})^{p_{2:k}}\prod_{\ell_1=1}^{i_1-1}(1-v_{\ell_1})^{p_{1:k}}v_{i_2}^{p_2}(1-v_{i_2})^{p_{3:k}}\\
    &\prod_{\ell_2=i_1+1}^{i_2-1}(1-v_{\ell_2})^{p_{2:k}}\cdots v_{i_m}^{p_m}(1-v_{i_m})^{p_{m+1:k}}\prod_{\ell_m=i_{m-1}+1}^{i_m-1}(1-v_{\ell_m})^{p_{m:k}}\\
    &\cdots v_{i_{k-1}}^{p_{k-1}}(1-v_{i_{k-1}})^{p_k}\prod_{\ell_{k-1}=i_{k-2}+1}^{i_{k-1}-1}(1-v_{\ell_{k-1}})^{p_k-1:k}v_{i_k}^{p_k}\prod_{\ell_k=i_{k-1}+1}^{i_k-1}(1-v_{\ell_k})^{p_k}\Bigg].\label{0.5}
	\end{aligned}
\end{equation}
From the independence of  $\{v_1, v_2, \cdots\}$   it follows 
\begin{equation}
	\begin{aligned}
	\nonumber
	 \mathcal{I} = \sum_{i_1=1}^{\infty}&\mathbb{E}\left[v_{i_1}^{p_1}\left(1-v_{i_1}\right)^{p_{2:k}}\right]\prod_{\ell_1=1}^{i_1-1}\mathbb{E}\left[\left(1-v_{\ell_1}\right)^{p_{1:k}}\right]\sum_{i_2=i_1+1}^{\infty}\mathbb{E}\left[v_{i_2}^{p_2}\left(1-v_{i_2}\right)^{p_{3:k}}\right]\\
    &\prod_{\ell_2=i_1+1}^{i_2-1}\mathbb{E}\left[\left(1-v_{\ell_2}\right)^{p_{2:k}}\right]\cdots \sum_{i_m=i_{m-1}+1}^{\infty}\mathbb{E}\left[v_{i_m}^{p_m}\left(1-v_{i_m}\right)^{p_{m+1:k}}\right]\\
    &\prod_{\ell_m=i_{m-1}+1}^{i_m-1}\mathbb{E}\left[\left(1-v_{\ell_m}\right)^{p_{m:k}}\right]\cdots \sum_{i_{k-1}=i_{k-2}+1}^{\infty}\mathbb{E}\left[v_{i_{k-1}}^{p_{k-1}}\left(1-v_{i_{k-1}}\right)^{p_k}\right]\\
    &\prod_{\ell_{k-1}=i_{k-2}+1}^{i_{k-1}-1}\mathbb{E}\left[\left(1-v_{\ell_{k-1}}\right)^{p_{k-1:k}}\right]\sum_{i_k=i_{k-1}+1}^{\infty}\mathbb{E}\left[v_{i_k}^{p_k}\right]\prod_{\ell_k=i_{k-1}+1}^{i_k-1}\mathbb{E}\left[\left(1-v_{\ell_k}\right)^{p_k}\right].
	\end{aligned}
\end{equation}
Denoting  the general factor in the above expression by
\[
\mathcal{S}_m=\sum_{i_m=i_{m-1}+1}^{\infty}\mathbb{E}\left[v_{i_m}^{p_m}\left(1-v_{i_m}\right)^{p_{m+1:k}}\right]\prod_{\ell_m=i_{m-1}+1}^{i_m-1}\mathbb{E}\left[\left(1-v_{\ell_m}\right)^{p_{m:k}}\right],
\]
for $m\in \{1, 2, \cdots, k\}$,  we can write 
\begin{equation}
\mathcal{I}=\mathcal{S}_1\mathcal{S}_2\cdots\mathcal{S}_k\,.
\label{e.3.8} 
\end{equation} 
From  the fact that  $\{v_1, v_2, \cdots\}$ are identical and  by \eqref{0.1}-\eqref{0.2} we have  for $m=1, \cdots, k$, 
\begin{equation}
	\begin{aligned}
	\nonumber
	&\mathcal{S}_m=\mathbb{E}\left[v_1^{p_m}\left(1-v_1\right)^{p_{m+1:k}}\right]\sum_{i_m=i_{m-1}+1}^{\infty} \Big(\mathbb{E}\left[(1-v_1)^{p_{m:k}}\right]\Big)^{i_m-i_{m-1}-1}\\
	&=\bigg(\mathbb{E}\left[v_1^{p_m}\right]+o\left(\mathbb{E}\left[v_1^{p_m}\right]\right)\bigg)\left(\frac{1}{p_{m:k}\mathbb{E}[v_1]}+o\left(\frac{1}{\mathbb{E}[v_1]}\right)\right)\\
	&=\frac{\mathbb{E}[v_1^{p_m}]}{p_{m:k}\mathbb{E}[v_1]}+o\left(\frac{\mathbb{E}[v_1^{p_m}]}{p_{m:k}\mathbb{E}[v_1]}\right).
	\end{aligned}
\end{equation}
Substituting this estimate into \eqref{e.3.8}, we see 
\begin{equation}
	\begin{aligned}
	\nonumber
	&\mathcal{I}=\prod_{m=1}^k\left(\frac{\mathbb{E}[v_1^{p_m}]}{p_{m:k}\mathbb{E}[v_1]}+o\left(\frac{\mathbb{E}[v_1^{p_m}]}{p_{m:k}\mathbb{E}[v_1]}\right)\right)\\
	&=\frac{\mathbb{E}[v_1^{p_1}]\cdots\mathbb{E}[v_1^{p_m}]}{p_{1:k}p_{2:k} \cdots p_{k:k}(\mathbb{E}[v_1])^k}+o\left(\frac{\mathbb{E}[v_1^{p_1}]\cdots\mathbb{E}[v_1^{p_m}]}{(\mathbb{E}[v_1])^k}\right).
	\end{aligned}
\end{equation}
This prove \eqref{0.3}. 
If we take  $p_j=2$ for all $j\in\{ 1, \cdots, k\}$,   then 
  $p_{1:k}=2k$.  The identity \eqref{0.4} is hence  a straightforward consequence of \eqref{0.3}.
\end{proof}
\subsection{Proof of Proposition \ref{prop.3.2}}
Since  $v_1, v_2, \cdots$  are no longer  identically distributed, the results established in   the 
 previous proof  cannot be applied and  we  need   
 some new computations. 
We shall still use the general method of moments. To this end,
  we need   first to recall   
some results about the  hypergeometric functions   
  and we refer to  \citet{aomoto2011} for further reading.

\begin{definition}
The hypergeometric function $\ {}_2F_1(a,b, c;x)$ (of parameters $a, b, c\in \mathbb{C}$, 
 the complex plane)
is defined by the series
\[
\ {}_2F_1(a,b, c;x)=\sum_{n=0}^{\infty}\frac{(a)_n(b)_n}{(c)_nn!}x^n
\]
for $|x| < 1$, where $(q)_n$ is the Pochhammer symbol  defined by
\begin{equation}
\nonumber
    (q)_n = \left\{ \begin{array}{lcl}
    0 & \quad \mbox{for} & n=0 \\
    q(q+1)...(q+n-1) & \quad \mbox{for} & n>0 \,. \\
\end{array}\right.
\end{equation} 
This function is defined for $|x|<1$ and may  be extended to
$x=1$ and/or $x=-1$  by continuation.  
\end{definition}
We need the following result obtained by Gauss:  when   ${\rm Re}(c-a-b)>0$ (real part of 
$c-a-b$), the hypergeometric function can be extended to 
 $x=1$ and its value at this point is given by 
\begin{align} 
\ {}_2F_1(a,b, c;1)=\frac{\Gamma(c)\Gamma(c-a-b)}{\Gamma(c-a)\Gamma(c-b)}.\label{1.2.1}
\end{align}
We introduce a variant of   the hypergeometric function that will be needed  in the following calculations.
\begin{definition}
For any $b<2$, $n\in \mathbb{N}^+$, $a>0$, $m>0$, $c>0$, define the \textit{increasing coefficient hypergeometric function} ${}_2Q_1\left((a,b),c,m,n;x\right)$ by the series
$${}_2Q_1\left((a,b),c,m,n;x\right)=\sum_{k=0}^{\infty}\prod_{\ell=1}^{n-1}\left(a+b(k+\ell)\right)\frac{\left(\frac{a}{b}+1\right)_k(c)_k}{\left(\frac{a+m}{b}+1\right)_kk!}x^k$$ for $|x|<1$ and we may extend the definition to   $x=1$ and/or $x=-1$ by continuation.  
In the above product we use the convention 
that  $\prod_{\ell =1}^{0}c_\ell =1$.
\end{definition}
The next proposition describes  a Gauss type    result for the increasing coefficient hypergeometric function.
\begin{prop}\label{lemma1.2.1}
Let $b<2$, $n\in \mathbb{N}^+$, $a>0$, $m>0$, $c>0$. Then,  the increasing coefficient hypergeometric function can be extended to
 $x=1$ and its value at this point is given by 
\[
{}_2Q_1\left((a,b),c,m,n;1\right)=\prod_{\ell=1}^{n-1}(a+b\ell)\frac{a+m}{m-nb}\,.
\]
\end{prop}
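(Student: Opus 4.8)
The plan is to establish a closed form for the extended value ${}_2Q_1\big((a,b),c,m,n;1\big)$ by reducing it to the classical Gauss identity \eqref{1.2.1} after peeling off the $n$-dependent prefactor. First I would observe that the product $\prod_{\ell=1}^{n-1}\big(a+b(k+\ell)\big)$, viewed as a function of $k$, is not constant, so one cannot simply factor it out of the series. Instead I would note that it is a polynomial in $k$ of degree $n-1$, and the key structural fact is that it can be rewritten in terms of shifted Pochhammer symbols: since $a+b(k+\ell)=b\big(\tfrac{a}{b}+k+\ell\big)$, we have $\prod_{\ell=1}^{n-1}\big(a+b(k+\ell)\big)=b^{n-1}\prod_{\ell=1}^{n-1}\big(\tfrac{a}{b}+\ell+k\big)=b^{n-1}\,\dfrac{\big(\tfrac{a}{b}+1\big)_{n-1+k}}{\big(\tfrac{a}{b}+1\big)_{k}}$, using the telescoping identity $(q)_{k}(q+k)_{j}=(q)_{k+j}$. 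This is the crucial manipulation: it converts the $k$-dependent product into a ratio of Pochhammer symbols that recombines with the $\big(\tfrac{a}{b}+1\big)_k$ already present in the series.

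Next I would substitute this into the definition of ${}_2Q_1$. The factor $\big(\tfrac{a}{b}+1\big)_k$ in the numerator cancels against the denominator of the rewritten product, leaving
\[
{}_2Q_1\big((a,b),c,m,n;x\big)=b^{n-1}\big(\tfrac{a}{b}+1\big)_{n-1}\sum_{k=0}^{\infty}\frac{\big(\tfrac{a}{b}+n\big)_k(c)_k}{\big(\tfrac{a+m}{b}+1\big)_k\,k!}x^k,
\]
where I have used $\big(\tfrac{a}{b}+1\big)_{n-1+k}=\big(\tfrac{a}{b}+1\big)_{n-1}\big(\tfrac{a}{b}+n\big)_k$. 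The remaining sum is exactly ${}_2F_1\big(\tfrac{a}{b}+n,\,c;\,\tfrac{a+m}{b}+1;\,x\big)$, a genuine Gauss hypergeometric series, so the analytic continuation to $x=1$ is governed by \eqref{1.2.1} provided the parameter condition ${\rm Re}\big(\tfrac{a+m}{b}+1-(\tfrac{a}{b}+n)-c\big)>0$ holds; with $c>0$ one should check this reduces to the stated hypotheses (in particular the role of $m-nb>0$, which must be positive for the final formula to make sense, enters here).

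Then I would apply \eqref{1.2.1} to obtain
\[
{}_2F_1\Big(\tfrac{a}{b}+n,\,c;\,\tfrac{a+m}{b}+1;\,1\Big)=\frac{\Gamma\big(\tfrac{a+m}{b}+1\big)\,\Gamma\big(\tfrac{m}{b}+1-n-c+? \big)}{\Gamma\big(\tfrac{m}{b}+1-n\big)\,\Gamma\big(\tfrac{a+m}{b}+1-c\big)},
\]
and then combine with the prefactor $b^{n-1}\big(\tfrac{a}{b}+1\big)_{n-1}$. Writing $\big(\tfrac{a}{b}+1\big)_{n-1}=\prod_{\ell=1}^{n-1}\big(\tfrac{a}{b}+\ell\big)$ and absorbing the $b^{n-1}$ gives $\prod_{\ell=1}^{n-1}(a+b\ell)$, which matches the first factor in the claimed answer. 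The remaining gamma-function ratio must collapse to $\dfrac{a+m}{m-nb}$; verifying this is a matter of using $\Gamma(z+1)=z\Gamma(z)$ to telescope, and I expect the $c$-dependence to cancel entirely (which it must, since the claimed value has no $c$) — this cancellation is a useful internal consistency check.

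The main obstacle I anticipate is bookkeeping: getting the Pochhammer shifts exactly right in the telescoping step $\prod_{\ell=1}^{n-1}\big(\tfrac{a}{b}+\ell+k\big)=\big(\tfrac{a}{b}+1\big)_{n-1+k}/\big(\tfrac{a}{b}+1\big)_k$, and then correctly tracking the parameters through Gauss's formula so that the final gamma ratio genuinely simplifies to $(a+m)/(m-nb)$ independent of $c$. A secondary subtlety is justifying the extension to $x=1$: one needs ${\rm Re}(c-a-b)>0$ in the notation of \eqref{1.2.1}, i.e. the convergence of the ${}_2F_1$ series at $1$, and one should note where the hypotheses $b<2$, $m>0$, $c>0$ are actually used to guarantee this (the condition $m-nb>0$ appearing in the answer is presumably the effective constraint and should be flagged, even if not every hypothesis is strictly needed for the continuation argument). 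Once the algebra is organized via the Pochhammer-ratio rewriting, the proof is essentially a one-line reduction to \eqref{1.2.1}.
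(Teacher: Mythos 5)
Your reduction is exactly the paper's: the product $\prod_{\ell=1}^{n-1}\bigl(a+b(k+\ell)\bigr)$ is absorbed into the Pochhammer symbol $\bigl(\tfrac{a}{b}+1\bigr)_k$ to leave the prefactor $\prod_{\ell=1}^{n-1}(a+b\ell)$ times a genuine $\ {}_2F_1$ with first parameter $\tfrac{a}{b}+n$, which is then summed by Gauss's formula \eqref{1.2.1}; your telescoping bookkeeping is correct. The genuine gap is the step you deferred. Filling in your ``$?$'', Gauss gives
\[
\ {}_2F_1\Bigl(\tfrac{a}{b}+n,\,c;\,\tfrac{a+m}{b}+1;\,1\Bigr)
=\frac{\Gamma\bigl(\tfrac{a+m}{b}+1\bigr)\,\Gamma\bigl(\tfrac{m}{b}+1-n-c\bigr)}{\Gamma\bigl(\tfrac{m}{b}+1-n\bigr)\,\Gamma\bigl(\tfrac{a+m}{b}+1-c\bigr)},
\]
and the $c$-dependence does \emph{not} cancel: two applications of $\Gamma(z+1)=z\Gamma(z)$ show this equals $\tfrac{a+m}{m-nb}$ precisely when $c=1$, whereas for instance $c=2$ yields $\tfrac{(a+m)(a+m-b)}{(m-nb)(m-nb-b)}$. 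Likewise the convergence condition for the continuation to $x=1$ is $m>b(n+c-1)$, not $m>nb$, so your ``internal consistency check'' fails for general $c>0$.

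This is a defect of the statement rather than of your method: the displayed value is only correct for $c=1$. The paper's own proof makes this restriction silently --- in \eqref{1.2.2} the factor $(c)_k/k!$ is dropped, turning the series into $\ {}_2F_1\bigl(\tfrac{a}{b}+n,1;\tfrac{a+m}{b}+1;1\bigr)$ --- and the only downstream use, in the proof of Proposition \ref{prop.3.2}, is ${}_2Q_1\bigl((a,b),1,p_{1:k},k;1\bigr)$ with $c=1$. To complete your argument you must actually carry out the gamma-function computation and either set $c=1$ or record the $c$-dependent value; as written, the final collapse you assert does not occur.
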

\begin{proof} By  \eqref{1.2.1}  we have 
\begin{align}
\nonumber
    &{}_2Q_1\left((a,b),c,m,n;1\right)=\prod_{\ell=1}^{n-1}(a+b\ell)\sum_{k=0}^{\infty}\prod_{i=1}^{k}\frac{a+(n+i-1)b}{a+ib+m}\nonumber\\
    &=\prod_{\ell=1}^{n-1}(a+b\ell)\ \ {}_2F_1\left(\frac{a}{b}+n,1, \frac{a+m}{b}+1;1\right)=\prod_{\ell=1}^{n-1}(a+b\ell)\frac{a+m}{m-nb},\label{1.2.2}
    \end{align} 
    proving the proposition.
\end{proof}
Now we are in the position of proving    Proposition 3.2.

\noindent {\it Proof of Proposition 3.2}\quad 
Denote $$\mathcal{S}_m=\sum_{i_m=i_{m-1}+1}^{\infty}\mathbb{E}\left[v_{i_m}^{p_m}(1-v_{i_m})^{p_{m+1:k}}\right]\prod_{\ell_m=i_{m-1}+1}^{i_m-1}\mathbb{E}\left[(1-v_{\ell_m})^{p_{m:k}}\right],$$
for $m\in \{1, 2, \cdots, k\}$.
Then we can write 
\[
\mathcal{I}:= \mathbb{E}\left[\sum_{1\leq i_1<i_2<\cdots<i_k<\infty}w_{i_1}^{p_1}w_{i_2}^{p_2}\cdots w_{i_k}^{p_k}\right]=
\mathcal{S}_1\mathcal{S}_2\cdots\mathcal{S}_k\,.
\]
We shall compute  $\mathcal{I}$ by  computing 
 $\mathcal{S}_m\mathcal{S}_{m+1}\cdots\mathcal{S}_k $  recursively on $m=k, k-1, \cdots,  2, 1$. 
First, by using \eqref{1.2.1} we have 
    \begin{align}
     \mathcal{S}_k&=\sum_{i_k=i_{k-1}+1}^{\infty}\frac{(1-b)_{p_k}}{(1+a+b(i_k-1))_{p_k}}\prod_{\ell_k=i_{k-1}+1}^{i_k-1}\frac{(a+b\ell_k)_{p_k}}{(1+a+b(\ell_k-1))_{p_k}}\nonumber\\
    &=\frac{(1-b)_{p_k}}{(1+a+bi_{k-1})_{p_k}}\ {}_2F_1\left(\frac{a}{b}+i_{k-1}+1,1, \frac{a+p_k}{b}+i_{k-1}+1;1\right)\nonumber\\
    &=\frac{(1-b)_{p_k}}{(p_k-b)(1+a+bi_{k-1})_{p_k-1}}.\label{1.2.5}
    \end{align}
Now we want to compute $\mathcal{S}_m\mathcal{S}_{m+1}\cdots\mathcal{S}_k $
assuming that  we have already computed 
$\mathcal{S}_{m+1}\cdots\mathcal{S}_k $.  To make thing clear we will explain 
how to compute $\mathcal{S}_1 \cdots\mathcal{S}_k $
from the expression of 
$\mathcal{S}_{2}\cdots\mathcal{S}_k $.  General case is similar. 
We assume 
\begin{equation}
    \begin{aligned}
    \nonumber
    &\mathcal{S}_2\cdots\mathcal{S}_k
    =\frac{(a+b(i_1+1))\cdots\left(a+b(i_1+k-2)\right)}{(1+a+bi_1)_{p_{2:k}-1}}\prod_{i=2}^{k}\frac{(1-b)_{p_i}}{{p_{i:k}-(k-i+1)b}}.
    \end{aligned}
\end{equation}
Then, by Proposition \ref{lemma1.2.1}
\begin{equation}
    \begin{aligned}
    \nonumber
    \mathcal{I}&=\mathcal{S}_1\mathcal{S}_2\cdots\mathcal{S}_k=(1-b)_{p_1}\prod_{i=2}^{k}\frac{(1-b)_{p_i}}{p_{i:k}-(k-i+1)b}\\
   &\qquad\qquad \sum_{i=1}^{\infty}\frac{(a+bi_1)\cdots(a+b(i_1+k-2))}{(1+a+b(i_1-1))_{p_{1:k}}}\prod_{\ell_1=1}^{i_1-1}\frac{(a+b\ell_1)_{p_{1:k}}}{(1+a+b(\ell_1-1))_{p_{1:k}}}\\
   &=\frac{(1-b)_{p_1}}{(a+1)_{p_{1:k}}}\prod_{i=2}^{k}\frac{(1-b)_{p_i}}{p_{i:k}-(k-i+1)b}\ {}_2Q_1\left((a,b),1,p_{1:k},k;1\right)\\
    &=\frac{(1-b)_{p_1}}{(a+1)_{p_{1:k}}}\prod_{i=2}^{k}\frac{(1-b)_{p_i}}{p_{i:k}-(k-i+1)b}\prod_{\ell=1}^{k-1}(a+b\ell)\frac{a+p_{1:k}}{p_{1:k}-kb}\\
   &=\frac{(a+b)\cdots(a+b(k-1))}{(1+a)_{p_{1:k}-1}}\prod_{i=1}^{k}\frac{(1-b)_{p_i}}{p_{i:k}-(k-i+1)b}\\
   &=\frac{1}{(a+kb)(a+1)_{(p_{1:k}-1)}}\prod_{i=1}^k\frac{(1-b)_{p_i}(a+bi)}{p_{i:k}-(k-i+1)b}.
    \end{aligned}
\end{equation}
This proves \eqref{1.2.3}. 

When  $p_j=2$ for all $j\in\{ 1, \cdots, n\}$, we have easily 
\begin{equation}
\begin{aligned}
\nonumber
    &\mathcal{I}=\mathbb{E}\left[\sum_{1\leq i_1<i_2<\cdots<i_n<\infty}w_{i_1}^2w_{i_2}^2\cdots w_{i_n}^2\right]=\frac{(1-b)^n(a+b)\cdots(a+b(n-1))}{n!(a+1)\cdots(a+2n-1)} 
\end{aligned}
\end{equation}
proving \eqref{1.2.4}. 
\fin
\subsection{Proof of Proposition \ref{prop.3.3}}
\begin{proof}
By the stick-breaking representation of $\hbox{\rm N-IGP}(a,H)$ and   the 
formula  (3.471.9) 
 in \citet{gradshteyn2014}, we find  
 that the joint distribution of $\{v_i\}_{i=1}^n$ can be written as 
\begin{align}
f(v_1,\cdots,v_n)=\frac{e^aa^{n+1}}{(2\pi)^{\frac{n+1}{2}}}\prod_{i=1}^nv_i^{-\frac{1}{2}}(1-v_i)^{-\frac{n+3-i}{2}}\int_0^{\infty}t^{-\frac{n+3}{2}}e^{-\frac{t}{2}-\frac{a^2}{2t\prod_{i=1}^n(1-v_i)}}dt.
\end{align}
Thus, by using Fubini's theorem, we have 
\begin{align}
 \mathcal{I}=&\mathbb{E}\left[\sum_{n=1}^{\infty}w_n^p\right]=\mathbb{E}\left[\sum_{n=1}^{\infty}v_n^p\prod_{i=1}^{n-1}(1-v_i)^p\right]\nonumber\\
 =&\sum_{n=1}^{\infty}\int_0^{\infty}\int_0^1\cdots\int_0^1\frac{e^aa^{n+1}}{(2\pi)^{\frac{n+1}{2}}}\left(\prod_{i=1}^{n-1}v_i^{-\frac{1}{2}}(1-v_i)^{p-\frac{n+3-i}{2}}\right)v_n^{p-\frac{1}{2}}(1-v_n)^{-\frac{3}{2}}\nonumber\\
&\qquad \qquad t^{-\frac{n+3}{2}}e^{-\frac{t}{2}-\frac{a^2}{2t\prod_{i=1}^n(1-v_i)}}dv_1\cdots dv_ndt\,. \label{5.3}
\end{align}
To evaluate  the above  multiple integral, we shall use the   formula (3.471.2)
 in  \citet{gradshteyn2014}:
\begin{align}
\int_0^1 (1-v)^{\eta-1}v^{\mu-1}e^{-\frac{\beta}{1-v}}dv=\Gamma(\mu)\beta^{\frac{\eta-1}{2}}e^{-\frac{\beta}{2}}W_{\frac{1-2\mu-\eta}{2},\frac{\eta}{2}}(\beta),\label{5.4}
\end{align}
where $W$ is the Whittaker function. For large $\beta$, by the formula  (9.227) in \citet{gradshteyn2014}, we have
\begin{align}
\int_0^1 (1-v)^{\eta-1}v^{\mu-1}e^{-\frac{\beta}{1-v}}dv=\Gamma(\mu)\beta^{-\mu}e^{-\beta}\left(1+o\left(\frac{1}{\beta}\right)\right)\,, \label{5.5}
\end{align}
as $\beta\rightarrow \infty$.  In particular, when $\mu=\frac{1}{2}$,  we have 
\begin{align}
\int_0^1 (1-v)^{\eta-1}v^{-\frac{1}{2}}e^{-\frac{\beta}{1-v}}dv=\Gamma\left(\frac{1}{2}\right)\beta^{-\frac{1}{2}}e^{-\beta}\left(1+o\left(\frac{1}{\beta}\right)\right).\label{5.6}
\end{align}
Denote  $\beta_{i}=\frac{a^2}{2t(1-v_n)\prod_{\ell=1}^i(1-v_\ell)}$ for $i \in \{1,
\cdots, n-1\}$. We  
 rewrite \eqref{5.3} as
\begin{align}
 \mathcal{I}=&\sum_{n=1}^{\infty}\int_0^{\infty}\int_0^1\cdots\int_0^1\frac{e^aa^{n+1}}{(2\pi)^{\frac{n+1}{2}}}t^{-\frac{n+3}{2}}e^{-\frac{t}{2}}\left(\prod_{i=1}^{n-2}v_i^{-\frac{1}{2}}(1-v_i)^{p-\frac{n+3-i}{2}}\right)\nonumber\\
&\quad v_n^{p-\frac{1}{2}}(1-v_n)^{-\frac{3}{2}} \int_0^1v_{n-1}^{-\frac{1}{2}}(1-v_{n-1})^{p-\frac{4}{2}}e^{-\frac{\beta_{n-2}}{(1-v_{n-1})}}dv_{n-1}dv_1\cdots dv_{n-2} dv_n dt.\nonumber
\end{align}
Integrating with respect to $v_{n-1}$ by applying \eqref{5.6}  yields 
\begin{align}
&\mathcal{I}=\sum_{n=1}^{\infty}\int_0^{\infty}\int_0^1\cdots\int_0^1\frac{e^aa^{n+1}}{(2\pi)^{\frac{n+1}{2}}}\Gamma\left(\frac{1}{2}\right)\left(\frac{a^2}{2t(1-v_n)}\right)^{-\frac{1}{2}}\nonumber\\
& \qquad  \left(1+o\left(\frac{1}{a}\right)\right)t^{-\frac{n+3}{2}}e^{-\frac{t}{2}}(1-v_n)^{-\frac{3}{2}} v_n^{p-\frac{1}{2}}\left(\prod_{i=1}^{n-2}v_i^{-\frac{1}{2}}(1-v_i)^{p-\frac{n+2-i}{2}}\right)\nonumber\\
&\qquad 
\int_0^1v_{n-2}^{-\frac{1}{2}}(1-v_{n-2})^{p-\frac{4}{2}}e^{-\frac{\beta_{n-3}}{(1-v_{n-2})}} dv_1\cdots dv_{n-2} dv_ndt.\nonumber
\end{align}
We repeatedly  apply the above procedure    to integrate $v_{n-2}$, $v_{n-3}$, $\cdots$, $v_1$, each time   using    \eqref{5.6}.  
After these computations we   obtain 
\begin{align}
 \mathcal{I}&= \sum_{n=1}^{\infty}\int_0^{\infty}\int_0^1\frac{e^aa^{n+1}}{(2\pi)^{\frac{n+1}{2}}}\left(\Gamma\left(\frac{1}{2}\right)\left(\frac{a^2}{2t(1-v_n)}\right)^{-\frac{1}{2}}\right)^{n-1}\left(1+o\left(\frac{1}{a}\right)\right) \nonumber\\
&\qquad\qquad\qquad \times  t^{-\frac{n+3}{2}}e^{-\frac{t}{2}}v_n^{p-\frac{1}{2}}(1-v_n)^{-\frac{3}{2}}dv_{n}dt\nonumber\\
 &= \sum_{n=1}^{\infty}\int_0^{\infty}\int_0^1\frac{e^aa^2}{2\pi}v ^{p-\frac{1}{2}}(1-v )^{\frac{n-4}{2}}e^{-\frac{a^2}{2t(1-v )}}t^{-2}e^{-\frac{t}{2}}\left(1+o\left(\frac{1}{a}\right)\right)dv dt.\label{5.61}
\end{align}
Again by  using 
Fubini's theorem and  by the fact that $v  \in (0,1)$,   we   take the sum to   obtain  
\begin{align}
 \mathcal{I}&=\int_0^{\infty}\int_0^1\frac{e^aa^2}{2\pi}v ^{p-\frac{1}{2}}\left(\sum_{n=1}^{\infty}(1-v )^{\frac{n-4}{2}}\right)e^{-\frac{a^2}{2t(1-v )}}t^{-2}e^{-\frac{t}{2}}\left(1+o\left(\frac{1}{a}\right)\right)dv dt\nonumber\\
&=\int_0^{\infty}\int_0^1\frac{e^aa^2}{2\pi}v ^{p-\frac{3}{2}}\left(1+(1-v )^{\frac{1}{2}}\right)e^{-\frac{a^2}{2t(1-v  )}}t^{-2}e^{-\frac{t}{2}}\left(1+o\left(\frac{1}{a}\right)\right)dv dt.\label{5.62}
\end{align}
Then,  the result   \eqref{5.01} is obtained by first applying \eqref{5.5} when we  integrate  the integral with respect to 
  $v $, and then by applying   the formula (3.471.9) 
 in \citet{gradshteyn2014} when we integrate  $t$. Here,  we also use the approximation that for fixed $\nu$ and for 
 large $a$, $K_{\nu}(a)=\sqrt{\frac{\pi}{2}}a^{-1/2}e^{-a}\left(1+o\left(\frac{1}{a}\right)\right)$.

\noindent When $p=2$, we want to show that the leading coefficient in \eqref{5.01} is $1$. This needs some more delicate computations.   First, we have 
\begin{align}
&\mathcal{I}=\sum_{n=1}^{\infty}\int_0^{\infty}\int_0^1\cdots\int_0^1\frac{e^aa^{n+1}}{(2\pi)^{\frac{n+1}{2}}}\left(\prod_{i=1}^{n-1}v_i^{-\frac{1}{2}}(1-v_i)^{-\frac{n-1-i}{2}}\right)v_n^{\frac{3}{2}}(1-v_n)^{-\frac{3}{2}} \nonumber\\
&\qquad\qquad\qquad \times t^{-\frac{n+3}{2}}e^{-\frac{t}{2}-\frac{a^2}{2t\prod_{i=1}^n(1-v_i)}}dv_1\cdots dv_ndt.\nonumber
\end{align}
Notice the fact that the power of $v_{n-4}$  in the above integrand  is $-\frac{3}{2}$ and  to compute the integral with respect to $v_{n-4}$ we can use the  following nice integral identity: 
\begin{align}
\int_0^1 (1-v)^{-\frac{3}{2}}v^{-\frac{1}{2}}e^{-\frac{\beta}{1-v}}dv=\Gamma(\frac{1}{2})\beta^{-\frac{1}{2}}e^{-\beta}.
\end{align}
After this  integration with respect to $v_{n-4}$, we obtain an expression for $v_{n-5}$ which  also has this form and we then integrate $v_{n-5}$ and so on.  This procedure can continue until integrating $v_1$.  Hence, we 
 compute  the integrals for  $v_{n-4}$, and then for 
 $v_{n-5}$,  $\cdots$ and then for  $v_1$ recursively to obtain 
\begin{align}
 \mathcal{I}=&\sum_{n=1}^{\infty}\int_0^{\infty}\int_0^1\int_0^1 \int_0^1\int_0^1\frac{e^aa^5}{(2\pi)^{\frac{5}{2}}} (v_{n-3}v_{n-2}v_{n-1})^{-\frac{1}{2}}v_n^{\frac{3}{2}}\nonumber\\
&\qquad\qquad  \times (1-v_{n-3})^{\frac{n-6}{2}}(1-v_{n-2})^{\frac{n-5}{2}} (1-v_{n-1})^{\frac{n-4}{2}}(1-v_n)^{\frac{n-7}{2}}\nonumber\\
&\qquad\qquad \times e^{-\frac{a^2}{2t(1-v_{n-3})(1-v_{n-2})(1-v_{n-1})(1-v_n)}} t^{-\frac{7}{2}}e^{-\frac{t}{2}}dv_{n-3}dv_{n-2}dv_{n-1}dv_ndt.
\end{align}
By Fubini's theorem and by the fact that $v_i \in (0,1)$, we can take the sum first [There is no need to sum up the index $n$ in $v_{n}, v_{n-1}, v_{n-2}, v_{n-3}$
since we can call them by other notations. But 
for consistency we still keep  these notations.] 
\begin{align}
 \mathcal{I}=&\int_0^{\infty}\int_0^1\int_0^1 \int_0^1\int_0^1\frac{e^aa^5}{(2\pi)^{\frac{5}{2}}} (v_{n-3}v_{n-2}v_{n-1})^{-\frac{1}{2}}v_n^{\frac{3}{2}}(1-v_{n-3})^{-\frac{5}{2}}(1-v_{n-2})^{-\frac{4}{2}}\nonumber\\
&\qquad \times(1-v_{n-1})^{-\frac{3}{2}}(1-v_n)^{-\frac{6}{2}}\frac{1+\sqrt{(1-v_{n-3})(1-v_{n-2})(1-v_{n-1})(1-v_n)}}{1-(1-v_{n-3})(1-v_{n-2})(1-v_{n-1})(1-v_n)}\nonumber\\
&\qquad \times e^{-\frac{a^2}{2t(1-v_{n-3})(1-v_{n-2})(1-v_{n-1})(1-v_n)}} t^{-\frac{7}{2}}e^{-\frac{t}{2}}dv_{n-3}dv_{n-2}dv_{n-1}dv_ndt.
\end{align}
Now  we  integrate  $t$ by  using the formula (3.471.9) 
in \citet{gradshteyn2014} to obtain 
\begin{align*}
 \mathcal{I}=&\int_0^1\int_0^1 \int_0^1\int_0^1\frac{2e^aa^{\frac{5}{2}}}{(2\pi)^{\frac{5}{2}}} (v_{n-3}v_{n-2}v_{n-1})^{-\frac{1}{2}}v_n^{\frac{3}{2}}(1-v_{n-3})^{-\frac{5}{4}}(1-v_{n-2})^{-\frac{3}{4}}\nonumber\\
&\qquad (1-v_{n-1})^{-\frac{1}{4}}(1-v_n)^{-\frac{7}{4}}\frac{1+\sqrt{(1-v_{n-3})(1-v_{n-2})(1-v_{n-1})(1-v_n)}}{1-(1-v_{n-3})(1-v_{n-2})(1-v_{n-1})(1-v_n)}\nonumber\\
&\qquad K_{-\frac{5}{2}}\left(\frac{a}{\sqrt{(1-v_{n-3})(1-v_{n-2})(1-v_{n-1})(1-v_n)}}\right)dv_{n-3}dv_{n-2}dv_{n-1}dv_n\,. 
\end{align*}
Approximating  the above modified Bessel function 
$K_{-\frac{5}{2}}$ of the third type  by the formula (8.451.6)  in \citet{gradshteyn2014} we have 
\begin{align*}
 \mathcal{I}=&\int_0^1\int_0^1 \int_0^1\int_0^1\frac{e^aa^2}{(2\pi)^2} (v_{n-3}v_{n-2}v_{n-1})^{-\frac{1}{2}}v_n^{\frac{3}{2}}(1-v_{n-3})^{-\frac{2}{2}}(1-v_{n-2})^{-\frac{1}{2}}\nonumber\\
&\qquad \times (1-v_{n-1})^{-\frac{0}{2}}(1-v_n)^{-\frac{3}{2}}\frac{1+\sqrt{(1-v_{n-3})(1-v_{n-2})(1-v_{n-1})(1-v_n)}}{1-(1-v_{n-3})(1-v_{n-2})(1-v_{n-1})(1-v_n)}\nonumber\\
&\qquad \times e^{-\frac{a^2}{\sqrt{(1-v_{n-3})(1-v_{n-2})(1-v_{n-1})(1-v_n)}}}\left(1+o\left(\frac{1}{a}\right)\right)dv_{n-3}dv_{n-2}dv_{n-1}dv_n.
\end{align*}
To evaluate the above integral we make the following variable substitutions. 
\begin{align*}
\begin{cases}
v_n=1-\frac{a^2}{(y_0+a)^2}\,;\\
v_{n-1}=1-\frac{(y_0+a)^2}{(y_0+y_1+a)^2}\,;\\
v_{n-2}=1-\frac{(y_0+y_1+a)^2}{(y_0+y_1+y_2+a)^2}\,;\\
v_{n-3}=1-\frac{(y_0+y_1+y_2+a)^2}{(y_0+y_1+y_2+y_3+a)^2}\,.
\end{cases} 
\end{align*}
The integral $\mathcal{I}$ can then be written as 
\begin{align}
 \mathcal{I}=&\int_0^1\int_0^1 \int_0^1\int_0^1\frac{e^aa^2}{(2\pi)^2} \left(1-\frac{a^2}{(y_0+a)^2}\right)^{\frac{3}{2}} \left(1-\frac{(y_0+a)^2}{(y_0+y_1+a)^2}\right)^{-\frac{1}{2}}\nonumber\\
&\qquad \times \left(1-\frac{(y_0+y_1+a)^2}{(y_0+y_1+y_2+a)^2}\right)^{-\frac{1}{2}}
\left(1-\frac{(y_0+y_1+y_2+a)^2}{(y_0+y_1+y_2+y_3+a)^2}\right)^{-\frac{1}{2}}\nonumber\\
&\qquad \times \left(\frac{(y_0+y_1+y_2+a)^2}{(y_0+y_1+y_2+y_3+a)^2}\right)^{-\frac{2}{2}}\left(\frac{(y_0+y_1+a)^2}{(y_0+y_1+y_2+a)^2}\right)^{-\frac{1}{2}}\nonumber\\
&\qquad \times \left(\frac{(y_0+a)^2}{(y_0+y_1+a)^2}\right)^{-\frac{0}{2}}\left(\frac{a^2}{(y_0+a)^2}\right)^{-\frac{3}{2}}\frac{1+\frac{a}{(y_0+y_1+y_2+y_3+a)}}{1-\frac{a^2}{(y_0+y_1+y_2+y_3+a)^2}}\nonumber\\
&\qquad \times \frac{2^4a^2}{(y_0+a)(y_0+y_1+a)(y_0+y_1+y_2+a)(y_0+y_1+y_2+y_3+a)^3}\nonumber\\
&\qquad e^{-y_0-y_1-y_2-y_3-a}\left(1+o\left(\frac{1}{a}\right)\right)dy_0dy_1dy_2dy_3.\label{5.7}
\end{align}
When  $a$ is large, we have the following asymptotics: 
\begin{eqnarray*}
&&1-\frac{a^2}{(y_0+a)^2}=\frac{2ay_0+y_0^2}{a^2+2ay_0+y_0^2}=\frac{2y_0}{a}+o\left(\frac{1}{a}\right)\,;\\
&&1-\frac{(y_0+a)^2}{(y_0+y_1+a)^2}=\frac{2y_1}{a}+o\left(\frac{1}{a}\right)\,;\\
&&1-\frac{(y_0+y_1+a)^2}{(y_0+y_1+y_2+a)^2}=\frac{2y_2}{a}+o\left(\frac{1}{a}\right)\,;\\
&&1-\frac{(y_0+y_1+y_2+a)^2}{(y_0+y_1+y_2+y_3+a)^2}=\frac{2y_3}{a}+o\left(\frac{1}{a}\right)\,;\\
&&\frac{a^2}{(y_0+a)^2}=\frac{(y_0+a)^2}{(y_0+y_1+a)^2}=\frac{(y_0+y_1+a)^2}{(y_0+y_1+y_2+a)^2}\\
&&\qquad\qquad \quad  =\frac{(y_0+y_1+y_2+a)^2}{(y_0+y_1+y_2+y_3+a)^2}=1+o\left(\frac{1}{a}\right)\,;\\
&&1+\frac{a}{(y_0+y_1+y_2+y_3+a)}=2+o\left(\frac{1}{a}\right)\,;\\
&&1-\frac{a^2}{(y_0+y_1+y_2+y_3+a)^2}=\frac{2(y_0+y_1+y_2+y_3)}{a}+o\left(\frac{1}{a}\right)\,;\\
&&\frac{2^4a^2}{(y_0+a)(y_0+y_1+a)(y_0+y_1+y_2+a)(y_0+y_1+y_2+y_3+a)^3}\\
&&\qquad\qquad\qquad =\frac{2^4}{a^4}+o\left(\frac{1}{a^4}\right)\,. 
\end{eqnarray*}
Substituting the above asymptotics into    \eqref{5.7},
we see that when $a$ is large the integral $\mathcal{I}$ is 
\begin{align}
 \mathcal{I}=&\int_0^1\int_0^1 \int_0^1\int_0^1\frac{e^aa^2}{(2\pi)^2} \left(\frac{2y_0}{a}\right)^{\frac{3}{2}}\left(\frac{2y_1}{a}\right)^{-\frac{1}{2}}\left(\frac{2y_2}{a}\right)^{-\frac{1}{2}}
\left(\frac{2y_3}{a}\right)^{-\frac{1}{2}} \nonumber\\
&\qquad   \times\frac{a}{(y_0+y_1+y_2+y_3)} \frac{2^4}{a^4}e^{-y_0-y_1-y_2-y_3-a}\left(1+o\left(\frac{1}{a}\right)\right)dy_0dy_1dy_2dy_3\nonumber\\
  =&\int_0^1\int_0^1 \int_0^1\int_0^1\frac{4}{a\pi^2}\frac{y_0^{\frac{3}{2}}y_1^{-\frac{1}{2}}y_2^{-\frac{1}{2}}
y_3^{-\frac{1}{2}}}{y_0+y_1+y_2+y_3}\nonumber\\
&\qquad  \times e^{-y_0-y_1-y_2-y_3}\left(1+o\left(\frac{1}{a}\right)\right)dy_0dy_1dy_2dy_3\,. \label{5.8}
\end{align}
The above integral can be further evaluated by  making  the following  variable substitutions: 
\begin{align*}
\begin{cases}
y_0=t_0\,;\\
y_0+y_1=t_1\,;\\
y_0+y_1+y_2=t_2\,;\\
y_0+y_1+y_2+y_3=t_3\,.
\end{cases} 
\end{align*}
With these substitutions,  \eqref{5.8} can be written as 
\begin{align}
 \mathcal{I}
=&\int_0^{\infty}\int_0^{t_3} \int_0^{t_2}\int_0^{t_1}\frac{4}{a\pi^2}\frac{t_0^{\frac{3}{2}}(t_1-t_0)^{-\frac{1}{2}}(t_2-t_1)^{-\frac{1}{2}}
(t_3-t_2)^{-\frac{1}{2}}}{t_3}\nonumber\\
&\qquad\qquad \times e^{-t_3}\left(1+o\left(\frac{1}{a}\right)\right)dt_0dt_1dt_2dt_3.\nonumber
\end{align}
Now we    integrate    $t_0$ by letting $t_0=ut_1$,
 \begin{align}
 \mathcal{I}
&=\int_0^{\infty}\int_0^{t_3} \int_0^{t_2}\int_0^1\frac{4}{a\pi^2}\frac{t_1(ut_1)^{\frac{3}{2}}(t_1-ut_1)^{-\frac{1}{2}}(t_2-t_1)^{-\frac{1}{2}}
(t_3-t_2)^{-\frac{1}{2}}}{t_3}\nonumber\\
& \qquad\qquad \times e^{-t_3}\left(1+o\left(\frac{1}{a}\right)\right)dudt_1dt_2dt_3\nonumber\\
&=\int_0^{\infty}\int_0^{t_3} \int_0^{t_2}\frac{4}{a\pi^2}\frac{
{ \Gamma(\frac{5}{2}) \Gamma(\frac{1}{2})} }{\Gamma(\frac{6}{2})}\frac{t_1^2(t_2-t_1)^{-\frac{1}{2}}
(t_3-t_2)^{-\frac{1}{2}}}{t_3}\nonumber\\
& \qquad\qquad \times e^{-t_3}\left(1+o\left(\frac{1}{a}\right)\right)dt_1dt_2dt_3.\nonumber
\end{align}
Similarly, we integrate  $t_1$, $t_2$, $t_3$ one by one 
in this order to obtain 
  \begin{align}
 \mathcal{I}&=\frac{4}{a\pi^2}{  \frac{\Gamma(\frac{5}{2})  \Gamma(\frac{1}{2})}{\Gamma(\frac{6}{2})}\frac{\Gamma(\frac{6}{2}) \Gamma(\frac{1}{2})}{\Gamma(\frac{7}{2})}\frac{\Gamma(\frac{7}{2}) \Gamma(\frac{1}{2})}{\Gamma(\frac{8}{2})}\Gamma(\frac{6}{2})} \left(1+o\left(\frac{1}{a}\right)\right)\nonumber\\
&=\frac{1}{a}+o\left(\frac{1}{a}\right),
\end{align}
completing the proof    of \eqref{5.02}.

To prove  the results \eqref{5.9} and \eqref{5.10}, we
denote 
\[
\mathcal{L}=\mathbb{E}\left[\sum_{1 \leq i_1<i_2<\cdots <i_k<\infty}w_{i_1}^{p_1}w_{i_2}^{p_2}\cdots w_{i_k}^{p_k}\right]\,.
\]
We have 
\begin{align}
&\mathcal{L}=\sum_{1\leq i_1<i_2<\cdots<i_k<\infty}\mathbb{E}\Bigg[v_{i_1}^{p_1}(1-v_{i_1})^{p_{2:k}}\prod_{\ell_1=1}^{i_1-1}(1-v_{\ell_1})^{p_{1:k}}v_{i_2}^{p_2}(1-v_{i_2})^{p_{3:k}}\nonumber\\
    &\qquad\qquad \times 
    \prod_{\ell_2=i_1+1}^{i_2-1}(1-v_{\ell_2})^{p_{2:k}}\cdots v_{i_m}^{p_m}(1-v_{i_m})^{p_{m+1:k}}\prod_{\ell_m=i_{m-1}+1}^{i_m-1}(1-v_{\ell_m})^{p_{m:k}}\nonumber\\
 &\qquad\qquad \cdots v_{i_{k-1}}^{p_{k-1}}(1-v_{i_{k-1}})^{p_k}\prod_{\ell_{k-1}=i_{k-2}+1}^{i_{k-1}-1}(1-v_{\ell_{k-1}})^{p_k-1:k}\nonumber\\
 &\qquad\qquad v_{i_k}^{p_k}\prod_{\ell_k=i_{k-1}+1}^{i_k-1}(1-v_{\ell_k})^{p_k}\Bigg].\label{5.11}
\end{align}
Using the explicit form of  the joint density of $v_1,\cdots,v_{i_k}$, we have
\begin{align}
 \mathcal{L} &=\sum_{1\leq i_1<i_2<\cdots<i_k<\infty}\int_0^{\infty}\int_0^1\cdots\int_0^1\frac{e^aa^{i_k+1}}{(2\pi)^{\frac{i_k+1}{2}}}t^{-\frac{i_k+3}{2}}e^{-\frac{t}{2}} e^{-\frac{a^2}{2t\prod_{j=1}^{i_k}(1-v_j)}}\nonumber\\
&\qquad\qquad \times \prod_{\ell_1=1}^{i_1-1}v_{\ell_1}^{-\frac{1}{2}}(1-v_{\ell_1})^{p_{1:k}-\frac{i_k+3-\ell_1}{2}}v_{i_1}^{p_1-\frac{1}{2}}(1-v_{i_1})^{p_{2:k}-\frac{i_k+3-i_1}{2}}  \nonumber\\
&\qquad\qquad  \cdots \prod_{\ell_m=i_{m-1}+1}^{i_m-1}v_{\ell_m}^{-\frac{1}{2}}(1-v_{\ell_m})^{p_{m:k}-\frac{i_k+3-\ell_m}{2}}v_{i_m}^{p_m-\frac{1}{2}}(1-v_{i_m})^{p_{m+1:k}-\frac{i_k+3-i_m}{2}}   \nonumber\\
&\qquad\qquad \cdots \prod_{\ell_k=i_{k-1}+1}^{i_k-1}v_{\ell_k}^{-\frac{1}{2}}(1-v_{\ell_k})^{p_{k}-\frac{i_k+3-\ell_k}{2}}v_{i_k}^{p_k-\frac{1}{2}}(1-v_{i_k})^{-\frac{3}{2}} dv_1\cdots dv_{i_k}dt.\label{5.12}
\end{align}
Notice that the integrals of $v_{i_{k-1}+1}, v_{i_{k-1}+2}, \cdots v_{i_k}$ with the sum of $i_k$ from $i_{k-1}+1$ to $\infty$ is the same form as \eqref{5.3}.
 Thus, by the computation  \eqref{5.61}, we have
\begin{align}
 \mathcal{L} =&\sum_{1\leq i_1<i_2<\cdots<i_k<\infty}\int_0^{\infty}\int_0^1\cdots\int_0^1\frac{e^aa^{i_k+1}}{(2\pi)^{\frac{i_k+1}{2}}}t^{-\frac{i_k+3}{2}}e^{-\frac{t}{2}} e^{-\frac{a^2}{2t(1-v_{i_k})\prod_{j=1}^{i_{k-1}}(1-v_j)}}\nonumber\\
&\qquad \prod_{\ell_1=1}^{i_1-1}v_{\ell_1}^{-\frac{1}{2}}(1-v_{\ell_1})^{p_{1:k}-\frac{i_k+3-\ell_1}{2}}v_{i_1}^{p_1-\frac{1}{2}}(1-v_{i_1})^{p_{2:k}-\frac{i_k+3-i_1}{2}}\cdots \nonumber\\
&\qquad \prod_{\ell_{k-1}=i_{k-2}+1}^{i_{k-1}-1}v_{\ell_{k-1}}^{-\frac{1}{2}}(1-v_{\ell_{k-1}})^{p_{k}-\frac{i_k+3-\ell_{k-1}}{2}}v_{i_{k-1}}^{p_{k-1}-\frac{1}{2}}(1-v_{i_{k-1}})^{-\frac{3}{2}}\nonumber\\
&\qquad  \left(\Gamma(\frac{1}{2})\left(\frac{a^2}{2t(1-v_{i_k})\prod_{j=1}^{i_{k-1}}(1-v_j)}\right)^{-\frac{1}{2}}\right)^{i_k-i_{k-1}-1}\nonumber\\
&\qquad \times \left(1+o\left(\frac{1}{a}\right)\right)dv_1\cdots dv_{i_{k-1}}dv_{v_{i_k}}dt\label{5.13}\\
 =&\sum_{1\leq i_1<i_2<\cdots<i_{k-1}<\infty}\sum_{i_k=i_{k-1}+1}^{\infty}\int_0^{\infty}\int_0^1\cdots\int_0^1\frac{e^aa^{i_{k-1}+2}}{(2\pi)^{\frac{i_{k-1}+2}{2}}}t^{-\frac{i_{k-1}+4}{2}}e^{-\frac{t}{2}} \nonumber\\
&\qquad \prod_{\ell_1=1}^{i_1-1}v_{\ell_1}^{-\frac{1}{2}}(1-v_{\ell_1})^{p_{1:k}-\frac{i_{k-1}+4-\ell_1}{2}}v_{i_1}^{p_1-\frac{1}{2}}(1-v_{i_1})^{p_{2:k}-\frac{i_{k-1}+4-i_1}{2}}\cdots \nonumber\\
&\qquad \prod_{\ell_{k-1}=i_{k-2}+1}^{i_{k-1}-1}v_{\ell_{k-1}}^{-\frac{1}{2}}(1-v_{\ell_{k-1}})^{p_{k}-\frac{i_{k-1}+4-\ell_{k-1}}{2}}v_{i_{k-1}}^{p_{k-1}-\frac{1}{2}}(1-v_{i_{k-1}})^{-\frac{4}{2}}\nonumber\\
&\qquad  v_{i_k}^{p_k}(1-v_{i_k})^{\frac{i_k-i_{k-1}-4}{2}}e^{-\frac{a^2}{2t(1-v_{i_k})\prod_{j=1}^{i_{k-1}}(1-v_j)}}\nonumber\\
&\qquad\qquad \times \left(1+o\left(\frac{1}{a}\right)\right)dv_{v_{i_k}}dv_1\cdots dv_{i_{k-1}}dt.\label{5.14}
\end{align}
By a similar calculation to   that of  \eqref{5.62},
\begin{align}
 \mathcal{L}&=
 \sum_{1\leq i_1<i_2<\cdots<i_{k-1}<\infty}\int_0^{\infty}\int_0^1\cdots\int_0^1{O}\left(\frac{1}{a}\right)\frac{e^aa^{i_{k-1}+1}}{(2\pi)^{\frac{i_{k-1}+1}{2}}}t^{-\frac{i_{k-1}+3}{2}}e^{-\frac{t}{2}} \nonumber\\
&\qquad \times \prod_{\ell_1=1}^{i_1-1}v_{\ell_1}^{-\frac{1}{2}}(1-v_{\ell_1})^{p_{1:k}-\frac{i_{k-1}+3-\ell_1}{2}}v_{i_1}^{p_1-\frac{1}{2}}(1-v_{i_1})^{p_{2:k}-\frac{i_{k-1}+3-i_1}{2}}  \nonumber\\
&\qquad \cdots  \prod_{\ell_{k-1}=i_{k-2}+1}^{i_{k-1}-1}v_{\ell_{k-1}}^{-\frac{1}{2}}(1-v_{\ell_{k-1}})^{p_{k}-\frac{i_{k-1}+3-\ell_{k-1}}{2}}v_{i_{k-1}}^{p_{k-1}-\frac{1}{2}}(1-v_{i_{k-1}})^{-\frac{3}{2}}\nonumber\\
 &\qquad \times e^{-\frac{a^2}{2t\prod_{j=1}^{i_{k-1}}(1-v_j)}}dv_{v_{i_k}}dv_1\cdots dv_{i_{k-1}}dt.\label{5.15}
\end{align}
We can perform the analogous computations for $i_{k-1}$,
$i_{k-2}$, $\cdots$, $i_1$ in this order repeatedly  to obtain 
 \eqref{5.9}.

When $p_1=\cdots=p_k=2$, similar computations to   that in the  proof of \eqref{5.02} can be carried out  to obtain \eqref{5.10}. 
\end{proof}
\subsection{Proof of Proposition \ref{prop.3.4}}
\begin{proof}
By   the identities   $\Gamma(c,x)=e^{-x}x^c\int_0^{\infty}e^{-xu}(1+u)^{c-1}du$ and $\sum_{j=0}^{\infty}\frac{(n)_{j}}{j!}x^j=(1-x)^n$, we can rewrite 
 the joint density of stick-breaking weights $v_1,\cdots,v_n$
 as 
\begin{align}
&f(v_1,\cdots,v_n)=\frac{a^n\sigma^{n-1}}{[\Gamma(1-\sigma)]^n\Gamma(n\sigma)}\prod_{i=1}^nv_i^{-\sigma}(1-v_i)^{-(n-i)\sigma-1}e^{-\frac{a}{\prod_{i=1}^n(1-v_i)^{\sigma}}}\nonumber\\
&\qquad\qquad\qquad\qquad  \times
\int_0^{\infty}(1-(1+t)^{-\frac{1}{\sigma}})^{n\sigma-1}(1+t)^{n-1}e^{-\frac{at}{\prod_{i=1}^n(1-v_i)^{\sigma}}}dt.\label{6.7}
\end{align}
We make the substitution  $t=\frac {\prod_{i=1}^n(1-v_i)^{\sigma}s}{a}$ in the above integral. Then,  
when $a$ is large,  namely, when $t$ is small, we have 
\[
(1+t)^{-\frac{1}{\sigma}} 
\asymp 1-\frac{t}{\sigma} =  1-\frac{\prod_{i=1}^n(1-v_i)^{\sigma}}{\sigma a}s\,, 
\]
where and throughout this paper we use $\mu\asymp \nu$ to represent
  $\lim \frac{\mu}{\nu}=1$.
  
The integral in \eqref{6.7} is then approximated by 
\begin{align}
\int_0^{\infty}(1-(1+t)^{-\frac{1}{\sigma}})^{n\sigma-1}(1+t)^{n-1}e^{-\frac{at}{\prod_{i=1}^n(1-v_i)^{\sigma}}}dt \asymp \frac{\prod_{i=1}^n(1-v_i)^{n\sigma^2}\Gamma(n\sigma)}{\sigma^{n\sigma-1}a^{n\sigma}}.\label{6.8}
\end{align}
Thus, for large $a$,   the joint density of $v_1,\cdots,v_n$ 
has the following asymptotics: 
\begin{align}
&f(v_1,\cdots,v_n)\asymp \frac{(a\sigma)^{n-n\sigma}}{[\Gamma(1-\sigma)]^n}\prod_{i=1}^nv_i^{-\sigma}(1-v_i)^{n\sigma^2-(n-i)\sigma-1}e^{-\frac{a}{\prod_{i=1}^n(1-v_i)^{\sigma}}}.\label{6.9}
\end{align}
Now  the  equalities \eqref{6.3} and \eqref{6.4} follows from  the same arguments as that  in the proof of Proposition \ref{prop.3.3}  and from the use of  the following identity, which holds true 
for any $q\in \mathbb{R}$:
\begin{align}
&\int_0^1x^{-\sigma}(1-x)^qe^{-\frac{a}{(1-x)^{\sigma}}}dx\nonumber\\
&\qquad =\frac{e^{-a}}{a\sigma}\int_0^{\infty}(1-(\frac{a}{a+s})^{\frac{1}{\sigma}})^{-\sigma}(\frac{a}{a+s})^{\frac{q+1}{\sigma}+1}e^{-s}ds\nonumber\\
&\qquad = e^{-a}(a\sigma)^{\sigma-1}(1+o(\frac{1}{a}))\int_0^1s^{-\sigma}e^{-s}ds\nonumber\\
&\qquad=e^{-a}(a\sigma)^{\sigma-1}\Gamma(1-\sigma)\left(1+o\left(\frac{1}{a}\right)\right),\label{6.10}
\end{align}
where the last equality   follows from 
the substitution  $s=\frac{a}{(1-x)^{-\sigma}}-a$ and the following asymptotics:
\[\left(\frac{a}{a+s}\right)^{\frac{1}{\sigma}}=\left(1-\frac{s}{a+s}\right)^{\frac{1}{\sigma}}=1-\frac{s}{\sigma(a+s)}+o\left(\frac{1}{a}\right)\,.
\]

To obtain the exact asymptotics in the case  when $\sigma=\frac{1}{m}$ and $p=p_1=\cdots=p_k=2$, we first prove   \eqref{6.5} using 
 the same argument    
as that in the proof of    \eqref{5.02}. The only differences are as follows. First, we   integrate   $v_{n-m}$, $v_{n-m-1}$, $\cdots$,   $v_1$    recursively
in this order by using \eqref{6.10}.  After these integrations, it remains to  integrate  the variables  $v_{n-m+1},\cdots,v_n$. This multiple integral is now evaluated 
simultaneously by using   the  substitution 
\[
v_{n-i}=1-\left(\frac{a+y_0+\cdots+y_{i-1}}{a+y_0+\cdots+y_i}\right)^{\frac{1}{\sigma}},  \qquad  i =0,\cdots,m-1 \,.
\]
 The identity  \eqref{6.6} follows from  \eqref{6.5} by  the same argument as that in the proof of \eqref{5.10}.
\end{proof}
\subsection{Proof of Proposition \ref{prop.3.5}}
\begin{proof}
When $P \sim \GDP(a,r,H)$, we will prove the weak convergence of $D_a$ in the part (i) of Theorem \ref{theorem 2.17}.
Combining with the fact that $\GDP(a,r,H)$ also admits the general stick-breaking representation as in \eqref{1.1} and \eqref{1.111}, we can obtain the  desired results by using  the same argument as that in the proof of \eqref{e.3.19} in Theorem \ref{theorem 0.1}.
\end{proof}

\subsection{Proof of Theorem \ref{law of large number}}
\begin{proof}
\begin{align}
 &\mathbb{P}\left(|P(A)-H(A)|>\epsilon\right) 
\le \frac{\EE \left[|P(A)-H(A)|^m\right]}{\epsilon^m}\nonumber\\
 &\qquad\qquad=\frac{\EE \left[|\sum_{i=1}^{\infty}w_i\left(\delta_{\theta_i}(A)-H(A)\right)|^m\right]}{\epsilon^m}\nonumber\\
&\qquad\qquad=\sum \frac{c(p_1,\cdots,p_k)}{\epsilon^m}\prod_{i=1}^k\EE \left[|\delta_{\theta_i}(A)-H(A)|^{p_i}\right]\nonumber\\
&\qquad\qquad\qquad\qquad \times  \mathbb{E}\left[\sum_{1\leq i_1<i_2<\cdots<i_k<\infty}w_{i_1}^{p_1}w_{i_2}^{p_2}\cdots w_{i_k}^{p_k}\right]  \label{adde.2.16}
\,, 
\end{align}
where the first 
 sum is taken over all combinations of nonnegative integers $\{p_1, \cdots, p_k\}$   such that  $k \in \{1, \cdots, m\}$ and  $\sum_{i=1}^k p_i=m$ and   $c(p_1,\cdots,p_k)$ are the corresponding constants. By the discussion of Case 1 in the proof of Theorem \ref{theorem 0.1}, it is easy to see that $p_i \geq 2$ for all $i$ and thus $k \leq \frac{m}{2}$.

First, assume  $P$ is one of $\DP(a,H)$,  $\PDP(a,b,H)$,  $\N-IGP(a,H)$,  $\NGGP(\sigma,a,H)$, $\GDP(a,r,H)$.  We choose   $m=\lfloor \frac{4}{\tau} \rfloor$, where $\lfloor x \rfloor$ is the smallest integer that is greater  than or equal to $x$. Then,
 from Propsition \ref{prop.3.2}-\ref{prop.3.5}, we have 
\begin{align*}
 \sum\mathbb{E}\left[\sum_{1\leq i_1<i_2<\cdots<i_k<\infty}w_{i_1}^{p_1}w_{i_2}^{p_2}\cdots w_{i_k}^{p_k}\right] \asymp \frac{1}{a^{m-k}}\leq \frac{1}{a^{\frac{m}{2}}  }=\frac{1}{n^{\frac{m\tau}{2}}}\leq\frac{1}{n^2}\,.  
\end{align*}
If $p \sim \DPG(g_a,H)$,  then we  can choose $m$ 
such that  for all $1\le k\le m/2$, 
\[
\sum_{1\le i\le k, p_1+\cdots+p_k=m} q_{p_i}- kq_1  
\ge \frac{m (q_2-q_1)}{2}  \geq \frac{2}{\tau}\,. 
\]
 Then, from Proposition \ref{prop.3.1},
\begin{align*}
\sum \mathbb{E}\left[\sum_{1\leq i_1<i_2<\cdots<i_k<\infty}w_{i_1}^{p_1}w_{i_2}^{p_2}\cdots w_{i_k}^{p_k}\right] \asymp \frac{1}{a^{\sum_{i=1}^k q_{p_i}-kq_1}}\leq\frac{1}{n^2}\,.
\end{align*} 
 Since the series $\sum_{n=1}^{\infty}\frac{1}{n^2}$ converges, it follows 
\begin{align}
\sum_{n=1}^{\infty}\mathbb{P}\left(|P(A)-H(A)|>\epsilon\right) <\infty\ 
\label{8.1}
\end{align}
for any of the processes presented in the theorem. 
This  implies \eqref{e.2.9}  by the    Borel-Cantelli lemma.
\end{proof}
\subsection{Proof of Theorem \ref{theorem 0.1}}
Before we proceed to the proof of   Theorem \ref{theorem 0.1},   
we need a  preparatory result  about   the joint moments of multivariate normal distribution. To state this result, we introduce the following notations.
Let $n$ be a positive integer and let 
$\vec{p}=(p_{ij}, 1\le i<j\le n)$ be a multi-index. Denote
\begin{equation}
|\vec{p}|=\sum_{1\le i<j\le n} p_{ij}
\end{equation}
and 
denote 
\begin{equation}
|\vec{p}|_m= 
\begin{cases}
 \sum_{j>1}p_{1j}  &\qquad \hbox{when $m=1$,}\\ 
 \sum_{j>m}p_{mj}+\sum_{i<m}p_{im}  &\qquad \hbox{when $m= 2, \cdots, n-1$}, \\
 \sum_{i<n}p_{in}&\qquad \hbox{when $m=n$}\,. 
\end{cases} 
\end{equation}
The following proposition is about the joint moments of Gaussian random variables.
Similar or more general results may be found in literature 
under  the terminology of ``Feynman diagram"
(e.g. \citet[Theorem 5.7]{hu2017book} and references therein).  
But we could not find the exact  result we need. So,  we give 
 the following proposition. 
\begin{prop}\label{prop.6.4}
Let the random vector $(X_1,X_2,\cdots,X_n)$ follow  a multivariate normal distribution $N_n(0,\Sigma)$, where $ \Sigma =\left(\sigma_{ij}=\EE(X_iX_j)\right)_{1\le i,j\le n}$ and $\si_{ii}=\sigma_i^2$.
For any nonnegative integers $r_1,\cdots, r_n$, the joint $(r_1,\cdots, r_n)$ moments of $(X_1,X_2,\cdots,X_n)$ is given by the following formulas. 
\begin{enumerate}
\item[(i)] When $\sum_{\ell=1}^n r_\ell$ is an odd integer,  we have   
\begin{equation}
\mathbb{E}\left[X_1^{r_1}\cdots X_n^{r_n}\right]=0\,. 
\end{equation}
\item[(ii)] When   $\sum_{\ell=1}^n r_\ell$ is an even integer, we have 
(we use the convention that $0^0=1$)
\begin{align}
     \mathbb{E}&\left[X_1^{r_1} \cdots X_n^{r_n}\right]\nonumber\\
     =&\sum_{\vec{p} \in \cC}\frac{r_1! \cdots r_n!\ 
     \prod_{m=1}^n \sigma_m^{r_m-|\vec{p}|_m}  \ \prod_{1\le i<j\le n} 
     \si_{ij}^{p_{ij}} }{
     2^{\frac{|r|}2  -|\vec{p}|}\ \prod_{m=1}^n ((r_m-|\vec{p}|_m)/2 )! \ \prod_{
     1\le i<j\le n}p_{ij}!}\,, 
    \label{0.8}
\end{align}
where  $|r|=r_1+\cdots+r_n$ and 
\begin{align}
\cC=\bigg\{  \vec{p}= \{p_{ij}, &1\le i< j\le n\}\,; \quad 
0\le p_{ij}\le r_i\wedge r_j\,,  \nonumber\\
&
r_m- |\vec{p}|_m \ \hbox{  $m=1, \cdots, n$,  \ are   all even}\bigg\} \,. 
\label{e.3.13}
\end{align}
\end{enumerate} 
\end{prop}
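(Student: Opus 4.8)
The plan is to reduce the statement to the combinatorial (Wick/Isserlis) form of the Gaussian moment formula and then to regroup the pairings according to how many of them join each pair of ``colours''. Part (i) is immediate: when $|r|=\sum_{\ell=1}^n r_\ell$ is odd, the law of $(X_1,\dots,X_n)$ is invariant under $X\mapsto -X$ while the monomial $X_1^{r_1}\cdots X_n^{r_n}$ changes sign, so $\EE[X_1^{r_1}\cdots X_n^{r_n}]=-\EE[X_1^{r_1}\cdots X_n^{r_n}]=0$.

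For part (ii), I would view the monomial as a product of $|r|$ factors (``slots''), $r_1$ of them of colour $1$, \dots, $r_n$ of them of colour $n$, and invoke Wick's theorem for centred Gaussian vectors (cf.\ \citet[Theorem 5.7]{hu2017book}): the expectation equals the sum, over all perfect matchings of the $|r|$ slots, of the product over matched pairs of the covariance of the two coordinates involved, where a pair of slots of colours $i$ and $j$ carries weight $\sigma_{ij}$ (and $\sigma_{mm}=\sigma_m^2$). Since this precise form is not among the cited references as quoted, I would include a short self-contained proof by Gaussian integration by parts — repeatedly applying $\EE[X_1 F(X)]=\sum_j\sigma_{1j}\EE[\partial_j F(X)]$ to $F(X)=X_1^{r_1-1}X_2^{r_2}\cdots X_n^{r_n}$ and inducting on $|r|$ — so the proposition does not rest on an uncited identity.

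The next step is to stratify the set of matchings. For a given matching let $p_{ij}$, $1\le i<j\le n$, be the number of matched pairs whose slots have colours $i$ and $j$, and set $q_m=(r_m-|\vec p|_m)/2$, where $|\vec p|_m=\sum_{j>m}p_{mj}+\sum_{i<m}p_{im}$ is the number of cross-colour pairs incident to colour $m$. Counting the slots of colour $m$ shows that $q_m$ is exactly the number of monochromatic colour-$m$ pairs, so $r_m-|\vec p|_m$ is a nonnegative even integer; hence the profile $\vec p$ lies in the set $\cC$ of \eqref{e.3.13}, and conversely every $\vec p\in\cC$ is realised. Thus $\EE[X_1^{r_1}\cdots X_n^{r_n}]=\sum_{\vec p\in\cC}N(\vec p)\prod_{1\le i<j\le n}\sigma_{ij}^{p_{ij}}\prod_{m=1}^n\sigma_m^{2q_m}$, where $N(\vec p)$ is the number of matchings with profile $\vec p$ and $\sigma_m^{2q_m}=\sigma_m^{r_m-|\vec p|_m}$.

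Finally I would compute $N(\vec p)$: for each colour $m$ choose which of its $r_m$ slots serve each incident cross-colour class and which serve the monochromatic class, in $r_m!/\bigl(\prod_{\ell\ne m}p_{\{m,\ell\}}!\,(2q_m)!\bigr)$ ways (here $p_{\{m,\ell\}}$ denotes $p_{ij}$ with indices sorted); for each unordered colour pair $\{i,j\}$ match the selected $p_{ij}$ slots of colour $i$ to those of colour $j$ in $p_{ij}!$ ways; and for each colour $m$ pair up its remaining $2q_m$ slots in $(2q_m)!/(2^{q_m}q_m!)$ ways. Multiplying these and simplifying via $\prod_m\prod_{\ell\ne m}p_{\{m,\ell\}}!=\prod_{i<j}(p_{ij}!)^2$ and $\sum_m q_m=\tfrac12|r|-|\vec p|$ turns $N(\vec p)$ into exactly the coefficient appearing in \eqref{0.8}, and summing over $\vec p\in\cC$ gives the claim. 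The one place that needs genuine care is this bookkeeping — in particular keeping the merged quantity $|\vec p|_m$ consistent between the cross-pair and monochromatic-pair counts, and checking that the powers of $2$ and the factorials assemble into the denominator $2^{|r|/2-|\vec p|}\prod_m((r_m-|\vec p|_m)/2)!\prod_{i<j}p_{ij}!$; everything else is either the symmetry argument or a standard Gaussian computation.
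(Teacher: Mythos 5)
Your proposal is correct, but it takes a genuinely different route from the paper. The paper proves the proposition by computing the moment generating function twice: once as the power series $\sum \frac{t_1^{r_1}\cdots t_n^{r_n}}{r_1!\cdots r_n!}\EE[X_1^{r_1}\cdots X_n^{r_n}]$ and once by expanding $\exp\{\frac12\sum_\ell t_\ell^2\sigma_\ell^2+\sum_{i<j}t_it_j\sigma_{ij}\}$ as a product of exponential series indexed by $(p_\ell)_\ell$ and $(p_{ij})_{i<j}$, and then matching the coefficient of $t_1^{r_1}\cdots t_n^{r_n}$; the combinatorial coefficient in \eqref{0.8} then falls out of the multinomial bookkeeping $r_\ell=2p_\ell+|\vec p|_\ell$ with no enumeration of pairings. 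You instead invoke the Wick/Isserlis representation as a sum over perfect matchings (which you rightly offer to establish by Gaussian integration by parts, since the precise multivariate form is what the paper itself says it could not locate), stratify the matchings by the profile $\vec p$, and count $N(\vec p)$ directly; your count $N(\vec p)=\prod_m r_m!\,/\bigl(2^{|r|/2-|\vec p|}\prod_m q_m!\prod_{i<j}p_{ij}!\bigr)$ with $q_m=(r_m-|\vec p|_m)/2$ and $\sum_m q_m=\tfrac{|r|}{2}-|\vec p|$ is correct and reproduces \eqref{0.8} exactly. The trade-off: the MGF route is shorter and avoids the delicate enumeration, while your route is more transparent about where each factor in the coefficient comes from and yields the Wick expansion as a by-product. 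One cosmetic caveat shared by both arguments: the set $\cC$ as written admits profiles with $r_m-|\vec p|_m<0$ (a negative even integer), which are not realised by any matching and whose contribution must be read as zero under the stated conventions; your phrase ``conversely every $\vec p\in\cC$ is realised'' should be restricted to those $\vec p$ with $r_m-|\vec p|_m\ge 0$ for all $m$.
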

\begin{proof}We shall use the moment generating function 
 to prove it.
On   one hand, we see  
\begin{align}
    \mathbb{E}\left[e^{\sum_{\ell=1}^n t_\ell X_\ell}\right]=\sum_{r_1,\cdots,r_n=0}^\infty \frac{t_1^{r_1}\cdots t_n^{r_n}}{r_1!\cdots r_n!}\mathbb{E}\left[X_1^{r_1}\cdots X_n^{r_n}\right]\,. \label{0.6}
\end{align}
On the other hand, by the moment generating function formula for multivariate 
normal variables, we have  
\begin{align}
    &\mathbb{E}\left[e^{\sum_{\ell=1}^n t_\ell X_\ell}\right]=e^{\frac{1}{2}\mathbb{E}\left[(\sum_{\ell=1}^n t_\ell X_\ell)^2\right]}\\
    &\qquad =\exp\left\{\frac{1}{2}\sum_{\ell=1}^n \mathbb{E}\left[(t_\ell X_\ell)^2\right]+\sum_{1\le i<j\le n}\mathbb{E}\left[t_it_jX_iX_j\right]\right\}\nonumber\\
    &\qquad =\prod_{\ell=1}^n \exp\left\{\frac{t_\ell^2 \si_\ell^2 }{2}   \right\}
   \prod_{1\le i<j \le n } \exp\left\{t_it_j \si_{ij} \right\}\nonumber\\
    &\qquad =\sum_{{0\le p_l, p_{ij}<\infty\atop  1\le \ell \le n}\atop{ 1\le i<j\le n}}
    \prod_{\ell=1}^n \frac{(t_\ell\sigma_\ell)^{2p_\ell}}{2^{p_\ell}p_\ell!} \prod_{ 1\le i<j \le n }\frac{(t_it_j\sigma_{ij})^{p_{ij}}}{p_{ij}!}\nonumber\\
    &\qquad =\sum_{{0\le p_l, p_{ij}<\infty\atop  1\le \ell\le n}\atop{ 1\le i<j\le n}}
     \frac{t_1^{2p_1+|\vec{p}|_1}\cdots 
     t_n^{2p_n+|\vec{p}|_m}}{2^{p_1+\cdots+p_n}} \prod_{\ell=1}^n\frac{\sigma_\ell^{2p_\ell}}{p_\ell!}\prod_{1\le i<j\le n}\frac{\sigma_{ij}^{p_{ij}}}{p_{ij}!}\,. \label{0.7} 
\end{align}
Comparing the coefficients of $t_1^{r_1}\cdots t_n^{r_n}$ of the 
two representations (\ref{0.6}) and (\ref{0.7})   we  obtain 
\begin{equation}
\mathbb{E}\left[X_1^{r_1}\cdots X_n^{r_n}\right]
=\frac{ r_1!\cdots r_n!}{2^{p_1+\cdots+p_n}} \prod_{\ell=1}^n\frac{\sigma_\ell^{2p_\ell}}{p_\ell!}\prod_{1\le i<j \le n}\frac{\sigma_{ij}^{p_{ij}}}{p_{ij}!} \,, 
\label{e.3.16} 
\end{equation} 
where $p_{\ell}, p_{ij}$, $1\le \ell\le n, 1\le i<j\le n$ satisfies the
relation $r_\ell=2p_\ell+|\vec{p}|_\ell$ for $\ell=1, \cdots, n$,  
which implies that 
$\sum_{\ell=1}^n r_\ell=2(\sum_{\ell=1}^n p_\ell+\sum_{1\le  i,j\le n}p_{ij})$ is an  even number. This also proves that  $\mathbb{E}\left[X_1^{r_1}\cdots X_n^{r_n}\right]=0$  when $\sum_{\ell=1}^n r_\ell$ is an odd integer.
This proves part (i) of the proposition. 
When $\sum_{\ell=1}^n r_\ell$ is even, by the fact that $\{p_1, \cdots, p_n\}$ are nonnegative integers and by the relationship
 $p_\ell=\frac{r_\ell-|\vec{p}|_\ell}{2}$ for $\ell=1, \cdots, n$, \, 
one see the summation in \eqref{e.3.16}  is over the set $\cC$ defined by  \eqref{e.3.13}.
\end{proof}

Now, we are in the position to prove Theorem \ref{theorem 0.1}.

\noindent {\it Proof of Theorem \ref{theorem 0.1}.}\quad 
We first prove the part (i) of this theorem.

For any $A \in \mathcal{X}$, the variance of $P(A)$ is given by \eqref{1.112}.
Using   Proposition \ref{prop.3.1},  we have 
\begin{align*}
	\Var(P(A))=H(A)\left(1-H(A)\right)\left(\frac{\mathbb{E}[v_1^2]}{2\mathbb{E}[v_1]}+o\left(\frac{\mathbb{E}[v_1^2]}{\mathbb{E}[v_1]}\right)\right).
\end{align*}
From the definition of $D_a(A)$ we have 
\begin{align*}
D_a(A)=\left[H(A)\left(1-H(A)\right)\left(\frac{\mathbb{E}[v_1^2]}{2\mathbb{E}[v_1]}+o\left(\frac{\mathbb{E}[v_1^2]}{\mathbb{E}[v_1]}\right)\right)\right]^{-\frac{1}{2}}\left(P(A)-H(A)\right).
\end{align*}
By   the Cram\'{e}r-Wold theorem
(e.g. \citet[Theorem 29.4]{Billingsley}), to show \eqref{e.3.19} 
it is sufficient to show
that  for any $ (t_1,\cdots,t_n) \in \mathbb{R}^n$
$$\sum_{i=1}^n t_iD_a(A_i) \overset{d}{\rightarrow} \sum_{i=1}^n t_iX_i,$$
where and throughout the remaining part of the paper  
 $(X_1, \cdots, X_n)$ are jointly Gaussian  with mean zero and covariance given by  \eqref{e.2.24a}. 
For any positive integer $ n$ and a nonnegative  integer sequence $\{r_i\}_{i=1}^n$, consider the joint moments  of $D_a(A_1),D_a(A_2),\cdots,D_a(A_n)$: 
\begin{align}
    &\mathbb{E}\left[\prod_{i=1}^n D_a^{r_i}(A_i)\right]\nonumber\\
    &=\prod_{i=1}^n\left[H(A_i)(1-H(A_i))\left(\frac{\mathbb{E}[v_1^2]}{2\mathbb{E}[v_1]}+o\left(\frac{\mathbb{E}[v_1^2]}{\mathbb{E}[v_1]}\right)\right)\right]^{-\frac{r_i}{2}}\mathbb{E}\left[\prod_{i=1}^n \left(P(A_i)-H(A_i)\right)^{r_i}\right]\nonumber\\
    &=\prod_{i=1}^n\left[H(A_i)(1-H(A_i))\left(\frac{\mathbb{E}[v_1^2]}{2\mathbb{E}[v_1]}+o\left(\frac{\mathbb{E}[v_1^2]}{\mathbb{E}[v_1]}\right)\right)\right]^{-\frac{r_i}{2}}\times \nonumber\\
    &\qquad \qquad \mathbb{E}\left[\prod_{i=1}^n \left(\sum_{j_i=1}^\infty w_{j_i}\left(\delta_{\theta_{j_i}}(A_i)-H(A_i)\right)\right)^{r_i}\right]\nonumber\\
    &=\sum \prod_{i=1}^n\left[H(A_i)(1-H(A_i))\left(\frac{\mathbb{E}[v_1^2]}{2\mathbb{E}[v_1]}+o\left(\frac{\mathbb{E}[v_1^2]}{\mathbb{E}[v_1]}\right)\right)\right]^{-\frac{r_i}{2}}
    \nonumber\\
    &\qquad\qquad C ( q;s_1,\cdots,s_n,  s_{1,1}, \cdots, s_{q,n}) 
    I ( q;s_1,\cdots,s_n,  s_{1,1}, \cdots, s_{q,n}) 
,\label{0.11}
\end{align}
where the   sum  is  over all  the  nonnegative integers $\{q;s_1,\cdots,s_n; s_{j,i}: j \in \{1,\cdots, q\}; i \in \{1,\cdots, n\}\}$ satisfying 
\begin{itemize}
    \item[(i)]\quad    $1\le q \le \sum_{i=1}^n r_i $; 
    \item[(ii)]\quad   $s_j=\sum_{i=1}^n s_{j,i}$ for all $j\in \{1,\cdots,q\}$
    \item[(iii)]\quad   $r_i=\sum_{j=1}^q s_{j,i}$ for all $i\in \{1,\cdots, n\}$
    \item[(iv)]\quad   $\sum_{j=1}^q s_j=\sum_{i=1}^n r_i$
    \item[(v)]\quad   $C ( q;s_1,\cdots,s_n,  s_{1,1}, \cdots, s_{q,n})$ are  some constants (that are found later on) depending on $q;s_1,\cdots,s_n,  s_{1,1}, \cdots, s_{q,n}$  
\end{itemize}
and 
\begin{eqnarray}
&&I ( q;s_1,\cdots,s_n,  s_{1,1}, \cdots, s_{q,n})\nonumber\\
&&\qquad\qquad := \mathbb{E}\Bigg[ \sum_{1\leq e_1<\cdots<e_q<\infty}\prod_{j=1}^q      
     w_{e_j}^{s_j}\left(\delta_{\theta_{e_j}}(A_1)-H(A_1)\right)^{s_{j,1}} \nonumber\\
     &&\qquad\qquad\qquad\qquad \cdots(\delta_{\theta_{e_j}}(A_n)-H(A_n))^{s_{j,n}}\Bigg] \,. \label{e.3.22} 
\end{eqnarray}
%
%
%
%
We will divide the discussion of  the limit as $a\rightarrow \infty$
of the terms in \eqref{0.11} into  three cases. 

\noindent \textbf{Case 1:}\quad 
\textit{There exists at least one $j\in\{1, \cdots, q\}$ such that  $s_j=1$ 
or     
there exists at least one pair $(j,k)$ such that  $s_{j,k} =1$.} 

From  the fact that 
\[
\mathbb{E}\left[\delta_{\theta_i}(A_i)-H(A_i)\right]=
\mathbb{E}\left[ \mathbbm{1}_{A_i}(\th_i)\right] -H(A_i) =\int _{A_i} dH-H(A_i)=0\,,
\]
we see that  in this case 
the corresponding terms in the sum 
of $I ( q;s_1,\cdots,s_n,  s_{1,1},\\
 \cdots, s_{q,n})$   are 
identically equal to 
$0$. 

\noindent \textbf{Case 2}\quad  
  \textit{  $\sum_{i=1}^n r_i$ is odd or $\sum_{i=1}^n r_i$ is even but  $\frac{\sum_{i=1}^n r_i}{2}>q$.} 

We substitute \eqref{e.3.22} 
into \eqref{0.11} and we consider the expectations of $v_1$'s. 
 By Proposition \ref{prop.3.1}, when  excluding the 
 terms discussed in  \textit{Case 1}  the remaining terms   corresponding to
 this case 
 have the following 
 asymptotics 
\begin{align}
{O}\left(\left[\frac{\mathbb{E}[v_1^2]}{\mathbb{E}[v_1]}\right]^{-\frac{\sum_{i=1}^n r_i}{2}}\frac{\prod_{j=1}^q \mathbb{E}[v_1^{s_j}]}{\left(\mathbb{E}[v_1]\right)^q}\right)={O}\left(\frac{\left(\mathbb{E}[v_1]\right)^{\frac{\sum_{j=1}^q s_j}{2}-q}\prod_{j=1}^q \mathbb{E}[v_1^{s_j}]}{\left(\mathbb{E}[v_1^2]\right)^{\frac{\sum_{j=1}^q s_j}{2}}}\right).\label{0.12}
\end{align}  
From the  assumption \eqref{0.10}, it follows that when 
$\sum_{i=1}^n r_i$ is even and when $q<\frac{\sum_{i=1}^n r_i}{2}$,  the expectation of the terms in the sum of $I ( q;s_1,\cdots,s_n,  s_{1,1}, \cdots, s_{q,n})$
 will converge to $0$ as $a \rightarrow \infty$.

Similarly, when  $\sum_{i=1}^n r_i$ is odd, 
since $q$ is an integer and $s_j \geq 2$ for all $j$, we always have $q<\frac{\sum_{i=1}^n r_i}{2}$. Therefore, the expectation of the corresponding terms 
satisfying the condition that   $\sum_{i=1}^n r_i$ is odd
in the sum of $I ( q;s_1,\cdots,s_n,  s_{1,1}, \cdots,\\
 s_{q,n})$ 
will always converge  to $0$ as $a \rightarrow \infty$.

\noindent \textbf{Case 3}\quad 
  \textit{   $\sum_{i=1}^n r_i$ is even  and   $\frac{\sum_{i=1}^n r_i}{2}= q$.}

 The only terms that may not converge to zero are the terms 
  that are not covered in      \textit{Case 1} and \textit{Case 2}.
This means that the only terms that have   nontrivial limits  
are  the terms 
satisfying the condition that
\[
\sum_{i=1}^n r_i \  \hbox{    is even and }\quad q=\frac{\sum_{i=1}^n r_i}{2}\,. 
\]
In this case, we must have 
\[
s_1=\cdots=s_q=2\quad{\rm and}\quad
s_{j,i} \in \{0,1,2\}
\]
for all  $j \in \{1,\cdots,q\}$ and for all 
$i \in \{1,\cdots,n\}$. And thus, for $\ell \in \{1, \cdots, \frac{\sum_{i=1}^n r_i}{2}\}$ the factors in $I ( q;s_1,
\cdots,s_n,  s_{1,1}, \cdots, s_{q,n})$ are either of the form  $w_{e_\ell}^2(\delta_{\theta_{e_\ell}}(A_i)-H(A_i))(\delta_{\theta_{e_\ell}}(A_j)-H(A_j))$ 
for $1\le i<j\le n$  (we call this factor the {\textit{ $(ij)$-mixed term}}, and if
there is no ambiguity to omit   the pre-index $(ij)$, we call this kind of factor the \textit{mixed term})   or of the form   $w_{e_\ell}^2(\delta_{\theta_{e_\ell}}(A_d)-H(A_d))^2$ (which we call  the \textit{ power two  term of $A_d$}) for $d \in \{1,\cdots,n\}$. The corresponding coefficients  $C( q;s_1,\cdots,s_n,  s_{1,1}, \cdots, s_{q,n})$   are computed   as follows.

For each $\ell \in \{1, \cdots, \frac{\sum_{i=1}^n r_i}{2}\}$, let 
\[
p_{ij}:=\#\left\{(i,j); 1\le i<j\le n\,, \     s_{\ell,i}=s_{\ell,j}=1     \right\}\,. 
\]
{
Namely,  for each  pair of $i,j$ such that $1\le i<j\le n$, $p_{ij}$ is the number of $(ij)$-mixed terms in the product  of $I ( q;s_1,\cdots,s_n,  s_{1,1}, \cdots, s_{q,n})$. 
Notice that, in order to obtain a $(ij)$-mixed term, we need to multiply the form $w_{e_{\ell}}(\delta_{\theta_{e_{\ell}}}(A_i)-H(A_i))$ (we call this form the {\textit{ power $1$ term of $A_i$}} and there are $r_i$ power $1$ terms of $A_i$) and the form $w_{e_{\ell}}(\delta_{\theta_{e_{\ell}}}(A_j)-H(A_j))$  (we call this form the {\textit{ power $1$ term of $A_j$}} and there are $r_j$ power $1$ terms of $A_i$). Moreover, 
 there are $p_{ij}!$ ways to get as many as $p_{ij}$ $(ij)$-mixed terms. Therefore, there are totally $\prod_{ 1\le i<j \le n}p_{ij}!$ mixed terms in $I ( q;s_1,\cdots,s_n,  s_{1,1}, \cdots, s_{q,n})$.

Now for each $d \in \{1,\cdots,n\}$, after picking 
up  $p_{dj}$ power $1$ terms of $A_d$, there will be $\frac{r_d-\sum_{j>d}p_{dj}-\sum_{i<d}p_{id}}{2}$ power $2$ terms of $A_d$ left for us to pick up.
Thus,  for each $d \in \{1,\cdots,n\}$, there are $r_d$ power $1$ terms of $A_d$, in which $\sum_{j>d}^np_{dj}+\sum_{i<d}p_{id}$ of them will be used to construct the mixed terms and  $r_d-\sum_{j>d}^np_{dj}-\sum_{i<d}p_{id}$ of them will be used to construct the power $2$ terms of $A_d$. 
They have  to satisfy the conditions  (denoted by $\mathcal{C}$)
\begin{align}
\nonumber
\begin{cases}
r_1-\sum_{j>1}p_{1j}\ \  \text{is even},\\
r_{\ell}-\sum_{j>\ell}p_{\ell j}-\sum_{i<\ell}p_{i\ell} \ \ \text{is even}, &\mbox{for } \ell \in \{2, \cdots, n-1\},\\
r_n-\sum_{i<n}p_{in} \ \ \text{is even}.
\end{cases}
\end{align}
Thus, for each $d \in \{1,\cdots,n\}$, we can construct the mixed terms and power $2$ terms of $A_d$ as follows.
Since there are $\sum_{j>d}^np_{dj}+\sum_{i<d}p_{id}$  mixed terms, we choose
\\ $(p_{1d},\cdots,p_{(d-1)d},p_{d(d+1)},\cdots,p_{dn})$  (some of them could be $0$) out of $r_d$ and then combine the 
remaining  $r_d-\sum_{j>d}^np_{dj}-\sum_{i<d}p_{id}$ power $1$ terms of $A_d$ as the power $2$ terms of $A_d$.
Thus, the number of terms  from    the above steps is 
$$\frac{{r_d \choose p_{1d},\cdots,p_{(d-1)d},p_{d(d+1)},\cdots,p_{dn}} {r_d-\sum_{j>d}^np_{dj}-\sum_{i<d}p_{id} \choose 2} {r_d-\sum_{j>d}^np_{dj}-\sum_{i<d}p_{id}-2 \choose 2}\cdots {2 \choose 2}}{\left(\frac{r_d-\sum_{j>d}^np_{dj}-\sum_{i<d}p_{id}}{2}\right)!}$$
 without ordering.  After the above steps, all the mixed terms and  power $2$ terms of $A_d$ can be ordered in $\frac{\sum_{l=1}^n r_l}{2}!$ ways.
Noticing that $p_{d_1d_2} \in \{0,1,\cdots,r_{d_1}\wedge r_{d_2}\}$, the coefficient  $C( q;s_1,\cdots,s_n,  s_{1,1}, \cdots, s_{q,n})$   is then 
\begin{align}\nonumber
    &C( q;s_1,\cdots,s_n,  s_{1,1}, \cdots, s_{q,n})\nonumber\\
    &=\sum_{\substack{p_{ij}=0,1\le i<j \le n  \\
    (i,j)\in \mathcal{C}}}^{r_i \wedge r_j}\ \prod_{d=1}^n \frac{{r_d \choose p_{1d},\cdots,p_{(d-1)d},p_{d(d+1)},\cdots,p_{dn}} {r_d-\sum_{j>d}^np_{dj}-\sum_{i<d}p_{id} \choose 2} \cdots {2 \choose 2}}{\left(\frac{r_d-\sum_{j>d}^np_{dj}-\sum_{i<d}p_{id}}{2}\right)!}\nonumber\\
    &\qquad\qquad \left(\prod_{1\le i<j \le n}p_{ij}!\right)\left(\frac{\sum_{\ell=1}^n r_\ell}{2}\right)!\nonumber\\
    &=\sum_{\substack{p_{ij}=0,1\le i<j \le n  \\
    (i,j)\in \mathcal{C}}}^{r_i \wedge r_j}\ \prod_{d=1}^n \frac{{r_d \choose p_{1d},\cdots,p_{(d-1)d},p_{d(d+1)},\cdots,p_{dn}} \left(r_d-\sum_{j>d}^np_{dj}-\sum_{i<d}p_{id}\right)!}{\left(\frac{r_d-\sum_{j>d}^np_{dj}-\sum_{i<d}p_{id}}{2}\right)!2^{\frac{r_d-\sum_{j>d}^np_{dj}-\sum_{i<d}p_{id}}{2}}}\nonumber\\
    &\qquad\qquad  \left(\prod_{1\le  i,j \le n}p_{ij}!\right)\left(\frac{\sum_{\ell=1}^n r_\ell}{2}!\right)\nonumber\\
    &=\sum_{\substack{p_{ij}=0,1\le i<j \le n  \\
    (i,j)\in \mathcal{C}}}^{r_i \wedge r_j} \ \frac{r_1! \cdots r_n!}{\left(\frac{r_1-\sum_{j>1}p_{1j}}{2}\right)! \cdots \left(\frac{r_m-\sum_{j>m}p_{mj}-\sum_{i<m}p_{im}}{2}\right)! \cdots \left(\frac{r_n-\sum_{i<n}p_{in}}{2}\right)!}\nonumber\\
    &\qquad\qquad  \frac{\left(\frac{\sum_{\ell=1}^n r_\ell}{2}\right)!}{2^{\frac{1}{2}\sum_{\ell=1}^nr_\ell-\sum_{1\le  i,j\le n}p_{ij}}\left(\prod_{1\le i<j\le n }p_{ij}!\right)}.\label{eq.3.21}
\end{align}
Consequently, $I ( q;s_1,\cdots,s_n,  s_{1,1}, \cdots, s_{q,n})$ can be rewritten by Proposition   \ref{prop.3.1} as
\begin{align}
&I ( q;s_1,\cdots,s_n,  s_{1,1}, \cdots, s_{q,n})=\nonumber\\
&\EE\left[\sum_{1\leq e_1<\cdots<e_{\frac{\sum_{i=1}^n r_i}{2}}<\infty}\prod_{j=1}^{\frac{\sum_{i=1}^n r_i}{2}} w_{e_j}^2\right]\prod_{d=1}^n\left(\mathbb{E}\left[\left(\delta_{\theta}(A_d)-H(A_d)\right)^2\right]\right)^{\frac{r_d-\sum_{j>d}p_{dj}-\sum_{i<d}p_{id}}{2}}\nonumber\\
      &\qquad \quad \prod_{1\le i<j\le n }\left(\mathbb{E}\left[\left(\delta_{\theta}(A_i)-H(A_i)\right)\left(\delta_{\theta}(A_j)-H(A_j)\right)\right]\right)^{p_{ij}}
 \nonumber\\      
      &\left(\frac{1}{2^{\frac{\sum_{i=1}^n r_i}{2}}\left(\frac{\sum_{i=1}^n r_i}{2}\right)!}\left(\frac{\mathbb{E}[v_1^2]}{\mathbb{E}[v_1]}\right)^{\frac{\sum_{i=1}^n r_i}{2}}+o\left(\left(\frac{\mathbb{E}[v_1^2]}{\mathbb{E}[v_1]}\right)^{\frac{\sum_{i=1}^n r_i}{2}}\right)\right)\nonumber\\
 &\prod_{d=1}^n \left(H(A_d)(1-H(A_d))\right)^{\frac{r_d-\sum_{j>d}p_{dj}-\sum_{i<d}p_{id}}{2}}\prod_{1\le i<j \le n
 }(-H(A_i)H(A_j))^{p_{ij}}\,. \label{eq.3.23}
\end{align}
From  \eqref{0.11} and  \eqref{eq.3.21}, \eqref{eq.3.23}, we can 
compute  $\mathbb{E}\left[\prod_{i=1}^n D_a^{r_i}(A_i)\right]$ as follows.
\begin{align}\nonumber
	&\mathbb{E}[\prod_{i=1}^n D_a^{r_i}(A_i)]=\prod_{i=1}^n\left[H(A_i)(1-H(A_i))\left(\frac{\mathbb{E}[v_1^2]}{2\mathbb{E}[v_1]}+o\left(\frac{\mathbb{E}[v_1^2]}{\mathbb{E}[v_1]}\right)\right)\right]^{-\frac{r_i}{2}}\\\nonumber
    &\left(\frac{1}{2^{\frac{\sum_{i=1}^n r_i}{2}}\left(\frac{\sum_{i=1}^n r_i}{2}\right)!}\left(\frac{\mathbb{E}[v_1^2]}{\mathbb{E}[v_1]}\right)^{\frac{\sum_{i=1}^n r_i}{2}}+o\left(\left(\frac{\mathbb{E}[v_1^2]}{\mathbb{E}[v_1]}\right)^{\frac{\sum_{i=1}^n r_i}{2}}\right)\right)\nonumber\\
    &\sum_{{p_{ij}=0, 1\le i<j\le n\atop (i,j)\in \cC}}^{r_i \wedge r_j}\frac{r_1! \cdots r_n!}{\left(\frac{r_1-\sum_{j>1}p_{1j}}{2}\right)! \cdots \left(\frac{r_m-\sum_{j>m}p_{mj}-\sum_{i<m}p_{im}}{2}\right)! \cdots \left(\frac{r_n-\sum_{i<n}p_{in}}{2}\right)!}\nonumber\\
    &\frac{\left(\frac{\sum_{i=1}^n r_i}{2}\right)!}{2^{\frac{1}{2}\sum_{\ell=1}^nr_\ell-\sum_{1\le i<j\le n }p_{ij}}\left(\prod_{1\le i<j\le n }p_{ij}!\right)} \nonumber\\
    &\prod_{d=1}^n \left(H(A_d)(1-H(A_d))\right)^{\frac{r_d-\sum_{j>d}p_{dj}-\sum_{i<d}p_{id}}{2}}\prod_{1\le i<j\le n }(-H(A_i)H(A_j))^{p_{ij}}\nonumber\\
    &=\sum_{{p_{ij}=0, 1\le i<j\le n\atop (i,j)\in \cC}}^{r_i \wedge r_j}\frac{r_1! \cdots r_n!}{\left(\frac{r_1-\sum_{j>1}p_{1j}}{2}\right)! \cdots \left(\frac{r_m-\sum_{j>m}p_{mj}-\sum_{i<m}p_{im}}{2}\right)! \cdots \left(\frac{r_n-\sum_{i<n}p_{in}}{2}\right)!}\nonumber\\
    &\frac{1}{2^{\frac{1}{2}\sum_{\ell=1}^nr_\ell-\sum_{1\le i<j\le n }p_{ij}}\left(\prod_{1\le i<j\le n }p_{ij}!\right)} \nonumber\\
    &\prod_{1\le i<j\le n }\left(-\sqrt{\frac{H(A_i)H(A_j)}{(1-H(A_i))(1-H(A_j))}}\right)^{p_{ij}}+o(1)\nonumber\\
    &\overset{a \rightarrow \infty}{\rightarrow} \sum_{{p_{ij}=0, 1\le i<j\le n\atop (i,j)\in \cC}}^{r_i \wedge r_j}\frac{r_1! \cdots r_n!}{\left(\frac{r_1-\sum_{j>1}p_{1j}}{2}\right)! \cdots \left(\frac{r_m-\sum_{j>m}p_{mj}-\sum_{i<m}p_{im}}{2}\right)! \cdots \left(\frac{r_n-\sum_{i<n}p_{in}}{2}\right)!}\nonumber\\
    &\frac{1}{2^{\frac{1}{2}\sum_{\ell=1}^nr_\ell-\sum_{1\le i<j\le n }p_{ij}}\left(\prod_{1\le i<j\le n }p_{ij}!\right)} \nonumber\\
    &\prod_{1\le i<j\le n }\left(-\sqrt{\frac{H(A_i)H(A_j)}{(1-H(A_i))(1-H(A_j))}}\right)^{p_{ij}},\label{0.15}
\end{align}
which is equal to $\mathbb{E}\left[\prod_{i=1}^n X_i^{r_i}\right]$,
where $X_1, \cdots, X_n$ are the multi-normal distribution 
defined in Proposition \ref{prop.6.4}.  
Now,   by multinomial expansion   we see  that for any positive 
 integer $k$ and   
for any $  (t_1,\cdots,t_n) \in \RR^d$,
\begin{eqnarray*}
\EE\left[\sum_{i=1}^n t_iD_a(A_i)  \right]^k 
&=&\sum_{k_1+\cdots+k_n=k}\left({k\atop k_1!\cdots k_n!}\right)
\mathbb{E}\left[\prod_{i=1}^n t_i^{k_i}D_a^{k_i}(A_i)\right] \\
&\overset{a \rightarrow \infty} {\rightarrow} &
\sum_{k_1+\cdots+k_n=k}\left({k\atop k_1!\cdots k_n!}\right)
\mathbb{E}\left[\prod_{i=1}^n t_i^{k_i}X_i^{k_i} \right] \\
& =&  
\EE\left[\sum_{i=1}^n t_iX_i  \right]^k \,. 
\end{eqnarray*}
By the method of moments (see e.g. 
  \citet[Theorem 30.2]{Billingsley}) it follows that  
$$\sum_{i=1}^n t_iD_a(A_i) \overset{d}{\rightarrow} \sum_{i=1}^n t_iX_i
\quad \hbox{as $a\rightarrow \infty$}.$$
}
Part (i) of Theorem \ref{theorem 0.1} follows 
then from the Cram\'er-Wold theorem
(e.g. \citet[Theorem 29.4]{Billingsley}).
 
Now,  we   prove the part (ii) of this theorem 
  by proving the weak convergence of finite dimensional distributions 
  and by verifying  a tightness condition. The finite dimensional weak convergence of $Q_{H,a}$ can be shown directly by part (i), i.e. for any finite measurable sets $A_1, \cdots, A_n$ in $\mathcal{X}^d$, we have
\begin{align*}
(Q_{H,a}(A_1), \cdots, Q_{H,a}(A_n)) \stackrel{d}{\rightarrow} (B_H^o(A_1), \cdots, B_H^o(A_n)).
\end{align*}
By Theorem 2 of \citet{bickel1971}, to show \eqref{e.2.24}  we only need to check the tightness condition, i.e, inequality (2) of \citet{bickel1971},  with  $\gamma_1=\gamma_2=2$, $\beta_1=\beta_2=1$ and $\mu=2H$.
Obviously,  $\mu$ is finite and nonatomic. For every pair of   Borel sets 
 $A$ and $B$ in $\mathcal{B}(\RR^d)$,  
  by the proof of part (i) of  this theorem  we   have  
\begin{align*}
&\mathbb{E}[|Q_{H,a}(A)|^2|Q_{H,a}(B)|^2]\\
&\qquad\qquad =[H(A)(1-H(A))][H(B)(1-H(B))]\mathbb{E}[D_a^2(A)D_a^2(B)]\\
&\qquad\qquad =[H(A)(1-H(A))][H(B)(1-H(B))]\\
&\qquad\qquad \times \left(1+2\frac{H(A)H(B)}{(1-H(A))(1-H(B))}+o(1)\right)\\
&\qquad\qquad  =3H(A)^2H(B)^2-H(A)^2H(B)-H(A)H(B)^2+H(A)H(B)+o(1)\\
&\qquad\qquad  \leq \mu(A)\mu(B).\ 
\end{align*}
The last inequality is due to the fact that $H(\cdot)\in (0,1)$ and thus $H(\cdot)^2 \leq H(\cdot)$. Therefore, the tightness condition    on $D(\RR ^d)$
is verified.
\fin 
\subsection{Proof of Theorem \ref{theorem 2.17}}
\begin{proof} Once we have Proposition \ref{prop.3.2}-\ref{prop.3.4}, 
the proofs of part (i) of this theorem for the various processes  
except the generalized Dirichlet process 
follow from a similar argument to that in  the proof of part (i) of Theorem \ref{theorem 0.1}. So, we shall omit the details. 

When $P \sim \GDP(a,r,H)$,   We need   the following result about  the variance of $P$  
from \citet{lijoi2005a}:  
\begin{align*}
\Var\left[P(A)\right]=H(A)\left(1-H(A)\right)\mathcal{I}_{a,r}\,, 
\end{align*}
where $\mathcal{I}_{a,r}$  is given by 
\begin{align}
\mathcal{I}_{a,r}&=a(r!)^a\sum_{k=1}^r\int_0^{\infty} \frac{x}{(k+x)^2\prod_{j=1}^r(j+x)^a}dx\nonumber\\
&=\frac{a(r!)^a\Gamma(ra)}{r^{ra}\Gamma(ra+2)}\sum_{j=1}^rF_D^{(r-1)}\left(ra,{\bf{a}_k^*};ra+2;\frac{1}{r}\bf{J}_{r-1}\right)\,. \label{7.2}
\end{align}
 Here  ${\bf a}_{k}^*=(a,\cdots,a+2,\cdots,a)^T$ is a $r-1$ dimensional vector where  the $k$-th element is   $a+2$  and all other elements are equal to $a$;   ${\bf J}_{r-1}=(1,\cdots,r-1)^T$;
and $F_D^{(r-1)}$ is the fourth Lauricella multiple hypergeometric function (see e.g. \citet{exton1976} for more details).  

Letting $x=\frac{t}{a}$  we have 
\begin{align}
 \mathcal{I}_{a,r}
 &=a\sum_{k=1}^r\frac{1}{k^2}\int_0^{\infty} \frac{x}{\left(1+\frac{x}{k}\right)^2\prod_{j=1}^r\left(1+\frac{x}{j}\right)^a}dx\nonumber\\
&=a\sum_{k=1}^r\frac{1}{ak^2}\int_0^{\infty} \frac{t}{a\left(1+\frac{t}{ak}\right)^2\prod_{j=1}^r\left(1+\frac{t}{aj}\right)^a}dt\,. \nonumber 
\end{align}
When $a$ is large, we can approximate $\mathcal{I}_{a,r}$ by 
\begin{align}
 \mathcal{I}_{a,r} 
&=\frac{1}{a}\sum_{k=1}^r\frac{1}{k^2}\int_0^{\infty} te^{-\left(\sum_{j=1}^r \frac{1}{j}\right)t}dt+o\left(\frac{1}{a}\right)\nonumber\\
&= \frac{\sum_{k=1}^r\left(\frac{1}{k}\right)^2}{\left(\sum_{j=1}^r\frac{1}{j}\right)^2a}+o\left(\frac{1}{a}\right).\label{7.3}
\end{align}
%
Denote 
 \begin{equation}c=\frac{\left(\sum_{j=1}^r\frac{1}{j}\right)^2}{\sum_{k=1}^r\left(\frac{1}{k}\right)^2}  \,. 
 \end{equation} 
Then 
 \begin{equation} 
 D_a(\cdot)=\sqrt{\frac{ca}{H(\cdot)(1-H(\cdot))}}\left(P(\cdot)-H(\cdot)\right)\,. 
 \label{e.6.5} 
  \end{equation}
We shall  prove the result for $n=3$ and the general $n$ case can be handled in a similar way. 

By the integral representation of the confluent form of the fourth Lauricella hypergeometric function (see  e.g. formula (1.4.3.9) in \citet{exton1976}), the joint probability density  of $P(A_1),P(A_2)$ admits  the  following form: 
\begin{align}
\rho (x_1,x_2)
 =&\frac{\Gamma(ra)}{\Gamma(raH_1)\Gamma(raH_2)\Gamma(raH_3)} \frac{(r!)}{r^{ra}\Gamma(ra)} \frac{\Gamma(raH_1)\Gamma(raH_2)\Gamma(raH_3)}{[\Gamma(aH_1)\Gamma(aH_2)\Gamma(aH_3)]^r} 
  \nonumber\\
& \times x_1^{2aH_1-1}x_2^{2aH_2-1}(1-x_1-x_2)^{2aH_3-1}  \nonumber\\
 &\times \int_0^{\infty}\int_{0\leq u_1^{(1)}+\cdots+u_{r-1}^{(1)}\leq 1}\int_{0\leq u_1^{(2)}+\cdots+u_{r-1}^{(2)}\leq 1}\int_{0\leq u_1^{(3)}+\cdots+u_{r-1}^{(3)}\leq 1} \xi^{ra-1}\nonumber\\
&\times  \exp \bigg\{-\xi+\xi\left(\frac{\sum_{k=1}^{r-1}ku_k^{(1)}x_1+\sum_{k=1}^{r-1}ku_k^{(2)}x_2+\sum_{k=1}^{r-1}ku_k^{(3)}(1-x_1-x_2)}{r}\right)\bigg\}\nonumber\\
&\times  \prod_{i=1}^3 \left[u_1^{i}\cdots u_{r-1}^{(i)}\left(1-u_1^{i}-\cdots -u_{r-1}^{(i)}\right)\right]^{aH_i-1}du_1^{(1)}\cdots du_{r-1}^{(1)} \nonumber\\
& \qquad  du_1^{(2)}\cdots du_{r-1}^{(2)}du_1^{(3)}\cdots du_{r-1}^{(3)}d\xi,\label{7.4}
\end{align}
where $H_i=H(A_i)$ for $i=1,2,3$. 
Using the expression of \eqref{e.6.5} and the above density form 
\eqref{7.4}, we can obtain the probability density function of $D(A_1),D(A_2)$ as follows:
\begin{align}
 f(y_1,y_2)=& \mathcal{J}_1 \times f_1(y_1, y_2) \,, 
\nonumber\\
\label{7.5}
\end{align}
where 
\begin{align}
&\mathcal{J}_1=\frac{\sqrt{H_1(1-H_1)H_2(1-H_2)}\Gamma(ra)}{ca\Gamma(raH_1)\Gamma(raH_2)\Gamma(raH_3)}\left(\sqrt{\frac{H_1(1-H_1)}{ca}}y_1+H_1\right)^{raH_1-1}\nonumber\\
&\times  \left(\sqrt{\frac{H_2(1-H_2)}{ca}}y_2+H_2\right)^{raH_2-1}\left(H_3-\frac{\sqrt{H_1(1-H_1)}y_1+\sqrt{H_2(1-H_2)}y_2}{\sqrt{ca}}\right)^{raH_3-1}  
\end{align}
and 
\begin{align} 
f_1(y_1, y_2) =&\int_0^{\infty}\idotsint_{0\leq u_1^{(1)}+\cdots+u_{r-1}^{(1)}\leq 1}\idotsint_{0\leq u_1^{(2)}+\cdots+u_{r-1}^{(2)}\leq 1}\nonumber\\
&\idotsint_{0\leq u_1^{(3)}+\cdots+u_{r-1}^{(3)}\leq 1}\prod_{i=1}^3 \left[u_1^{i}\cdots u_{r-1}^{(i)}(1-u_1^{i}-\cdots -u_{r-1}^{(i)})\right]^{aH_i-1}\xi^{ra-1}\nonumber\\
&\times  \exp \Bigg\{-\xi+\frac{\xi}{r}\bigg[\sum_{k=1}^{r-1}ku_k^{(1)}\left(\frac{\sqrt{H_1(1-H_1)}y_1}{\sqrt{ca}}+H_1\right)\nonumber\\
&+\sum_{k=1}^{r-1}ku_k^{(2)}\left(\frac{\sqrt{H_2(1-H_2)}y_2}{\sqrt{ca}}+H_2\right)\nonumber\\
&+\sum_{k=1}^{r-1}ku_k^{(3)}\left(H_3-\frac{\sqrt{H_1(1-H_1)}y_1+\sqrt{H_2(1-H_2)}y_2}{\sqrt{ca}}\right)\bigg] \Bigg\}\nonumber\\
&du_1^{(1)}\cdots du_{r-1}^{(1)} du_1^{(2)}\cdots du_{r-1}^{(2)}du_1^{(3)}\cdots du_{r-1}^{(3)}d\xi\,. \label{e.6.9}
\end{align}
To determine if the  density of $f(y_1,y_2)$ has a limit 
or not and if yes, to find the limiting  density, we shall find the limits of $
\mathcal{J}_1$ and $f_1(y_1, y_2)$ separately.

\medskip 
\noindent \textbf{Step 1:  Limit of $\mathcal{J}_1$.} 

By Stirling's formula $\Gamma(z)=\sqrt{2\pi}z^{z-\frac{1}{2}}e^{-z}\left(1+o\left(\frac{1}{z}\right)\right)$ when $z$ is large,  we have the following approximation: 
\begin{align}
 \mathcal{J}_1
 =&\frac{ra}{ca2\pi}\sqrt{\frac{(1-H_1)(1-H_2)}{H_3}}\left(\sqrt{\frac{(1-H_1)}{caH_1}}y_1+1\right)^{raH_1-1}\left(\sqrt{\frac{(1-H_2)}{caH_2}}y_2+1\right)^{raH_2-1}\nonumber\\
&\times \left(1-\frac{\sqrt{H_1(1-H_1)}y_1+\sqrt{H_2(1-H_2)}y_2}{H_3\sqrt{ca}}\right)^{raH_3-1}\left(1+o\left(\frac{1}{a}\right)\right),\nonumber\\
 =&\frac{r}{c2\pi}\sqrt{\frac{(1-H_1)(1-H_2)}{H_3}}\left(1+o\left(\frac{1}{a}\right)\right)\exp \bigg \{ ra \bigg[H_1\log\left(\sqrt{\frac{(1-H_1)}{caH_1}}y_1+1\right)\nonumber\\
&+H_2 \log \left(\sqrt{\frac{(1-H_2)}{caH_2}}y_2+1\right)\nonumber\\
&+H_3 \log \left(1-\frac{\sqrt{H_1(1-H_1)}y_1+\sqrt{H_2(1-H_2)}y_2}{H_3\sqrt{ca}}\right) \bigg] \bigg\}\,. \nonumber
\end{align}
An application of  $\log(1+\frac{1}{\sqrt{z}})=\frac{1}{\sqrt{z}}-\frac{1}{2z}+  o\left(\frac{1}{z}\right) $ for large $z$ yields
\begin{align}
 \mathcal{J}_1=&\frac{r}{c}\frac{\sqrt{(1-H_1)(1-H_2)}}{2\pi \sqrt{H_3}}\nonumber\\
&\times \exp \bigg\{ -\frac{r}{c}\frac{y_1^2+y_2^2+2y_1y_2\sqrt{\frac{H_1H_2}{(1-H_1)(1-H_2)}}}{2\frac{H_3}{(1-H_1)(1-H_2)}}\bigg\}\left(1+o\left(\frac{1}{a}\right)\right).\label{7.6}
\end{align}

\medskip
\noindent\textbf{Step 2: The limit of $f_1(y_1, y_2)$.    This is much more complicated. We first obtain the leading terms of $f_1$ as $a\rightarrow \infty$}.

 To make the presentation  clear, 
 denote the integrating variables by 
  $${\bf{z}}=\left(u_1^{(1)},\cdots, u_{r-1}^{(1)}, u_1^{(2)}, \cdots, u_{r-1}^{(2)}, u_1^{(3)}, \cdots, u_{r-1}^{(3)}, \xi\right)^T$$ 
  and denote 
\begin{align}
&g\left(u_1^{(1)},\cdots, u_{r-1}^{(1)}, u_1^{(2)}, \cdots, u_{r-1}^{(2)}, u_1^{(3)}, \cdots, du_{r-1}^{(3)}, \xi\right) \nonumber\\
&\qquad    := \sum_{i=1}^3 (aH_i-1)\log \left[u_1^{(i)}\cdots u_{r-1}^{(i)}(1-u_1^{(i)}-\cdots -u_{r-1}^{(i)})\right]+(ra-1)\log (\xi)\nonumber\\
& \qquad   -\xi+\frac{\xi}{r}\bigg[\sum_{k=1}^{r-1}ku_k^{(1)}\left(\frac{\sqrt{H_1(1-H_1)}y_1}{\sqrt{ca}}+H_1\right)+\sum_{k=1}^{r-1}ku_k^{(2)}\left(\frac{\sqrt{H_2(1-H_2)}y_2}{\sqrt{ca}}+H_2\right)\nonumber\\
&\qquad \qquad   +\sum_{k=1}^{r-1}ku_k^{(3)}\left(H_3-\frac{\sqrt{H_1(1-H_1)}y_1+\sqrt{H_2(1-H_2)}y_2}{\sqrt{ca}}\right)\bigg]\,. \label{7.7}
\end{align}
This function $g$  attains  its maximum at its critical point 
\begin{equation}
{\bf{z_0}}=\left(u_{1,0}^{(1)},\cdots, u_{r-1,0}^{(1)}, u_{1,0}^{(2)}, \cdots, u_{r-1,0}^{(2)}, u_{1,0}^{(3)}, \cdots, u_{r-1,0}^{(3)}, \xi_0\right)^T\,, 
\end{equation} 
where 
\begin{empheq}[left=\empheqlbrace]{align*} 
&u_{k,0}^{(i)}=\frac{1}{(r-k)\left(1+\cdots+\frac{1}{r}\right)}\,,
\quad k=1,\cdots, r-1\,, \ i=1,2,3 \,;\\
& \xi_0=(ra-1)\left(1+\cdots+\frac{1}{r}\right)\,.
\end{empheq} 
By an elementary  calculation  we   have
\[ 
g'({\bf z_0})=\begin{pmatrix}
\left[(1+\cdots+\frac{1}{r})\left(1-\frac{H_1}{r}\right)+(1+\cdots+\frac{1}{r})\left(a-\frac{1}{r}\right)\frac{\sqrt{H_1(1-H_1)}y_1}{\sqrt{ca}}\right]{\bf J}_{r-1}\\ \\
\left[(1+\cdots+\frac{1}{r})\left(1-\frac{H_2}{r}\right)+(1+\cdots+\frac{1}{r})\left(a-\frac{1}{r}\right)\frac{\sqrt{H_2(1-H_2)}y_2}{\sqrt{ca}}\right]{\bf J}_{r-1}\\ \\
\left[(1+\cdots+\frac{1}{r})\left(1-\frac{H_1}{r}\right)-(1+\cdots+\frac{1}{r})\left(a-\frac{1}{r}\right)\frac{\sqrt{H_1(1-H_1)}y_1+\sqrt{H_2(1-H_2)}y_2}{\sqrt{ca}}\right]{\bf J}_{r-1}\\
0
\end{pmatrix}\,,
\]
where ${\bf J}_{r-1}=(1,\cdots,r-1)^T$  and 
\begin{align*}
g''({\bf{z_0}})=\left(\begin{array}{cccc}
M_1&  &  & N_1\\
 & M_2&&N_2\\
 &  & M_3&N_3\\
N_1^T&N_2^T&N_3^T&-\frac{1}{(ra-1)\left(\sum_{k=1}^{r-1}\frac{1}{k}\right)^2}
\end{array}\right)\,, 
\end{align*}
where  the empty entries should be  filled with a  $(r-1)\times (r-1)$  
dimensional zero    matrix and where
for   $i=1,2,3$, $M_i$ is a $(r-1)\times (r-1)$ matrix given by  
\[
M_i=-(aH_i-1)\left(\sum_{k=1}^{r-1}\frac{1}{k}\right)^2\left(\begin{array}{cccc}
\left[(r-1)^2+r^2\right] & r^2&\cdots& r^2\\
 r^2 & \left[(r-2)^2+r^2\right]&\cdots&r^2\\
 \vdots&\vdots&\ddots&\vdots\\
 r^2&r^2&\cdots& \left[(1)^2+r^2\right]
\end{array}\right)\,, 
\]
and $ N_i$ is a $(r-1)$ column vector given by 
\[
N_i=\begin{pmatrix}
\frac{1\left(\frac{\sqrt{H_i(1-H_i)}y_i}{\sqrt{ca}}+H_i\right)}{r}\\
\frac{2\left(\frac{\sqrt{H_i(1-H_i)}y_i}{\sqrt{ca}}+H_i\right)}{r}\\
\vdots\\
\frac{(r-1)\left(\frac{\sqrt{H_i(1-H_i)}y_i}{\sqrt{ca}}+H_i\right)}{r} 
\end{pmatrix}\,. 
\]
With these notations    we  have   when $a$ is large 
\begin{align}
 f_1(y_1, y_2)=&\frac{(r!)}{r^{ra}\Gamma(ra)}\frac{\Gamma(raH_1)\Gamma(raH_2)\Gamma(raH_3)}{\left[\Gamma(aH_1)\Gamma(aH_2)\Gamma(aH_3)\right]^r}\int_0^{\infty}\idotsint_{0\leq u_1^{(1)}+\cdots+u_{r-1}^{(1)}\leq 1}\nonumber\\
&\idotsint_{0\leq u_1^{(2)}+\cdots+u_{r-1}^{(2)}\leq 1}\idotsint_{0\leq u_1^{(3)}+\cdots+u_{r-1}^{(3)}\leq 1}e^{g\left(\bf{z}\right)}d\bf{z}\nonumber\\
 =&\frac{(r!)(aH_1)^{\frac{r-1}{2}}(aH_2)^{\frac{r-1}{2}}(aH_3)^{\frac{r-1}{2}}}{r^{\frac{3}{2}}(\sqrt{2\pi})^{3r-2}e^{-ra}(ra)^{ra-\frac{1}{2}}}  \int_0^{\infty}\idotsint_{0\leq u_1^{(1)}+\cdots+u_{r-1}^{(1)}\leq 1}\nonumber\\
&\idotsint_{0\leq u_1^{(2)}+\cdots+u_{r-1}^{(2)}\leq 1} \idotsint_{0\leq u_1^{(3)}+\cdots+u_{r-1}^{(3)}\leq 1}\exp\Big\{g({\bf{z_0}})\nonumber\\
& + g'({\bf{z_0}})^T({\bf{z}}-{\bf{z_0}})+\frac{1}{2}({\bf{z}}-{\bf{z_0}})^Tg''({\bf{z_0}})({\bf{z}}-{\bf{z_0}})\Big\} \left(1+o\left(\frac{1}{a}\right)\right)d\bf{z}\,. \label{7.8}
\end{align}

\noindent\textbf{Step 3: Evaluation of the leading term of $f_1(y_1, y_2)$}.

In order to evaluate 
 the integral of  \eqref{7.8}, we use the change of variables 
\begin{align*}
&u_k^{(i)}-u_{k,0}^{(i)}=\frac{t_k^{(i)}}{\sqrt{aH_i}\sqrt{(r-k)^2+r^2}\left(\sum_{k=1}^{r-1}\frac{1}{k}\right)}, 
\end{align*}
for $k=1,\cdots,r-1$, $i=1,2,3$, and  
\begin{align*} 
&\xi-\xi_0=\sqrt{ra-1}\left(\sum_{k=1}^{r-1}\frac{1}{k}\right)s\,. 
\end{align*}
  Thus, we have 
\begin{align}
 f_1(y_1, y_2)=&\frac{(r!)(aH_1)^{\frac{r-1}{2}}(aH_2)^{\frac{r-1}{2}}(aH_3)^{\frac{r-1}{2}}}{r^{\frac{3}{2}}(\sqrt{2\pi})^{3r-2}e^{-ra}(ra)^{ra-\frac{1}{2}}} \sqrt{ra-1}\left(\sum_{k=1}^{r}\frac{1}{k}\right)\nonumber\\
&\times \prod_{i=1}^3\prod_{k=1}^{r-1}\frac{1}{\sqrt{aH_i}\sqrt{(r-k)^2+r^2}\left(\sum_{k=1}^{r}\frac{1}{k}\right)}\nonumber \\
&\times  \int_{-\infty}^{\infty}\cdots\int_{-\infty}^{\infty}\exp\Big\{g({\bf{z_0}}) + {\bf{b}}^T{\bf{t}}-\frac{1}{2}{\bf{t}}^TA{\bf{t}}\Big\} \left(1+o\left(\frac{1}{a}\right)\right)d\bf{t} \nonumber \\
 =&\frac{(r!)^3\left(\sum_{k=1}^{r}\frac{1}{k}\right)^3}{r^{\frac{3}{2}}\prod_{k=1}^{r-1}\left(\sqrt{(r-k)^2+r^2}\right)^3}\left(\det A\right)^{-\frac{1}{2}}\nonumber \\
&\qquad\qquad \times  \exp\left\{\frac{1}{2}{\bf{b}}^TA^{-1}{\bf{b}}\right\}\left(1+o\left(\frac{1}{a}\right)\right),\label{7.10}
\end{align}
where ${\bf{t}}=\left(t_1^{(1)},\cdots,t_{r-1}^{(2)},t_1^{(2)},\cdots,t_{r-1}^{(2)},t_1^{(3)},\cdots,t_{r-1}^{(3)},s\right)^T$, 
and  where a  direct calculation from $g'({\bf z_0})$ gives 
\begin{equation} 
{\bf{b}}=\begin{pmatrix}
-\frac{\sqrt{r}\sqrt{1-H_1}y_1}{\sqrt{c}}B\\
-\frac{\sqrt{r}\sqrt{1-H_2}y_2}{\sqrt{c}}B\\
-\frac{\sqrt{r}\left(\sqrt{H_1(1-H_1)}y_1+\sqrt{H_2(1-H_2)}y_2\right)}{\sqrt{c}\sqrt{H_3}}B\\
0
\end{pmatrix}\,. \label{e.7.15a} 
 \end{equation}
The matrix  $A$ in \eqref{7.10} can be  found directly from $g''({\bf z_0})$   and will  be
given below   when we study it. The above last identity 
\eqref{7.10}  follows from the fact that  
\[
e^{g({\bf{z_0}})}=\frac{\left[(ra-1)\left(\sum_{k=1}^{r}\frac{1}{k}\right)\right]^{ra-1}}{\left[r!\left(\sum_{k=1}^{r}\frac{1}{k}\right)^r\right]^{a-3}}e^{-(ra-1)}
\]
 and the multivariate Gaussian integral formula. 
   By a simple algebra  we can write 
\begin{equation}
A=\left(\begin{array}{cccc}
A_0 &  &  & B_1\\
 &  A_0&&B_2\\
 &  & A_0&B_3\\
B_1^T&B_2^T&B_3^T&1
\end{array}\right)\,,  \label{e.6.15} 
\end{equation} 
where 
$A_0$ is a $(r-1) \times (r-1)$ matrix whose entries are  
\[
[A_0]_{ij}=\begin{cases}
1 &\qquad  \mbox{if } i =j\\
\frac{r^2}{\sqrt{(r-i)^2+r^2}\sqrt{(r-j)^2+r^2}} & \qquad \mbox{if } i \neq j\,. 
\end{cases}
\]
and   for $i=1,2,3$ 
\[
B_i=-\begin{pmatrix}
\frac{1\sqrt{ra-1}\left(\frac{\sqrt{H_i(1-H_i)}y_i}{\sqrt{ca}}+H_i\right)}{r\sqrt{aH_i}\sqrt{(r-1)^2+r^2}}\\
\frac{2\sqrt{ra-1}\left(\frac{\sqrt{H_i(1-H_i)}y_i}{\sqrt{ca}}+H_i\right)}{r\sqrt{aH_i}\sqrt{(r-2)^2+r^2}}\\
\vdots\\
\frac{(r-1)\sqrt{ra-1}\left(\frac{\sqrt{H_i(1-H_i)}y_i}{\sqrt{ca}}+H_i\right)}{r\sqrt{aH_i}\sqrt{(1)^2+r^2}}
\end{pmatrix}= -\begin{pmatrix}
\frac{1\sqrt{H_1}}{\sqrt{r}\sqrt{(r-1)^2+r^2}}\left(1+o\left(\frac{1}{\sqrt{a}}\right)\right)\\
\frac{2\sqrt{H_1}}{\sqrt{r}\sqrt{(r-2)^2+r^2}}\left(1+o\left(\frac{1}{\sqrt{a}}\right)\right)\\
\vdots\\
\frac{(r-1)\sqrt{H_1}}{\sqrt{r}\sqrt{(1)^2+r^2}}\left(1+o\left(\frac{1}{\sqrt{a}}\right)\right)
\end{pmatrix}\,. 
\]

\noindent\textbf{Step 4:    The inverse and the 
determinant of $A$} 

We need to  find $A ^{-1}$.  First we find 
$A_0^{-1}$.  From the expression of $A_0$  we can write 
\[
A_0=D+{\bf{v}}{\bf{v}}^T=D^{\frac{1}{2}}\left(I+D^{-\frac{1}{2}}{\bf{v}}{\bf{v}}^TD^{-\frac{1}{2}}\right)D^{\frac{1}{2}}\,, 
\]
where  
\[
{\bf{v}}=\begin{pmatrix}
\frac{r}{\sqrt{(r-1)^2+r^2}}\\
\frac{r}{\sqrt{(r-2)^2+r^2}}\\
\vdots\\
\frac{r}{\sqrt{(1)^2+r^2}}
\end{pmatrix}\]
  and 
\[
D=\left(\begin{array}{cccc}
\frac{(r-1)^2}{(r-1)^2+r^2} & 0&\cdots& 0\\
0 & \frac{(r-2)^2}{(r-2)^2+r^2}&\cdots&0\\
 \vdots&\vdots&\ddots&\vdots\\
 0&0&\cdots &\frac{1^2}{1^2+r^2}
\end{array}\right)\,.
\]
 Hence, the determinant of $A_0$ is given by 
\begin{align*}
\det (A_0)&=\det\left(D^{\frac{1}{2}}\right)\det\left(I+D^{-\frac{1}{2}}{\bf{v}}{\bf{v}}^TD^{-\frac{1}{2}}\right)\det\left(D^{\frac{1}{2}}\right)\\
&=\det\left(D^{\frac{1}{2}}\right)\left(1+\left(D^{-\frac{1}{2}}{\bf{v}}\right)^T\left(D^{-\frac{1}{2}}{\bf{v}}\right)\right)\det\left(D^{\frac{1}{2}}\right)\\
&=(\frac{(r-1)!}{\prod_{k=1}^{r-1}\sqrt{(r-k)^2+r^2}})^2(1+\sum_{k=1}^{r-1}\frac{r^2}{(r-k)^2})\\
&=\frac{\sum_{k=1}^r(\frac{r!}{k})^2}{\prod_{k=1}^{r-1}(\sqrt{(r-k)^2+r^2})^2}\,. 
\end{align*}
Now,   by the Sherman-Morrison formula we have
\[
A_0^{-1}=(D+ff^T)^{-1}=D^{-1}-\frac{1}{1+f^TD^{-1}f} D^{-1}ff^TD^{-1}\,.
\]
As a result we obtain 
\[
[A_0^{-1}]_{ij}=\begin{cases}
-\frac{1}{\sum_{k=1}^r(\frac{1}{k})^2}\frac{\sqrt{(r-i)^2+r^2}\sqrt{(r-j)^2+r^2}}{(r-i)^2(r-j)^2}\qquad \qquad &\mbox{if } i \neq j\\
\frac{[\sum_{k=1}^r(\frac{1}{k})^2-\frac{1}{(r-i)^2}]}{\sum_{k=1}^r(\frac{1}{k})^2}\frac{[(r-i)^2+r^2]}{(r-i)^2} &\mbox{if } i = j\,. 
\end{cases}
\]
The determinant of  $A$ can be computed as follows.
\begin{eqnarray}
\det (A)&=&\left(\det A_0 \right)^3\left(1-B_1^TA_0^{-1}B_1-B_2^TA_0^{-1}B_2-B_3^TA_0^{-1}B_3\right)\nonumber\\
&=&\left(\det A_0 \right)^3\left(1-B^TA_0^{-1}B\right)\,,  
\label{e.7.15} 
\end{eqnarray} 
where
\[
B= -\begin{pmatrix}
\frac{1}{\sqrt{r}\sqrt{(r-1)^2+r^2}}\left(1+o\left(\frac{1}{\sqrt{a}}\right)\right)\\
\frac{2}{\sqrt{r}\sqrt{(r-2)^2+r^2}}\left(1+o\left(\frac{1}{\sqrt{a}}\right)\right)\\
\vdots\\
\frac{(r-1)}{\sqrt{r}\sqrt{(1)^2+r^2}}\left(1+o\left(\frac{1}{\sqrt{a}}\right)\right)
\end{pmatrix}\,.
\]
From the relation \eqref{e.7.15} of expressing  $\det(A)$ by  
$\det(A_0)$  and $B$ we have 
\begin{align}
\det (A) =\left(\det A_0 \right)^3\left(1-B^TA_0^{-1}B\right)=\frac{\left(\sum_{k=1}^r\frac{r!}{k}\right)^2\left(\sum_{k=1}^r\left(\frac{r!}{k}\right)^2\right)^2}{r\prod_{k=1}^{r-1}\left(\sqrt{(r-k)^2+r^2}\right)^6}\,. \label{7.11}
\end{align} 
Moreover, by the block Gaussian elimination method, we   find $A^{-1}$ as\\  
\begin{equation}
\begin{pmatrix}
A_0^{-1}\left(I_0+\frac{B_1B_1^TA_0^{-1}}{m}\right) & \frac{A_0^{-1}B_1B_2^TA_0^{-1}}{m}&\frac{A_0^{-1}B_1B_3^TA_0^{-1}}{m}& -\frac{A_0^{-1}B_1}{m}\\
\frac{A_0^{-1}B_2B_1^TA_0^{-1}}{m} & A_0^{-1}\left(I_0+\frac{B_2B_2^TA_0^{-1}}{m}\right)&\frac{A_0^{-1}B_2B_3^TA_0^{-1}}{m}&-\frac{A_0^{-1}B_2}{m}\\
 \frac{A_0^{-1}B_3B_1^TA_0^{-1}}{m}&\frac{A_0^{-1}B_3B_2^TA_0^{-1}}{m}&A_0^{-1}\left(I_0+\frac{B_3B_3^TA_0^{-1}}{m}\right)&-\frac{A_0^{-1}B_3}{m}\\
-\frac{B_1^TA_0^{-1}}{m}&-\frac{B_2^TA_0^{-1}}{m}&-\frac{B_3^TA_0^{-1}}{m}&\frac{1}{m}
\end{pmatrix}\,,
\label{e.7.18} 
\end{equation} 
where 
\[
m=1-B_1^TA_0^{-1}B_1-B_2^TA_0^{-1}B_2-B_3^TA_0^{-1}B_3 
\]
 and $I_0$ is the $(r-1) \times (r-1)$-dimensional identity matrix.

 \noindent\textbf{Step 5:   the limit of 
$f_1(y_1, y_2)$}.

 Combining  \eqref{e.7.18} and \eqref{e.7.15a},  
 one finds 
\begin{align}
 {\bf{b}}^TA^{-1}{\bf{b}}  =&\frac{r}{c}B^TA_0^{-1}B\Bigg((1-H_1)y_1^2+(1-H_2)y_2^2\nonumber\\
&\qquad +\frac{\left(\sqrt{H_1(1-H_1)}y_1+\sqrt{H_2(1-H_2)}y_2\right)^2}{H_3} 
\Bigg)\nonumber\\
 =&(\frac{r}{c}-1)\frac{y_1^2+y_2^2+2y_1y_2\sqrt{\frac{H_1H_2}{(1-H_1)(1-H_2)}}}{\frac{H_3}{(1-H_1)(1-H_2)}}\left(1+o\left(\frac{1}{a}\right)\right)\,.  \label{7.12}
\end{align}
Substituting the   expression 
\eqref{7.12} and the formula \eqref{7.11}  
for the determinant into   \eqref{7.10} yields 
\begin{align}
f_1(y_1, y_2)=\frac{c}{r}\exp\left\{\left(\frac{r}{c}-1\right)\frac{y_1^2+y_2^2+2y_1y_2\sqrt{\frac{H_1H_2}{(1-H_1)(1-H_2)}}}{2\frac{H_3}{(1-H_1)(1-H_2)}}\right\}\left(1+o\left(\frac{1}{a}\right)\right).\label{7.13}
\end{align}
  Combining \eqref{7.13} with the asymptotic    \eqref{7.6}  of $\mathcal{J}_1$ and the relationship $f(y_1,y_2)= \mathcal{J}_1 \times f_1(y_1, y_2)$, we   see $f(y_1, y_2)$ converges to the desired  Gaussian density, which completes the proof of part (i) of the theorem when $P \sim \GDP(a,r,H)$. 

 The proof of part (ii) of this theorem follows from the same argument as that in the proof of part (ii) of Theorem \ref{theorem 0.1}.
\end{proof}

\end{document}